\documentclass{article}
\usepackage{amsthm,amsfonts,amsbsy,amssymb,amsmath,graphicx}
\usepackage{mathrsfs}
\usepackage[all]{xy}

\DeclareMathAlphabet{\mathdutchcal}{U}{dutchcal}{m}{n}

\newtheorem{theorem}{Theorem}
\newtheorem*{theorem*}{Theorem}
\newtheorem{lemma}{Lemma}
\newtheorem{corollary}{Corollary}

\newtheorem{proposition}{Proposition}

\theoremstyle{definition}
\newtheorem{definition}{Definition}

\theoremstyle{remark}
\newtheorem{remark}{Remark}
\newtheorem{example}{Example}

\def\Z{{\mathbb Z}}

\def\R{{\mathbb R}}

\DeclareMathOperator{\sgn}{\mathrm{sgn}}

\title{Multicrossing complex of knot and secant classes}
\author{Igor Nikonov}
\date{}

\begin{document}

\maketitle

\begin{abstract}
    The multicrossing homology of a knot can be considered as a generalization of the homology of its fundamental quandle. We show that certain sums of knot secants define invariant classes in the multicrossing homology. The 2-secant class generalizes such invariants as linking number and quandle cocycle invariants.
\end{abstract}

\section{Introduction}

One of the simplest knot invariants is the linking number, which was originally defined by C.F. Gauss as a double integral. Today, the linking number is more often described as the sum of the signs of crossings between the components of the link. Many other, more sophisticated knot invariants can be described using a similar approach —- by summing certain values assigned to crossings over all crossings in the diagram. These include cocyclic quandle invariants and index polynomials of virtual knots. The author's work~\cite{Nca} demonstrated that all these invariants come from a certain class of the multicrossing homology of the knot.

The aim of this work is to introduce two other invariant classes of homology for secant knots, defined using $3$- and $4$-secants of links.

Since crossings in standard link diagrams can be described with $2$-secants, all three homology classes can be expressed by a single formula:
\begin{equation}\label{eq:secant_class}
\sigma_n(L)=\sum_{s\in\mathcal S^{adm}_n(L)}\sgn(s)\cdot s,\quad n=2,3,4,
\end{equation}
where $\mathcal S^{adm}_n(L)$ represents the set of admissible $n$-secants for the link $L$, and $\sgn(s)$ denotes the sign of the secant. The admissibility condition depends on $n$: for $n=2$, the set $\mathcal S^{adm}_2(L)$ consists of ``vertical'' $2$-secants parallel to a given direction $\vec\nu$; for $n=3$, ``horizontal'' $3$-secants perpendicular to the direction $\vec\nu$ are considered; and for $n=4$, the set $\mathcal S^{adm}_4(L)$ includes all possible $4$-secants of the link $L$.

Like vertical $2$-secants correspond to the crossings of the link diagram, horizontal $3$-secant and $4$-secants can be considered as ``crossings'' of some (non Reidemeister) diagram. Such diagrams arise as a result of applying a projector to a link in $\R^3$:
\begin{equation*}
\xymatrix{
*+[F]{\txt{knot}} \ar@{-}[r] & *+[F-:<12pt>]{\txt{projector}} \ar[r] & *+[F]{\txt{knot diagram}}
}
\end{equation*}
Besides the Reidemeister projector, in the paper we consider two other examples which use horizontal lines or all lines in $\R^3$ (Fig.~\ref{fig:projectors}). For the definitions see Section~\ref{subsect:projectors}.

\begin{figure}[h]
    \centering
    \includegraphics[width=0.8\linewidth]{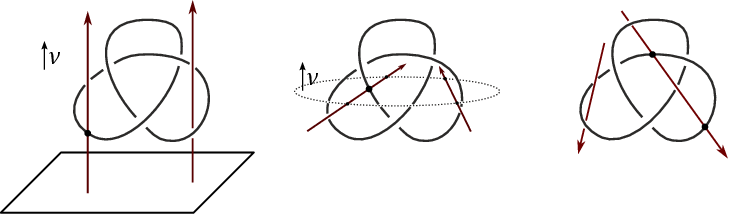}
    \caption{Reidemeister, spinner and omni projectors}
    \label{fig:projectors}
\end{figure}

Diagrams formed by horizontal secants were investagated by Fiedler and Kulin in~\cite{FK1,FK2,FK3}. The study of horizontal $3$-secants is closely connected to the theory of $G^k_n$ groups introduced by V.O. Manturov~\cite{KLM,Man15,FKMN,MN}.

Constructions and invariants related to $4$-secants are discussed in the review~\cite{D}. Budney, Conant, Shannel and Sinha~\cite{BCSS} show that counting of alternating quadrisecants yields the Vassiliev knot invariant of order $2$.

The paper is organized as follows. In Section~\ref{sect:secants} we defines signs of secants and give a geometric interpretation of the signs. Then we discuss three projectors of links in $\R^3$ and singular secants which can appear in generic isotopies of links. Section~\ref{sect:multicrossing_complex} introduces the multicrossing complex and defines the secant classes. In Section~\ref{subsect:component_cocycles} we consider invariants defined by component cocycles of secants and reprove results of Viro~\cite{V} that relate counting of quadrisecants with linking numbers of components of the link. Section~\ref{sect:proof} is devoted to the proof of the main theorem. By analogy with quandle colorings, in Section~\ref{sect:ml_invariants} we $(m,l)$-invariants of links and show they are stronger than the coloring polynomial defined by Eisermann~\cite{E}.

\section{Secants}\label{sect:secants}

\begin{definition}
    For $n\ge 0$, an \emph{$n$-secant} of a link $L$ is an oriented line that intersects the link in $n$ points. A secant is called \emph{transversal} if all the intersections are transversal.

    For a secant $s$, denote the secant with the reversed orientation by $-s$.

    We will say that $s$ is an $n$-secant on arcs $\alpha_1(t)\dots,\alpha_n(t)$ if there are non overlapping local parametrizations $\alpha_1(t)\dots,\alpha_n(t)$ of the link $L$ such that $s\cap L=\{\alpha_1(t_1),\dots,\alpha_n(t_n)\}$ for some  $t_1,\dots,t_n$, and the intersection points are enumerated in the order induced by the orientation of $s$. By default, we will assume that $t_1=\cdots =t_n=0$.

    Note that the reversed secant $-s$ is a secant on arcs $\alpha_n(t),\dots,\alpha_1(t)$.

    Denote the set of transversal $n$-secants of the link by $\mathcal S_n(L)$.
\begin{figure}[h]
    \centering
    \includegraphics[width=0.25\textwidth]{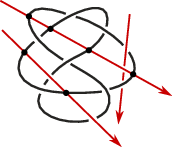}
    \caption{A $0$-secant, a $2$-secant and a $4$-secant }
    \label{fig:secants}
\end{figure}
\end{definition}

\subsection{Signs of secants}\label{subsect:secant_sign}

\subsubsection{Sign of $2$-secants}
For a pair of spatial curves $\alpha_1(t)$, $\alpha_2(t)$ denote
\begin{equation}\label{eq:delta2}
 \Delta_2(\alpha_1,\alpha_2)=(\dot\alpha_1(0),\dot\alpha_2(0),\alpha_1(0)-\alpha_2(0)).
\end{equation}

\begin{definition}\label{def:2secant_sign}
Let $s$ be a bisecant on arcs $\alpha_1(t)$ and $\alpha_2(t)$. The \emph{sign} of the bisecant $s$ is $\sgn(s)=\sgn\Delta_2(\alpha_1,\alpha_2)$.
\end{definition}

Below we will also use the notation $\sgn(\alpha_1,\alpha_2)$ for the sign of the bisecant on $\alpha_1(t)$ and $\alpha_2(t)$.

From the definition we have the following statements.
\begin{proposition}\label{prop:2secant_delta}
\begin{enumerate}
    \item $\Delta_2(\alpha_2,\alpha_1)=\Delta_2(\alpha_1,\alpha_2)$;
    \item $\Delta_2(-\alpha_1,\alpha_2)=-\Delta_2(\alpha_1,\alpha_2)$ where $(-\alpha_1)(t)=\alpha_1(-t)$.
\end{enumerate}
\end{proposition}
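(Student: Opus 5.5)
Both identities follow directly from the definition \eqref{eq:delta2}, where $\Delta_2$ is the mixed product (the $3\times 3$ determinant) of three vectors. So the plan is to use only two properties of the mixed product: multilinearity and antisymmetry in its arguments.

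For the first identity, write
\[
\Delta_2(\alpha_2,\alpha_1)=(\dot\alpha_2(0),\dot\alpha_1(0),\alpha_2(0)-\alpha_1(0)).
\]
Transposing the first two arguments changes the sign of the mixed product. Replacing $\alpha_2(0)-\alpha_1(0)$ by $-(\alpha_1(0)-\alpha_2(0))$ pulls out a second factor $-1$ by linearity in the third argument. The two signs cancel, so we obtain $(\dot\alpha_1(0),\dot\alpha_2(0),\alpha_1(0)-\alpha_2(0))=\Delta_2(\alpha_1,\alpha_2)$.

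For the second identity, the reparametrized curve $(-\alpha_1)(t)=\alpha_1(-t)$ has the same point at $t=0$, namely $(-\alpha_1)(0)=\alpha_1(0)$. By the chain rule its velocity is reversed: $\frac{d}{dt}(-\alpha_1)(0)=-\dot\alpha_1(0)$. Substituting into \eqref{eq:delta2}, only the first argument changes, and only by a factor $-1$. Linearity in the first argument then gives $\Delta_2(-\alpha_1,\alpha_2)=-\Delta_2(\alpha_1,\alpha_2)$.

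There is no real obstacle here. The only point needing care is the first identity: swapping the curves swaps both the tangent vectors and the order of subtraction in the difference vector, and one must notice that these two sign changes cancel. Geometrically, this says the sign of a bisecant does not depend on its orientation, since $-s$ is a secant on $\alpha_2,\alpha_1$.
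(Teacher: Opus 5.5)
Your proof is correct and matches the paper, which gives no written argument and simply says both identities follow from the definition~\eqref{eq:delta2}. Your use of antisymmetry and multilinearity of the mixed product, together with the chain rule for $t\mapsto\alpha_1(-t)$, is exactly that omitted verification.
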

\begin{corollary}\label{cor:2secant_inverse_sign}
    For a $2$-secant $s\in\mathcal S_2(L)$, $\sgn(-s)=\sgn(s)$.
\end{corollary}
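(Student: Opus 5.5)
The plan is to unwind the definition of the reversed secant and then apply the symmetry in Proposition~\ref{prop:2secant_delta}(1). Let $s\in\mathcal S_2(L)$ be a transversal $2$-secant on arcs $\alpha_1(t)$ and $\alpha_2(t)$, with $s\cap L=\{\alpha_1(0),\alpha_2(0)\}$, listed in the order given by the orientation of $s$. By Definition~\ref{def:2secant_sign}, $\sgn(s)=\sgn\Delta_2(\alpha_1,\alpha_2)$.

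As noted right after the definition of secants, $-s$ meets $L$ in the same two points, now listed in the opposite order. So $-s$ is a secant on the arcs $\alpha_2(t),\alpha_1(t)$. The local parametrizations are unchanged: they still follow the orientation of $L$ and are still centred at the same intersection points. Applying Definition~\ref{def:2secant_sign} to $-s$ gives $\sgn(-s)=\sgn\Delta_2(\alpha_2,\alpha_1)$.

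Proposition~\ref{prop:2secant_delta}(1) says $\Delta_2(\alpha_2,\alpha_1)=\Delta_2(\alpha_1,\alpha_2)$. Taking signs gives $\sgn(-s)=\sgn(s)$. For completeness, the proposition itself follows from two facts: swapping the first two columns of the determinant changes its sign, and replacing $\alpha_1(0)-\alpha_2(0)$ by its negative changes the sign again.

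The only point needing care is checking that reversing $s$ reorders the arcs but does not reverse their parametrizations. If the parametrizations were reversed, part (2) of the proposition would introduce extra sign changes. Here they are not reversed, because the arcs are local parametrizations of the oriented link $L$, and that orientation does not depend on the orientation of $s$. Transversality of $s$ makes $\Delta_2\neq0$, so both signs are defined.
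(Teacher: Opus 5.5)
Your argument is correct and matches the paper's (implicit) proof: $-s$ is the secant on the arcs $\alpha_2,\alpha_1$ with the same parametrizations, so the symmetry $\Delta_2(\alpha_2,\alpha_1)=\Delta_2(\alpha_1,\alpha_2)$ from Proposition~\ref{prop:2secant_delta}(1) gives the claim. One side remark is false but harmless: transversality does not force $\Delta_2\neq 0$ (transversal $2$-secants with zero sign occur, e.g.\ at second Reidemeister moves, which is why $n$-genericity requires nonzero signs as a separate condition), but you do not need it, since $\sgn 0=0$ and equal values of $\Delta_2$ already give equal signs.
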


\subsubsection{Sign of horizontal $3$-secants}

Fix a unit vector $\nu$ in $\R^3$.

Let $\alpha_1(t)$, $\alpha_2(t)$, $\alpha_3(t)$ be spatial curves such that the points $\alpha_1(0), \alpha_2(0), \alpha_3(0)$ are collinear. Denote
\begin{multline}\label{eq:delta3}
\Delta_3(\alpha_1,\alpha_2,\alpha_3)= (\dot\alpha_1(0),\nu)(\dot\alpha_2(0),\nu)(\dot\alpha_3(0),\alpha_1(0)-\alpha_2(0),\nu)+\\
    (\dot\alpha_1(0),\nu)(\dot\alpha_3(0),\nu)(\dot\alpha_2(0),\alpha_3(0)-\alpha_1(0),\nu)+\\
    (\dot\alpha_2(0),\nu)(\dot\alpha_3(0),\nu)(\dot\alpha_1(0),\alpha_2(0)-\alpha_3(0),\nu).
\end{multline}

\begin{definition}\label{def:3secant_sign}
Let $s$ be a trisecant on arcs $\alpha_1(t)$, $\alpha_2(t)$ and $\alpha_3(t)$. The \emph{sign} of the trisecant $s$ is $\sgn(s)=\sgn\Delta_3(\alpha_1,\alpha_2,\alpha_3)$.
\end{definition}

The statements below immediately follow from the definition.
\begin{proposition}\label{prop:3secant_delta}
\begin{enumerate}
    \item For any permutation $\tau\in\Sigma_3$,
\[
\Delta_3(\alpha_{\tau(1)},\alpha_{\tau(2)},\alpha_{\tau(3)})=(-1)^\tau\Delta_3(\alpha_1,\alpha_2,\alpha_3);
\]
    \item $\Delta_3(-\alpha_1,\alpha_2,\alpha_3)=-\Delta_3(\alpha_1,\alpha_2,\alpha_3)$.

\end{enumerate}
\end{proposition}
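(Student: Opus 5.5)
Both items follow from direct inspection of formula~\eqref{eq:delta3}, using only multilinearity and antisymmetry of the triple product $(a,b,c)$ and bilinearity of the scalar product $(\cdot,\nu)$. Write $v_i=\dot\alpha_i(0)$, $p_i=\alpha_i(0)$, and $c_i=(v_i,\nu)$. The three summands of $\Delta_3$ are then
\[
T_3=c_1c_2\,(v_3,p_1-p_2,\nu),\quad T_2=c_1c_3\,(v_2,p_3-p_1,\nu),\quad T_1=c_2c_3\,(v_1,p_2-p_3,\nu).
\]
So $T_k$ is indexed by the omitted index $k$: the curve $k$ enters only through its velocity in the triple product, and the other two enter through the scalar factors and through the cyclically ordered difference of their points.

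For item~1 it suffices to check the two transpositions $(12)$ and $(23)$, since they generate $\Sigma_3$ and $(-1)^\tau$ is multiplicative. Under $(12)$ (swap $\alpha_1\leftrightarrow\alpha_2$):
\begin{itemize}
\item $T_3$ becomes $c_2c_1(v_3,p_2-p_1,\nu)=-T_3$, because the difference reverses;
\item the term with coefficient $c_1c_3$ becomes $c_2c_3(v_1,p_3-p_2,\nu)=-T_1$;
\item the term with coefficient $c_2c_3$ becomes $c_1c_3(v_2,p_1-p_3,\nu)=-T_2$.
\end{itemize}
Hence the sum changes sign. The check for $(23)$ is the same computation with the indices relabeled, because the formula is visibly invariant under cyclic relabeling $1\to2\to3\to1$. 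Indeed, $T_3\mapsto c_2c_3(v_1,p_2-p_3,\nu)=T_1$, and similarly for the other two terms. So $\Delta_3$ is cyclically invariant and odd under one transposition, which gives the full sign rule $(-1)^\tau$.

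For item~2, replacing $\alpha_1$ by $-\alpha_1$ keeps $p_1$ and replaces $v_1$ by $-v_1$, hence $c_1$ by $-c_1$. Each summand is linear in $v_1$: $T_3$ and $T_2$ contain $v_1$ only through the factor $c_1$, and $T_1$ contains it only as the first argument of the triple product. Each summand therefore changes sign, and so does $\Delta_3$.

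There is no real obstacle. The only point needing care is the bookkeeping for the transposition, namely verifying that the terms are permuted among themselves with a minus sign rather than mapped onto something new; tracking terms by the omitted index makes this clear. The collinearity hypothesis is not used; it matters only for the geometric meaning of $\Delta_3$.
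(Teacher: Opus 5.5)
Your proof is correct and is the same direct check of formula~\eqref{eq:delta3} that the paper relies on when it says the statement follows immediately from the definition. Organizing item~1 as cyclic invariance plus antisymmetry under $(12)$, and item~2 as linearity of each summand in $\dot\alpha_1(0)$, is a clean way to carry it out.
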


\begin{corollary}\label{cor:3secant_inverse_sign}
    For a $3$-secant $s\in\mathcal S_3(L)$, $\sgn(-s)=-\sgn(s)$.
\end{corollary}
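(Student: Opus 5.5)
The reversed secant $-s$ passes through the same three points of $L$, but it meets them in the opposite order. So if $s$ is a trisecant on arcs $\alpha_1,\alpha_2,\alpha_3$, then, as noted after the definition of secants, $-s$ is a trisecant on arcs $\alpha_3,\alpha_2,\alpha_1$. Definition~\ref{def:3secant_sign} then gives
\[
\sgn(-s)=\sgn\Delta_3(\alpha_3,\alpha_2,\alpha_1).
\]
The plan is to compare this directly with $\Delta_3(\alpha_1,\alpha_2,\alpha_3)$ using Proposition~\ref{prop:3secant_delta}.

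The reordering $(\alpha_1,\alpha_2,\alpha_3)\mapsto(\alpha_3,\alpha_2,\alpha_1)$ is the transposition $\tau=(1\,3)\in\Sigma_3$, and $(-1)^\tau=-1$. Part~1 of Proposition~\ref{prop:3secant_delta} therefore yields
\[
\Delta_3(\alpha_3,\alpha_2,\alpha_1)=-\Delta_3(\alpha_1,\alpha_2,\alpha_3).
\]
This step needs no change of parametrization. Each arc $\alpha_i$ keeps the orientation induced by the link, so the parametrizations of the arcs are exactly the ones used for $s$; only their enumeration changes. In particular, part~2 of Proposition~\ref{prop:3secant_delta} (reversal of an arc) is not needed.

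Taking signs gives $\sgn(-s)=-\sgn(s)$. Since $s$ is transversal and its sign is assumed defined, $\Delta_3\neq0$, and the sign function is odd on nonzero reals.

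The only point to check carefully is that $\sgn(s)$ is well defined, that is, independent of the choice of local parametrizations. The definition is set up with the arcs enumerated along $s$ and oriented by $L$, so this is implicit. If one wanted to verify part~1 of Proposition~\ref{prop:3secant_delta} by hand for $\tau=(1\,3)$, swapping $\alpha_1$ and $\alpha_3$ permutes the three summands of~\eqref{eq:delta3}: the first and third summands exchange, each with its displacement vector negated, and the second summand keeps its position with its displacement vector negated. Every summand thus changes sign, by antisymmetry of the triple product in its middle argument.
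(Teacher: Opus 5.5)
Your argument is correct and is exactly the one the paper intends: $-s$ is a trisecant on $\alpha_3,\alpha_2,\alpha_1$, and part~1 of Proposition~\ref{prop:3secant_delta} applied to the odd transposition $(1\,3)$ gives $\Delta_3(\alpha_3,\alpha_2,\alpha_1)=-\Delta_3(\alpha_1,\alpha_2,\alpha_3)$, just as in the written proof of Corollary~\ref{cor:4secant_inverse_sign}. One harmless slip: transversality does not force $\Delta_3\neq 0$ (the paper explicitly allows zero transversal trisecants), but you do not need it, since $\sgn(-x)=-\sgn(x)$ holds for every real $x$, including $x=0$.
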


\begin{proposition}[Geometric meaning of the trisecant sign]\label{prop:3secant_sign_geometry}
Assume that $\nu$ is the unit vector of the axis $Oz$ and $s$ is a horizontal trisecant on arcs $\alpha_1(t)$, $\alpha_2(t)$ and $\alpha_3(t)$ in a plane $z=z_0$. Assume also that $(\dot\alpha_i,\nu)>0$, $i=1,2,3$.
\begin{enumerate}
    \item Let $\sgn(s)\ne 0$.  Then for a small $\epsilon>0$, the sign $\sgn(s)$ coincides with the orientation of the triangle $A_1A_2A_3$ where $A_i=\alpha_i\cap\{z=z_0+\epsilon\}$ (see Fig.~\ref{fig:positive_trisecant}).
\begin{figure}
    \centering
    \includegraphics[width=0.4\linewidth]{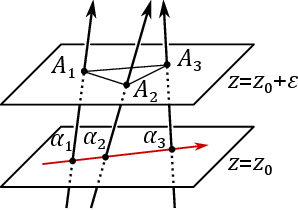}
    \caption{A positive trisecant}
    \label{fig:positive_trisecant}
\end{figure}

\item Denote the surface formed by horizontal $2$-secants on $\alpha_1(t)$ and $\alpha_3(t)$ by $B(\alpha_1,\alpha_3)$. Then $\sgn(s)=0$ if and only if $\alpha_2(t)$ is tangent to $B(\alpha_1,\alpha_3)$ at $t=0$.
\end{enumerate}

\end{proposition}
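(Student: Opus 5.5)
We argue by direct computation in coordinates. Take $\nu=(0,0,1)$ and write $v_i=\dot\alpha_i(0)$, $c_i=(v_i,\nu)>0$, and $P_i=\alpha_i(0)$, all lying in the plane $z=z_0$. For a vector $w$ let $\bar w$ denote its projection to the $xy$-plane, and for $a,b\in\R^2$ let $[a,b]$ denote the $2\times2$ determinant. Since $\nu$ is vertical, $(a,b,\nu)=[\bar a,\bar b]$ for any $a,b\in\R^3$. Hence
\[
\Delta_3=c_1c_2[\bar v_3,P_1-P_2]+c_1c_3[\bar v_2,P_3-P_1]+c_2c_3[\bar v_1,P_2-P_3],
\]
where we regard the $P_i$ as points of $\R^2$. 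Dividing by $c_1c_2c_3>0$ and setting $u_i=\bar v_i/c_i$, we obtain
\[
\frac{\Delta_3}{c_1c_2c_3}=[u_3,P_1-P_2]+[u_2,P_3-P_1]+[u_1,P_2-P_3].
\]
Here $u_i$ is the horizontal velocity of $\alpha_i$ when it is reparametrized by height. This is possible because $c_i>0$.

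\emph{Part 1.} Up to first order in $\epsilon$ we have $A_i=P_i+\epsilon u_i+O(\epsilon^2)$ in the plane $z=z_0+\epsilon$. The orientation of the triangle $A_1A_2A_3$ is the sign of
\[
D(\epsilon)=[A_2-A_1,A_3-A_1].
\]
Since $P_1,P_2,P_3$ lie on the line $s$, $D(0)=0$. Expanding $D$ and collecting the linear term, we get
\[
D'(0)=[u_2-u_1,P_3-P_1]+[P_2-P_1,u_3-u_1].
\]
Rearranging with the antisymmetry of $[\cdot,\cdot]$, the $u_1$ terms combine as $[u_1,(P_3-P_1)-(P_2-P_1)]=[u_1,P_3-P_2]$. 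The whole expression becomes
\[
[u_2,P_3-P_1]+[u_3,P_1-P_2]-[u_1,P_3-P_2]=[u_2,P_3-P_1]+[u_3,P_1-P_2]+[u_1,P_2-P_3].
\]
This is exactly $\Delta_3/(c_1c_2c_3)$. Hence $D(\epsilon)=\epsilon\,\Delta_3/(c_1c_2c_3)+O(\epsilon^2)$, and for small $\epsilon>0$ the sign of $D(\epsilon)$ equals $\sgn(s)$ whenever $\sgn(s)\neq0$. The only care needed is the sign bookkeeping in this expansion. We should also fix the orientation convention of the horizontal plane (viewed from $+\nu$) so that it matches Fig.~\ref{fig:positive_trisecant}.

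\emph{Part 2.} Parametrize $B(\alpha_1,\alpha_3)$ near $s$ by height $h$ and an affine parameter $\lambda$:
\[
\Phi(h,\lambda)=(1-\lambda)\tilde\alpha_1(h)+\lambda\tilde\alpha_3(h),
\]
where $\tilde\alpha_i$ is $\alpha_i$ reparametrized by $z$. The point $P_2$ is $\Phi(0,\lambda_0)$ for some $\lambda_0$ with $P_2=(1-\lambda_0)P_1+\lambda_0P_3$. The surface is regular there because $\partial_\lambda\Phi=P_3-P_1\neq0$ and $\partial_h\Phi$ has $z$-component $1$. Its tangent plane is spanned by $\partial_\lambda\Phi=P_3-P_1$ (horizontal) and $\partial_h\Phi=(1-\lambda_0)(u_1,1)+\lambda_0(u_3,1)$.

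The curve $\alpha_2$ is tangent to $B$ iff $(u_2,1)$ lies in this plane. Since $P_3-P_1$ is horizontal and both vectors have $z$-component $1$, this holds iff
\[
[u_2-(1-\lambda_0)u_1-\lambda_0u_3,\;P_3-P_1]=0.
\]
Substituting $P_3-P_2=(1-\lambda_0)(P_3-P_1)$ and $P_1-P_2=-\lambda_0(P_3-P_1)$ into $\Delta_3/(c_1c_2c_3)$ gives
\[
\frac{\Delta_3}{c_1c_2c_3}=[u_2,P_3-P_1]-\lambda_0[u_3,P_3-P_1]-(1-\lambda_0)[u_1,P_3-P_1].
\]
This is exactly the tangency determinant above. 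Therefore $\sgn(s)=0$ iff $\alpha_2$ is tangent to $B(\alpha_1,\alpha_3)$ at $t=0$.
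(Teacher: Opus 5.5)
Your proof is correct and follows essentially the same route as the paper. Part 1 linearizes the height-reparametrized arcs and extracts the first-order term of $(A_2-A_1,A_3-A_1,\nu)$, and Part 2 uses the same ruled parametrization $(1-\lambda)\alpha_1(z)+\lambda\alpha_3(z)$ of $B(\alpha_1,\alpha_3)$ and identifies its tangency determinant with $\Delta_3/(c_1c_2c_3)$. Your reduction to planar $2\times 2$ determinants is a tidier version of the paper's computation in the frame $\kappa,\ \nu\times\kappa,\ \nu$; it uses that $\partial_\lambda\Phi$ is horizontal while height-normalized tangents have vertical component $1$, and so avoids the paper's closing appeal to multilinearity in the $\dot\alpha_i$.
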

\begin{proof}
    1. Since $\epsilon$ is small, we linearize the arcs $\alpha_i$. Then $A_i=\alpha_i(0)+\frac{\epsilon}{(\dot\alpha_i(0),\nu)}\dot\alpha_i(0)$. Below we omit the argument $0$. The orientation of the triangle $A_1A_2A_3$ is the sign of the scalar triple product
\begin{multline*}
    (A_2-A_1,A_3-A_1,\nu)=(\alpha_2-\alpha_1+\frac{\epsilon\dot\alpha_2}{(\dot\alpha_2,\nu)}-\frac{\epsilon\dot\alpha_1}{(\dot\alpha_1,\nu)},\alpha_3-\alpha_1+\frac{\epsilon\dot\alpha_3}{(\dot\alpha_3,\nu)}-\frac{\epsilon\dot\alpha_1}{(\dot\alpha_1,\nu)},\nu).
\end{multline*}
We neglect the terms with the multiplier $\epsilon^2$ and use the fact $\alpha_2-\alpha_1\parallel\alpha_3-\alpha_2$, hence, $(\alpha_2-\alpha_1,\alpha_3-\alpha_2,\nu)=0$. Thus, we look for the sign of the expression

\begin{multline*}
    (\alpha_2-\alpha_1,\frac{\epsilon\dot\alpha_3}{(\dot\alpha_3,\nu)}-\frac{\epsilon\dot\alpha_1}{(\dot\alpha_1,\nu)},\nu)+
     (\frac{\epsilon\dot\alpha_2}{(\dot\alpha_2,\nu)}-\frac{\epsilon\dot\alpha_1}{(\dot\alpha_1,\nu)},\alpha_3-\alpha_1,\nu)=\\
     \frac{\epsilon}{(\dot\alpha_1,\nu)(\dot\alpha_2,\nu)(\dot\alpha_3,\nu)}\Delta_3(\alpha_1,\alpha_2,\alpha_3).
\end{multline*}
Then the orientation of $A_1A_2A_3$ is $\sgn\Delta_3(\alpha_1,\alpha_2,\alpha_3)=\sgn(s)$.

2. Assume that $(\dot\alpha_i,\nu)=1$, $i=1,2,3$. Then we can suppose that the parameter $t$ coincides with the coordinate $z$. The surface $B(\alpha_1,\alpha_3)$ has a parametrization
\[
r(z,w)=(1-w)\alpha_1(z)+w\alpha_3(z).
\]
The curve $\alpha_2(t)$ intersects the surface in $r(w_0,z_0)$ where $w_0=\frac{|\alpha_2-\alpha_1|}{|\alpha_3-\alpha_1|}$. The tangent space to $B(\alpha_1,\alpha_3)$ is spanned by the vectors
\[
\frac{\partial r}{\partial z}=\frac{|\alpha_3-\alpha_2|\dot\alpha_1+|\alpha_2-\alpha_1|\dot\alpha_3}{|\alpha_3-\alpha_1|}, \quad
\frac{\partial r}{\partial w}=\alpha_3-\alpha_1.
\]
Then the tangency condition is $(\frac{\partial r}{\partial z},\frac{\partial r}{\partial w},\dot\alpha_2)=0$.

Let $\kappa=\frac{\alpha_3-\alpha_1}{|\alpha_3-\alpha_1|}$ be the direction vector of the trisecant, and $\lambda=\nu\times\kappa$. Then $\kappa,\lambda,\nu$ is a positive orthogonal basis.

Since  the points $\alpha_i$ are collinear, $\alpha_i-\alpha_j=|\alpha_i-\alpha_j|\kappa$, $i>j$. Then
\begin{equation*}
   (\frac{\partial r}{\partial z},\frac{\partial r}{\partial w},\dot\alpha_2)=
   (\dot\alpha_1,\alpha_3-\alpha_2,\dot\alpha_2)+(\dot\alpha_3,\alpha_2-\alpha_1,\dot\alpha_2).
\end{equation*}
Let us show that $(\frac{\partial r}{\partial z},\frac{\partial r}{\partial w},\dot\alpha_2)=\Delta_3(\alpha_1,\alpha_2,\alpha_3)$. By assumption,
\[
\dot\alpha_2=a_1\kappa+a_2\lambda+\nu,\quad a_1,a_2\in\R.
\]
We have
\[
(\dot\alpha_1,\alpha_3-\alpha_2,a_2\lambda)=a_2|\alpha_3-\alpha_2|(\dot\alpha_1,\kappa,\lambda)=a_2|\alpha_3-\alpha_2|(\dot\alpha_1,\nu)=a_2|\alpha_3-\alpha_2|
\]
and $(\dot\alpha_3,\alpha_2-\alpha_1,a_2\lambda)=a_2|\alpha_2-\alpha_1|$. On the other hand,
\[
(\dot\alpha_2,\alpha_1-\alpha_3,\nu)=-a_2|\alpha_3-\alpha_1|(\lambda,\kappa,\nu)=a_2|\alpha_3-\alpha_1|.
\]
Then
\begin{multline*}
    (\frac{\partial r}{\partial z},\frac{\partial r}{\partial w},\dot\alpha_2)=(\dot\alpha_1,\alpha_3-\alpha_2,c_2\lambda)+(\dot\alpha_1,\alpha_3-\alpha_2,\nu)+(\dot\alpha_3,\alpha_2-\alpha_1,c_2\lambda)+\\(\dot\alpha_3,\alpha_2-\alpha_1,\nu)=
    (\dot\alpha_1,\alpha_3-\alpha_2,\nu)+(\dot\alpha_3,\alpha_2-\alpha_1,\nu)+
    (\dot\alpha_2,\alpha_1-\alpha_3,\nu)=\\ \Delta_3(\alpha_1,\alpha_2,\alpha_3).
\end{multline*}
The general case follows from the multi-linearity of $\Delta_3$ in $\dot\alpha_i$.
\end{proof}

\begin{remark}
    From the proof of Proposition~\ref{prop:3secant_sign_geometry} we get another formula for $\Delta_3$:
\begin{equation}
    \Delta_3(\alpha_1,\alpha_2,\alpha_3)=(\dot\alpha_3,\nu)(\dot\alpha_1,\alpha_3-\alpha_2,\dot\alpha_2)+(\dot\alpha_1,\nu)(\dot\alpha_3,\alpha_2-\alpha_1,\dot\alpha_2).
\end{equation}
\end{remark}

\subsubsection{Signs of $4$-secants}

Let $\alpha_1(t)$, $\alpha_2(t)$, $\alpha_3(t)$, $\alpha_4(t)$ be spatial curves such that the points $\alpha_1(0)$, $\alpha_2(0)$, $\alpha_3(0)$, $\alpha_4(0)$ are collinear. Denote
\begin{multline}\label{eq:delta4}
\Delta_4(\alpha_1,\alpha_2,\alpha_3,\alpha_4)=\\
\left|\begin{array}{cc}
       (\dot\alpha_1(0),\dot\alpha_3(0),\alpha_2(0)-\alpha_3(0))  & (\dot\alpha_1(0),\dot\alpha_4(0),\alpha_2(0)-\alpha_4(0)) \\
        (\dot\alpha_2(0),\dot\alpha_3(0),\alpha_1(0)-\alpha_3(0)) & (\dot\alpha_2(0),\dot\alpha_4(0),\alpha_1(0)-\alpha_4(0))
    \end{array}\right|.
\end{multline}

\begin{definition}\label{def:4secant_sign}
Let $s$ be a quadrisecant on arcs $\alpha_1(t)$, $\alpha_2(t)$, $\alpha_3(t)$, and $\alpha_4(t)$. The \emph{sign} of the quadrisecant $s$ is $\sgn(s)=\sgn\Delta_4(\alpha_1,\alpha_2,\alpha_3,\alpha_4)$.
\end{definition}

\begin{proposition}\label{prop:4secant_delta}
\begin{enumerate}
    \item For any permutation $\tau\in\Sigma_4$,
\[
\Delta_4(\alpha_{\tau(1)},\alpha_{\tau(2)},\alpha_{\tau(3)},\alpha_{\tau(4)})=(-1)^\tau\Delta_4(\alpha_1,\alpha_2,\alpha_3,\alpha_4);
\]
    \item $\Delta_4(-\alpha_1,\alpha_2,\alpha_3,\alpha_4)=-\Delta_4(\alpha_1,\alpha_2,\alpha_3,\alpha_4)$.
\end{enumerate}
\end{proposition}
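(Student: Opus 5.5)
The plan is to reduce both parts to elementary properties of the $2\times2$ determinant in \eqref{eq:delta4}, using collinearity only where it is actually needed.

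\emph{Part 2.} Replacing $\alpha_1$ by $-\alpha_1$ keeps the point $\alpha_1(0)$ and changes $\dot\alpha_1(0)$ to $-\dot\alpha_1(0)$. In \eqref{eq:delta4} the vector $\dot\alpha_1(0)$ appears only in the first row. It enters each entry there linearly, as an argument of a scalar triple product. The point $\alpha_1(0)$, which appears in the second row, does not change. Hence the first row is multiplied by $-1$, and so is the determinant.

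\emph{Part 1.} Since $\Sigma_4$ is generated by the transpositions $(12)$, $(23)$, $(34)$, it suffices to check that each of them changes the sign of $\Delta_4$.

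For $(12)$, substituting $\alpha_1\leftrightarrow\alpha_2$ turns the first row into $\big((\dot\alpha_2,\dot\alpha_3,\alpha_1-\alpha_3),(\dot\alpha_2,\dot\alpha_4,\alpha_1-\alpha_4)\big)$, which is exactly the old second row, and vice versa. So the rows are swapped and the determinant changes sign.

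For $(34)$, the substitution $\alpha_3\leftrightarrow\alpha_4$ swaps the two columns, which again changes the sign.

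The transposition $(23)$ mixes rows and columns, and this is the main step. Here I will use collinearity. Write $\alpha_i(0)=p+x_i\kappa$ for a unit vector $\kappa$ along the secant, and set $a_{ij}=(\dot\alpha_i(0),\dot\alpha_j(0),\kappa)$. Then $a_{ij}=-a_{ji}$, and the entries become $(x_2-x_3)a_{13}$, $(x_2-x_4)a_{14}$, $(x_1-x_3)a_{23}$, $(x_1-x_4)a_{24}$. Writing $x_{ij}=x_i-x_j$, this gives
\[
\Delta_4=x_{23}x_{14}\,a_{13}a_{24}-x_{24}x_{13}\,a_{14}a_{23}.
\]

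The numbers $a_{ij}$ are the $2\times2$ determinants of the projections of $\dot\alpha_i(0)$ onto the plane $\kappa^\perp$. They therefore satisfy the Plücker relation
\[
a_{12}a_{34}-a_{13}a_{24}+a_{14}a_{23}=0.
\]
The differences $x_{ij}$ satisfy the analogous scalar identity
\[
x_{12}x_{34}-x_{13}x_{24}+x_{14}x_{23}=0.
\]

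After the swap $2\leftrightarrow3$, the determinant becomes
\[
\Delta_4'=x_{32}x_{14}\,a_{12}a_{34}-x_{34}x_{12}\,a_{14}a_{32}.
\]
To compare it with $\Delta_4$, I will eliminate $a_{12}a_{34}$ using the Plücker relation and $x_{12}x_{34}$ using the scalar identity, expanding both sides in the remaining monomials. The expected outcome is $\Delta_4'=-\Delta_4$.

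This bookkeeping is the only real obstacle. If the elimination becomes messy, the fallback is a direct check: both expressions are bilinear in the pair of products $a_{13}a_{24}$, $a_{14}a_{23}$ and in the $x$-monomials, so it is enough to compare coefficients after the substitution.
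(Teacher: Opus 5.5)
Your proposal is correct and is essentially the paper's argument: rewriting $\Delta_4=x_{23}x_{14}a_{13}a_{24}-x_{24}x_{13}a_{14}a_{23}$ via collinearity and using the two three-term relations is exactly what the paper does. The only difference is the generating set: you use $(12),(23),(34)$, handling $(12)$ and $(34)$ as row and column swaps that need no collinearity, while the paper checks $(12)$ and the $4$-cycle $(1234)$. The elimination you postponed is immediate: with $P_1=a_{12}a_{34}$, $P_2=a_{13}a_{24}$, $P_3=a_{14}a_{23}$ and $Q_1=x_{12}x_{34}$, $Q_2=x_{13}x_{24}$, $Q_3=x_{14}x_{23}$, you have $\Delta_4=Q_3P_2-Q_2P_3$ and $\Delta_4'=-Q_3P_1+Q_1P_3$, and substituting $P_1=P_2-P_3$, $Q_1=Q_2-Q_3$ gives $\Delta_4'=-\Delta_4$.
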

\begin{proof}
    Let $\kappa$ be a direction vector of the line containing the points $\alpha_i(0)$. Denote
\begin{gather*}
    p_1=(\kappa,\dot\alpha_1,\dot\alpha_2)(\kappa,\dot\alpha_3,\dot\alpha_4),\quad q_1=(\alpha_1-\alpha_2,\alpha_3-\alpha_4),\\
    p_2=(\kappa,\dot\alpha_1,\dot\alpha_3)(\kappa,\dot\alpha_2,\dot\alpha_4),\quad q_2=(\alpha_1-\alpha_3,\alpha_2-\alpha_4),\\
    p_3=(\kappa,\dot\alpha_1,\dot\alpha_4)(\kappa,\dot\alpha_2,\dot\alpha_3),\quad q_3=(\alpha_1-\alpha_4,\alpha_2-\alpha_3).
\end{gather*}
Then $q_1-q_2+q_3=0$. Let us show that $p_1-p_2+p_3=0$.
%
First, assume that $\kappa,\dot\alpha_1, \dot\alpha_2$ are independent. Then
\[
\dot\alpha_3=a_1\dot\alpha_1+a_2\dot\alpha_2+a_3\kappa,\quad
\dot\alpha_4=b_1\dot\alpha_1+b_2\dot\alpha_2+b_3\kappa
\]
for some $a_1,a_2,a_3,b_1,b_2,b_3\in\R^3$. Hence,
\[
p_1=(a_1b_2-a_2b_1)(\kappa,\dot\alpha_1,\dot\alpha_2)^2,\
p_2=-a_2b_1(\kappa,\dot\alpha_1,\dot\alpha_2)^2,\
p_3=-a_1b_2(\kappa,\dot\alpha_1,\dot\alpha_2)^2,
\]
and $p_1-p_2+p_3=0$. The general case follows from continuity.

Since points $\alpha_i$ lie on one affine line, $\alpha_i-\alpha_j=(z_i-z_j)\kappa$ for some $z_i\in\R$, $i=1,2,3,4$. Then
\begin{multline*}
\Delta_4(\alpha_1,\alpha_2,\alpha_3,\alpha_4)=(\dot\alpha_1,\dot\alpha_3,\alpha_2-\alpha_3)(\dot\alpha_2,\dot\alpha_4,\alpha_1-\alpha_4)-\\(\dot\alpha_1,\dot\alpha_4,\alpha_2-\alpha_4)(\dot\alpha_2,\dot\alpha_3,\alpha_1-\alpha_3)= (z_1-z_4)(z_2-z_3)(\dot\alpha_1,\dot\alpha_3,\kappa)(\dot\alpha_2,\dot\alpha_4,\kappa)-\\ (z_1-z_3)(z_2-z_4)(\dot\alpha_1,\dot\alpha_4,\kappa)(\dot\alpha_2,\dot\alpha_3,\kappa)=p_2q_3-p_3q_2.
\end{multline*}

Let $\alpha'_1=\alpha_2$, $\alpha'_2=\alpha_1$, $\alpha'_3=\alpha_3$, $\alpha'_4=\alpha_4$. Then
\begin{gather*}
    p'_1=-p_1,\quad p'_2=p_3,\quad p'_3=p_2,\\
    q'_1=-q_1,\quad q'_2=q_3,\quad q'_3=q_2,
\end{gather*}
and
\begin{multline*}
\Delta_4(\alpha_2,\alpha_1,\alpha_3,\alpha_4)=\Delta_4(\alpha'_1,\alpha'_2,\alpha'_3,\alpha'_4)=p'_2q'_3-p'_3q'_2=-(p_2q_3-p_3q_2)=\\ -\Delta_4(\alpha_1,\alpha_2,\alpha_3,\alpha_4).
\end{multline*}

Let $\alpha'_1=\alpha_4$, $\alpha'_2=\alpha_1$, $\alpha'_3=\alpha_2$, $\alpha'_4=\alpha_3$. Then
\begin{gather*}
    p'_1=-p_3,\quad p'_2=-p_2,\quad p'_3=-p_1=p_3-p_2,\\
    q'_1=-q_3,\quad q'_2=-q_2,\quad q'_3=-q_1=q_3-q_2,
\end{gather*}
and
\begin{multline*}
\Delta_4(\alpha_4,\alpha_1,\alpha_2,\alpha_3)=\Delta_4(\alpha'_1,\alpha'_2,\alpha'_3,\alpha'_4)=p'_2q'_3-p'_3q'_2=\\  
-(p_2q_3-p_3q_2)= -\Delta_4(\alpha_1,\alpha_2,\alpha_3,\alpha_4).
\end{multline*}
Since the cycles $(12)$ and $(1234)$ generate the permutation group, we get the first statement.

The second statement is clear.
\end{proof}

\begin{corollary}\label{cor:4secant_inverse_sign}
    For a $4$-secant $s\in\mathcal S_4(L)$, $\sgn(-s)=\sgn(s)$.
\end{corollary}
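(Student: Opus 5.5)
The corollary follows from Proposition~\ref{prop:4secant_delta} by writing $-s$ in terms of the same arcs and tracking the sign of $\Delta_4$.

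Let $s$ be a quadrisecant on arcs $\alpha_1(t),\alpha_2(t),\alpha_3(t),\alpha_4(t)$. As noted after the definition of secants, the reversed secant $-s$ is a secant on the arcs $\alpha_4(t),\alpha_3(t),\alpha_2(t),\alpha_1(t)$. The parametrizations of the arcs do not change, since the link orientation is unchanged; only the order along the line is reversed. Therefore
\[
\sgn(-s)=\sgn\Delta_4(\alpha_4,\alpha_3,\alpha_2,\alpha_1).
\]

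The permutation $\tau=(14)(23)$ reverses the order of four elements. It is a product of two transpositions, hence even, so $(-1)^\tau=1$. Part~1 of Proposition~\ref{prop:4secant_delta} then gives
\[
\Delta_4(\alpha_4,\alpha_3,\alpha_2,\alpha_1)=\Delta_4(\alpha_1,\alpha_2,\alpha_3,\alpha_4),
\]
and taking signs yields $\sgn(-s)=\sgn(s)$.

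The only point needing care is that $\Delta_4$ is evaluated on the same four collinear points with the same tangent vectors, so Proposition~\ref{prop:4secant_delta} applies verbatim. The direction vector $\kappa$ used in its proof may be replaced by $-\kappa$, but $\Delta_4$ as defined by~\eqref{eq:delta4} does not involve $\kappa$ at all. Transversality of $s$ is not needed for the identity, since it is purely algebraic.
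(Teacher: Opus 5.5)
Your proof is correct and matches the paper's: $-s$ is the quadrisecant on $\alpha_4,\alpha_3,\alpha_2,\alpha_1$. The reversal $(14)(23)$ is even, so part~1 of Proposition~\ref{prop:4secant_delta} gives $\Delta_4(\alpha_4,\alpha_3,\alpha_2,\alpha_1)=\Delta_4(\alpha_1,\alpha_2,\alpha_3,\alpha_4)$; you just state the parity of the reversal explicitly, which the paper leaves implicit.
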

\begin{proof}
    Let $s$ be a quadrisecants on arcs $\alpha_1(t)$, $\alpha_2(t)$, $\alpha_3(t)$, $\alpha_4(t)$. Then $-s$ is the quadrisecant on the arcs $\alpha_4(t)$, $\alpha_3(t)$, $\alpha_2(t)$, $\alpha_1(t)$, and
\[
\sgn(-s)=\sgn\Delta_4(\alpha_4,\alpha_3,\alpha_2,\alpha_1)=\sgn\Delta_4(\alpha_1,\alpha_2,\alpha_2,\alpha_4)=\sgn(s).
\]
\end{proof}

Let us give a geometric interpretation to the sign of a quadrisecant.

Let $\alpha_1(t)$, $\alpha_2(t)$, $\alpha_3(t)$, $\alpha_4(t)$ be spatial curves such that the points $\alpha_1(0)$, $\alpha_2(0)$, $\alpha_3(0)$, $\alpha_4(0)$ lie on one line $l$ (in any order). Let $\kappa$ be a direction vector of the quadrisecant $l$. Choose a point $O$ on the quadrisecant. Then $\overrightarrow{O\alpha_i(0)}=z_i\kappa$, $z_i\in\R$. Choose $O$ so that $z_i<0$, $i=1,2,3,4$.

Assume that $\kappa,\dot\alpha_1,\dot\alpha_2$ are independent. Choose a plane $\Pi$ that intersect transversely $l$ at the point $O$. Let $\alpha_{12\underline{3}}$ be the parametrized curve formed by intersection of the trisecants on $\alpha_1,\alpha_2,\alpha_3$ with the plane $\Pi$. The underlined index $3$ means we take the parameter on $\alpha_3$ for the parameter on $\alpha_{12\underline{3}}$. Analogously, define $\alpha_{12\underline{4}}$ (Fig.~\ref{fig:4secant_geom}).

\begin{figure}[h]
    \centering
    \includegraphics[width=0.5\linewidth]{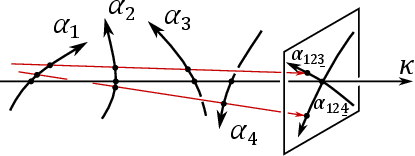}
    \caption{Curves $\alpha_{12\underline{3}}$ and $\alpha_{12\underline{4}}$}
    \label{fig:4secant_geom}
\end{figure}

Let $or(\alpha_{12\underline{3}},\alpha_{12\underline{4}})\in\{-1,0,1\}$ denote the orientation of the triple that consists of the tangent vectors $\dot\alpha_{12\underline{3}}$, $\dot\alpha_{12\underline{4}}$ and $\kappa$. In other words,
\[
or(\alpha_{12\underline{3}},\alpha_{12\underline{4}}) =\sgn(\dot\alpha_{12\underline{3}},\dot\alpha_{12\underline{4}},\kappa).
\]
Analogously, one defines $or(\alpha_1,\alpha_2)$.

\begin{proposition}[Geometric meaning of the quadrisecant sign]\label{prop:4secant_sign_geometry}
\begin{multline*}
\sgn\Delta_4(\alpha_1,\alpha_2,\alpha_3,\alpha_4)=\\
or(\alpha_{12\underline{3}},\alpha_{12\underline{4}})\cdot or(\alpha_1,\alpha_2)\cdot\sgn[(z_1-z_3)(z_2-z_3)(z_1-z_4)(z_2-z_4)].
\end{multline*}
\end{proposition}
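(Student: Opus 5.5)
The plan is to compute the tangent vectors $\dot\alpha_{12\underline{3}}$ and $\dot\alpha_{12\underline{4}}$ explicitly to first order and compare with the factorized form of $\Delta_4$ from the proof of Proposition~\ref{prop:4secant_delta}. That form is
\[
\Delta_4=(z_1-z_4)(z_2-z_3)(\dot\alpha_1,\dot\alpha_3,\kappa)(\dot\alpha_2,\dot\alpha_4,\kappa)-(z_1-z_3)(z_2-z_4)(\dot\alpha_1,\dot\alpha_4,\kappa)(\dot\alpha_2,\dot\alpha_3,\kappa).
\]
Throughout I write $[a,b]=(a,b,\kappa)$; this is an area form on the quotient $V=\R^3/\R\kappa$. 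Orientations of pairs of vectors in $\Pi$ together with $\kappa$ are computed by $[\cdot,\cdot]$ after projecting to $V$. Since $\Pi$ is transversal to $\kappa$, the projection $\Pi\to V$ is an isomorphism, and so the choice of $\Pi$ does not matter.

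Step 1 (linearization). Fix $i\in\{3,4\}$. Move the point on $\alpha_i$ to $\alpha_i(t)$ and let the trisecant on $\alpha_1,\alpha_2,\alpha_i$ move accordingly. To first order, the displacement of the line, taken modulo $\kappa$, is an affine function $D(z)=D_0+zD_1\in V$ of the coordinate $z$ along $l$. The incidence conditions read $D(z_1)=u_1\dot\alpha_1$, $D(z_2)=u_2\dot\alpha_2$, $D(z_i)=\dot\alpha_i$ (taking $t=1$), with unknown scalars $u_1,u_2$. The vector $D(0)=D_0$ is the projection of $\dot\alpha_{12\underline{i}}$ to $V$. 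Affineness of $D$ gives the relation
\[
(z_2-z_i)u_1\dot\alpha_1+(z_i-z_1)u_2\dot\alpha_2+(z_1-z_2)\dot\alpha_i=0 \quad\text{in } V.
\]
Because $[\dot\alpha_1,\dot\alpha_2]\neq0$ by the independence assumption, Cramer's rule determines $u_1,u_2$. Interpolating then gives $D(0)=\frac{z_2D(z_1)-z_1D(z_2)}{z_2-z_1}$. Explicitly, up to an overall sign I will keep track of,
\[
D^{(i)}(0)=\frac{1}{[\dot\alpha_1,\dot\alpha_2]}\Bigl(\frac{z_2[\dot\alpha_i,\dot\alpha_2]}{z_2-z_i}\dot\alpha_1-\frac{z_1[\dot\alpha_1,\dot\alpha_i]}{z_i-z_1}\dot\alpha_2\Bigr).
\]

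Step 2 (the bracket). Next I compute $[D^{(3)}(0),D^{(4)}(0)]$. It equals
\[
\frac{z_1z_2}{[\dot\alpha_1,\dot\alpha_2]}\cdot\Bigl(\text{a combination of } \frac{[\dot\alpha_3,\dot\alpha_2][\dot\alpha_1,\dot\alpha_4]}{(z_2-z_3)(z_4-z_1)} \text{ and } \frac{[\dot\alpha_1,\dot\alpha_3][\dot\alpha_4,\dot\alpha_2]}{(z_3-z_1)(z_2-z_4)}\Bigr).
\]
One factor $[\dot\alpha_1,\dot\alpha_2]$ cancels against the bracket $[\dot\alpha_1,\dot\alpha_2]$ of the two basis vectors. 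Multiplying by $(z_1-z_3)(z_2-z_3)(z_1-z_4)(z_2-z_4)$ clears the denominators. The numerator should then become exactly the expression $p_2q_3-p_3q_2=\Delta_4$ from the proof of Proposition~\ref{prop:4secant_delta}.

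Step 3 (signs). This leaves an identity of the form
\[
[\dot\alpha_{12\underline3},\dot\alpha_{12\underline4}]\cdot(z_1-z_3)(z_2-z_3)(z_1-z_4)(z_2-z_4)=\pm\frac{z_1z_2}{[\dot\alpha_1,\dot\alpha_2]}\Delta_4 .
\]
Since all $z_i<0$, we have $z_1z_2>0$, and $\sgn[\dot\alpha_1,\dot\alpha_2]=or(\alpha_1,\alpha_2)$. Taking signs then yields the formula of the proposition.

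The main obstacle is the sign bookkeeping in Steps 1 and 2. Several places can each flip the sign: the orientation convention of $or$ with $\kappa$ placed last, the sign from Cramer's rule, the ordering in the interpolation formula, and matching the resulting terms with $p_2q_3$ versus $p_3q_2$. I will fix these by checking the final identity on a simple explicit configuration, such as straight arcs with chosen $z_i$, in addition to the symbolic computation. I also need the degenerate cases ($z_i=z_j$ or zero brackets) to be consistent with both sides vanishing; I will treat these by continuity.
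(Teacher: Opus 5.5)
Your proposal is correct and is essentially the paper's computation. Your coefficients in $D^{(i)}(0)$ are exactly the paper's $\dot x_0,\dot y_0$, and carrying out Step 2 gives $[\dot\alpha_{12\underline3},\dot\alpha_{12\underline4}]\,(z_1-z_3)(z_2-z_3)(z_1-z_4)(z_2-z_4)=\frac{z_1z_2}{[\dot\alpha_1,\dot\alpha_2]}\Delta_4$ with a plus sign, so the sign bookkeeping you flagged works out. The one difference is in handling $\Pi$: working in $\R^3/\R\kappa$ makes independence of $\Pi$ automatic, whereas the paper first takes $\Pi$ parallel to $\dot\alpha_1,\dot\alpha_2$ and then invokes the dihedral angle between the two trisecant surfaces.
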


\begin{proof}
    First, we consider the case where the plane $\Pi$ is parallel to $\dot\alpha_1$ and $\dot\alpha_2$. Since the left term and the right term of the equation are infinitesimal, we can linearize the curves: $\alpha_1(u)=\alpha_1+\dot\alpha_1u$, $\alpha_2(v)=\alpha_2+\dot\alpha_2v$.

    Consider the coordinate system with the center $O$ and the basis $\dot\alpha_1$, $\dot\alpha_2$ and $\kappa$. The bundle of secants on the curves $\alpha_1(u)$ and $\alpha_2(v)$ projects a point $(x,y,z)$ to a point $(x_0,y_0,0)\in\Pi$ (Fig.~\ref{fig:secant_sheaf}).

\begin{figure}
    \centering
    \includegraphics[width=0.5\linewidth]{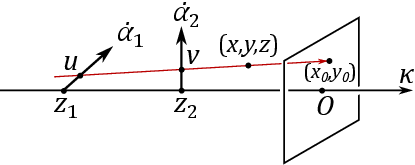}
    \caption{2-secant projection}
    \label{fig:secant_sheaf}
\end{figure}

The secant intersects $\alpha_1$ at a point $(u,0,z_1)$ and intersects $\alpha_2$ at a point $(0,v,z_2)$. Then
\begin{equation}\label{eq:xy_to_uv}
u=\frac{z_1-z_2}{z-z_2}x,\quad v=\frac{z_2-z_1}{z-z_1}y
\end{equation}

and
\[
x_0=-\frac{z_2}{z_1-z_2}u=-\frac{z_2}{z-z_2}x,\quad y_0=-\frac{z_2}{z_2-z_1}v=-\frac{z_1}{z-z_1}y.
\]

Let $\alpha(t)$ be a curve such that $\alpha(0)$ lies on the quadrisecant $l$, i.e. $\alpha(0)=(0,0,z(0))$. Then $\alpha(t)=x(t)\dot\alpha_1+y(t)\dot\alpha_2+z(t)\kappa$ where

\begin{equation}\label{eq:alpha_to_xyz}
x(t)=\frac{(\kappa,\alpha(t),\dot\alpha_2)}{(\dot\alpha_1,\dot\alpha_2,\kappa)},\quad
y(t)=-\frac{(\kappa,\alpha(t),\dot\alpha_1)}{(\dot\alpha_1,\dot\alpha_2,\kappa)},\quad
z(t)=\frac{(\dot\alpha_1,\dot\alpha_2,\alpha(t))}{(\dot\alpha_1,\dot\alpha_2,\kappa)}.
\end{equation}

Then the derivations at $t=0$ are equal
\begin{equation}\label{eq:x0y0_derivation}
\begin{gathered}
    \dot x_0=-\frac{z_2}{z-z_2}\dot x=-\frac{z_2}{z-z_2}\cdot\frac{(\kappa,\dot\alpha,\dot\alpha_2)}{(\dot\alpha_1,\dot\alpha_2,\kappa)},\\
    \dot y_0=-\frac{z_1}{z-z_1}\dot y=\frac{z_1}{z-z_1}\cdot\frac{(\kappa,\dot\alpha,\dot\alpha_1)}{(\dot\alpha_1,\dot\alpha_2,\kappa)}.
\end{gathered}
\end{equation}

Applying~\eqref{eq:x0y0_derivation} to the curves $\alpha_3(t)$ and $\alpha_4(t)$, we see that $or(\alpha_{12\underline{3}},\alpha_{12\underline{4}})$ is the sign of the expression
\[
or(\alpha_1,\alpha_2)\cdot\begin{vmatrix}
                                 x_{03} & x_{04}\\ y_{03} & y_{04}
                             \end{vmatrix}
\]
where
\begin{multline*}
\begin{vmatrix}
x_{03} & x_{04}\\ y_{03} & y_{04}
\end{vmatrix}=
\begin{vmatrix}
   -\frac{z_2}{z_3-z_2}\cdot\frac{(\kappa,\dot\alpha_3,\dot\alpha_2)}{(\dot\alpha_1,\dot\alpha_2,\kappa)} & -\frac{z_2}{z_4-z_2}\cdot\frac{(\kappa,\dot\alpha_4,\dot\alpha_2)}{(\dot\alpha_1,\dot\alpha_2,\kappa)} \\
  \frac{z_1}{z_3-z_1}\cdot\frac{(\kappa,\dot\alpha_3,\dot\alpha_1)}{(\dot\alpha_1,\dot\alpha_2,\kappa)} & \frac{z_1}{z_4-z_1}\cdot\frac{(\kappa,\dot\alpha_4,\dot\alpha_1)}{(\dot\alpha_1,\dot\alpha_2,\kappa)}
\end{vmatrix}=\\
-\frac{z_1z_2}{(\dot\alpha_1,\dot\alpha_2,\kappa)^2(z_1-z_3)(z_2-z_3)(z_1-z_4)(z_2-z_4)}\cdot\\
\cdot\begin{vmatrix}
   (z_1-z_3)(\kappa,\dot\alpha_2,\dot\alpha_3) & (z_1-z_4)(\kappa,\dot\alpha_2,\dot\alpha_4) \\
  (z_2-z_3)(\kappa,\dot\alpha_1,\dot\alpha_3) & (z_2-z_4)(\kappa,\dot\alpha_1,\dot\alpha_4)
\end{vmatrix}=\\
\frac{z_1z_2}{(\dot\alpha_1,\dot\alpha_2,\kappa)^2(z_1-z_3)(z_2-z_3)(z_1-z_4)(z_2-z_4)}\Delta_4(\alpha_1,\alpha_2,\alpha_3,\alpha_4).
\end{multline*}
Since $z_1<0$,  $z_2<0$ and $(\dot\alpha_1,\dot\alpha_2,\kappa)^2>0$,
\begin{multline*}
or(\alpha_{12\underline{3}},\alpha_{12\underline{4}})=\\
or(\alpha_1,\alpha_2)\cdot\sgn\Delta_4(\alpha_1,\alpha_2,\alpha_3,\alpha_4)
\cdot\sgn[(z_1-z_3)(z_2-z_3)(z_1-z_4)(z_2-z_4)].
\end{multline*}
that implies the statement for $\Pi$ parallel to $\dot\alpha_1$ and $\dot\alpha_2$.

The trisecants on $\alpha_1(t)$, $\alpha_2(t)$, $\alpha_3(t)$, and those on $\alpha_1(t)$, $\alpha_2(t)$, $\alpha_4(t)$ form two surfaces which intersect in the line $l$. Then $or(\alpha_{12\underline{3}},\alpha_{12\underline{4}})$ is the sign of the dihedral angle between the surfaces and does not depend on the choice of the transversal plane $\Pi$.
\end{proof}

\begin{remark}\label{rem:4secant_geom}
1. By Definition~\ref{def:2secant_sign}, the sign $\sgn(\alpha_1,\alpha_2)$ of the line $l$ considered as a $2$-secant on $\alpha_1(t)$ and $\alpha_2(t)$ is equal to $or(\alpha_1,\alpha_2)\cdot\sgn(z_1-z_2)$. Hence,
\begin{multline*}
\sgn\Delta_4(\alpha_1,\alpha_2,\alpha_3,\alpha_4)=\\
or(\alpha_{12\underline{3}},\alpha_{12\underline{4}})\cdot \sgn(\alpha_1,\alpha_2)\cdot\sgn[(z_1-z_2)(z_1-z_3)(z_2-z_3)(z_1-z_4)(z_2-z_4)].
\end{multline*}

If $z_1,z_2\ll 0$ then by~\eqref{eq:x0y0_derivation} $\dot x_0\approx\dot x$ and $\dot y_0\approx\dot y$. Hence, we can identify $\alpha_{12\underline{3}}$ with $\alpha_3(t)$ and  $\alpha_{12\underline{4}}$ with $\alpha_4(t)$. Then
\[
or(\alpha_{12\underline{3}},\alpha_{12\underline{4}})=or(\alpha_3,\alpha_4)=\sgn(\alpha_3,\alpha_4)\sgn(z_3-z_4).
\]
and finally
\begin{multline}\label{far_secant_sheaf}
 \sgn\Delta_4(\alpha_1,\alpha_2,\alpha_3,\alpha_4)=\sgn(\alpha_1,\alpha_2)\cdot\sgn(\alpha_3,\alpha_4)\cdot\\
 \cdot\sgn[(z_1-z_2)(z_1-z_3)(z_1-z_4)(z_2-z_3)(z_2-z_4)(z_3-z_4)].
\end{multline}

Note that in general, $or(\alpha_{12\underline{3}},\alpha_{12\underline{4}})$ and $or(\alpha_3,\alpha_4)$ may have different signs, for example, when $\dot\alpha_3=\dot\alpha_4$ and $or(\alpha_3,\alpha_4)=0$ but $or(\alpha_{12\underline{3}},\alpha_{12\underline{4}})\ne 0$.

2. The orientation of the trisecant curve $\alpha_{\underline{1}23}$ induced by the parameter $u$ on $\alpha_1$, differs from the orientation of $\alpha_{12\underline{3}}$ by the sign of $\dot u$. From~\eqref{eq:xy_to_uv} and~\eqref{eq:alpha_to_xyz}
\[
\dot u=\frac{z_1-z_2}{z_3-z_2}\cdot\frac{(\kappa,\dot\alpha_3,\dot\alpha_2)}{(\dot\alpha_1,\dot\alpha_2,\kappa)}=
\left(\frac{z_1-z_2}{z_3-z_2}\right)^2\cdot\frac{(\dot\alpha_3,\dot\alpha_2,\alpha_3-\alpha_2)}{(\dot\alpha_1,\dot\alpha_2,\alpha_1-\alpha_2)},
\]
hence, $\sgn\dot u=\sgn(\alpha_1,\alpha_2)\cdot\sgn(\alpha_2,\alpha_3)$. Analogously, the orientations $\alpha_{1\underline{2}3}$ and  $\alpha_{12\underline{3}}$ differ by
$\sgn\dot v=\sgn(\alpha_1,\alpha_2)\cdot\sgn(\alpha_1,\alpha_3)$.
\end{remark}

\subsection{Knot diagrams and projectors}\label{subsect:projectors}

A conventional way to study knots is to use their diagrams. Diagrams are generic projections of a knot into a plane in $\R^3$. The image of such a projection is a planar $4$-valent graph. Hence, the points of the projection plane splits into regions (faces), arcs (edges) and crossings (vertices) of the diagram. Diagrams of isotopic knots are connected by a sequence of local transformations called Reidemeister moves.

Below we try to 
generalize the way to assign a diagram to a knot.

Consider a family $\mathcal P\subset 2^{\R^3}$ of subsets of $\R^3$ such that $\bigcup_{p\in\mathcal P} p=\R^3$. We will call the elements $p\in\mathcal P$ \emph{probes}. Below we consider the case when probes are oriented lines.

Given a link $L$, it defines a stratification $\Pi(L)$ of $\mathcal P$. The strata are distinguished by the number of intersection points of a probe in the stratum with the link, and the equivalence classes of the germs of the intersection points. For a link $L$ in general position with respect to $\mathcal P$, we call the stratification $\Pi(L)$ the \emph{diagram} of $L$ in $\mathcal P$. The correspondence $L\mapsto\Pi(L)$ is called the \emph{projector} of $\mathcal P$.


Consider the set of oriented lines in $\R^3$ (identified with the tangent bundle $TS^2$) and the set of directions, i.e. oriented lines which contain the origin, (identified with $S^2$). Consider the projector map $\pi\colon S^2\times\R^3\to TS^2$ that assings to a direction $\nu$ and a point $x\in\R^3$ the unique oriented line $l$ which goes through $x$ in the direction $\nu$.

Choose a subset $\mathcal D\subset S^2$, we will call it a \emph{set of admissible directions}. Let $\mathcal P_{\mathcal D}$ be the preimage of $\mathcal D$ under the natural projection $TS^2\to S^2$.

For such a $\mathcal P_{\mathcal D}$, the diagram $\Pi(L)$ of a link $L$ is the image of the restriction $\pi|_{\mathcal D\times L}\colon \mathcal D\times L\to\mathcal P_{\mathcal D}$ stratified by singularity types of values.

Among the strata of $\Pi(L)$ we distinguish the \emph{regular} strata of codimension $(k-1)$ which consists of transversal $k$-secants, $k\ge0$, and \emph{singular} strata which lie in closures of regular strata.

\subsubsection{Spot (Reidemeister) projector}
Fix a direction $\nu\in S^2$ and consider $\mathcal D_\nu=\{\nu\}$. For a link $L$ in general position, the probe set $\mathcal P_\nu=\mathcal P_{\mathcal D_\nu}$ splits $\mathcal P_\nu=\mathcal P_\nu(L)_0\sqcup\mathcal P_\nu(L)_1\sqcup\mathcal P_\nu(L)_2$ where $\mathcal P_\nu(L)_i$ is the set of $i$-secants of $L$, $i=0,1,2$. Connected components of $\mathcal P_\nu(L)_0$ are called \emph{regions} of the diagram $\Pi(L)$,  components of $\mathcal P_\nu(L)_1$ are \emph{(semi) arcs} of the diagram, and elements of $\mathcal P_\nu(L)_2$ are called \emph{crossings}. The stratum $\mathcal P_\nu(L)_i$ has codimension $i$ in the $2$-dimensional space $\mathcal P_\nu$.

For a generic isotopy $L_t$, $t\in[0,1]$, the stratifications $\Pi(L_t)$ can include strata that correspond to Reidemeister moves: a tangent $1$-secant, a $2$-secant with zero sign, or $3$-secant (Fig.~\ref{fig:2secant_singularities})
\begin{figure}[h]
    \centering
    \includegraphics[width=0.5\linewidth]{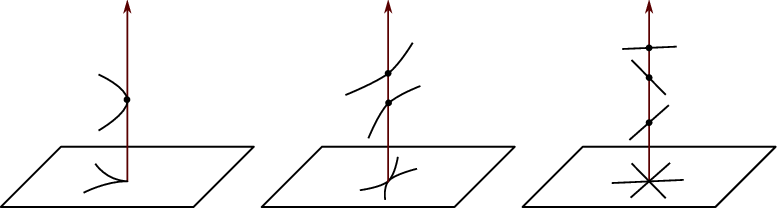}
    \caption{Secants corresponding to Reidemeister moves}
    \label{fig:2secant_singularities}
\end{figure}

\subsubsection{Spinner projector}

Fix a direction $\nu\in S^2$ and consider the set $\mathcal D^\bot_\nu=\{z\in S^2\mid z\cdot\nu=0\}$. The probe set $\mathcal P^\bot_\nu=\mathcal P_{\mathcal D^\bot_\nu}$ has dimension $3$.

One can look at the spinner projector as a $1$-parameter family of projections on planes parallel to $\nu$. This approach was exploited by Fiedler and Kurlin~\cite{FK2} who described diagram singularities.

Assume that $\nu$ is unit vector of the axis $Oz$. A link $L$ in general position has a finite number of extrema (minima and maxima) with respect to the height function $z$. The space $\mathcal P^\bot_\nu$ splits into the following strata according the codimension:
\begin{itemize}
    \item[$0$:] $0$-secants;
    \item[$1$:] transversal $1$-secants;
    \item[$2$:] transversal $2$-secants, transversal $1$-secants at extrema;
    \item[$3$:] non zero transversal $3$-secants, transversal $2$-secants on an extremum, tangent $1$-secants at extrema.
 \end{itemize}

For a generic isotopy $L_t$, $t\in[0,1]$, a diagram $L_t$ can contain two extrema of the same height, or an infliction point of the height function. Then the following new strata can appear:
\begin{itemize}
    \item transversal $1$-secants on an infliction point ($1$-parametric strata);
    \item tangent $1$-secants on an infliction point;
    \item transversal $2$-secants on two extrema;
    \item tangent $2$-secants on an extremum;
    \item transversal $3$-secants on an extremum;
    \item zero transversal $3$-secants;
    \item transversal $4$-secant.
\end{itemize}

Perturbations of singularities of the last four strata can include a trisecant. These singularities are shown in Fig.~\ref{fig:3secant_singularities}.
\begin{figure}[h]
    \centering
    \includegraphics[width=0.9\linewidth]{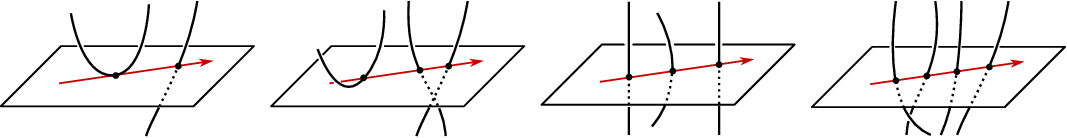}
    \caption{Singularities with a trisecant in a perturbation}
    \label{fig:3secant_singularities}
\end{figure}

\subsubsection{Omni projector}

Let $\mathcal D=S^2$ and $\mathcal P^o=\mathcal P_{S^2}$ be the set of all oriented lines in $\R^3$. The omni probe set $\mathcal P^o$ has dimension $4$.

Singular points of the projector map were described by David~\cite{Dav} who also proved a transversality theorem for it. Note that David considered not only orthogonal projections, but also radial projections.

A link $L$ in general position can have flattening points (equivalent to the singularity $(t,t^2,t^5)$) but not inflection points (equivalent to $(t,t^3,t^4)$). So the space of secants splits into the following strata according the codimension:
\begin{itemize}
    \item[$0$:] $0$-secants;
    \item[$1$:] transversal $1$-secants;
    \item[$2$:] transversal $2$-secants;
    \item[$3$:] transversal $3$-secants, tangent $1$-secant on a non-flattening point;
    \item[$4$:] non zero transversal $4$-secants, $2$-secant on a tangent point, tangent $1$-secant on a flattening point.
 \end{itemize}

For a generic isotopy $L_t$, $t\in[0,1]$, a diagram $L_t$ can contain inflection points and biflattening points (equivalent to $(t,t^2,t^7)$). Then the following new strata can appear:
\begin{itemize}
    \item tangent $1$-secants on a biflattening or infliction point;
    \item $2$-secants on a flattening point;
    \item $2$-secants with two tangent point;
    \item $3$-secant with a tangent point;
    \item zero transversal $4$-secants;
    \item transversal $5$-secant.
\end{itemize}

\section{Multicrossing complex}\label{sect:multicrossing_complex}

Let $L$ be a classical oriented link. We can think that $L$ lies inside a ball $B\subset\R^3$. Let $N(L)$ be a tubular neighborhood of $L$, and $M_L=\overline{B\setminus N(L)}$. Choose a point $x_0\in S=\partial B$.

For $n\ge 0$, consider the space $\widetilde{\mathcal C}(L,n)$ that consists of homotopy classes of tuples $\gamma=(\gamma_0,\dots,\gamma_n)$ of paths
\begin{gather*}
\gamma_0\colon (I,0,1)\to (M_L, S,\partial N(L)),\\
\gamma_i\colon (I,0,1)\to (M_L, \partial N(L),\partial N(L)),\ i=1,\dots,n-1,\\
\gamma_n\colon (I,0,1)\to (M_L, \partial N(L), S);
\end{gather*}
such that
\begin{itemize}
\item $\gamma_{i-1}(1)$ and $\gamma_{i}(0)$ are different points of the same meridian $\mu_i$, $1\le i\le n$.

Denote the part of the meridian $\mu_i$ from $\gamma_{i-1}(1)$ to $\gamma_{i}(0)$ by $\mu_{i,l}$, and the part from $\gamma_{i}(0)$ to $\gamma_{i-1}(1)$ by $\mu_{i,r}$.
\item the meridians $\mu_i$, $i=1,\dots,n$, are all distinct.
\end{itemize}

\begin{figure}[h]
    \centering
    \includegraphics[width=0.4\linewidth]{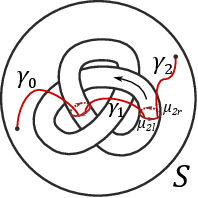}
    \caption{A multicrossing}
    \label{fig:gensecant}
\end{figure}

\begin{remark}
    1. Since $S$ is simply connected, we can suppose that $\gamma_0$ starts and $\gamma_n$ ends at $x_0\in S$.

    2. There is a map from $\mathcal S_n(L)$ to $\widetilde{\mathcal C}(L,n)$ such that for an $s\in\mathcal S_n(L)$ the paths $\gamma_0,\dots, \gamma_n$ are the connected components of $s\cap M_L$.
\end{remark}

The sequence $(\widetilde{\mathdutchcal C}(L,n))_{n\ge 0}$ with the maps $\partial^n_{i,l},\partial^n_{i,l}\colon \widetilde{\mathdutchcal C}(L,n)\to\widetilde{\mathdutchcal C}(L,n-1)$, $i=1,\dots,n$, given by the formulas
\begin{gather*}
    \partial^n_{i,l}(\gamma_0,\dots,\gamma_n)=(\gamma_0,\dots,\gamma_{i-2},\gamma_{i-1}\mu_{i,l}\gamma_i,\gamma_{i+1},\dots,\gamma_n),\\
    \partial^n_{i,r}(\gamma_0,\dots,\gamma_n)=(\gamma_0,\dots,\gamma_{i-2},\gamma_{i-1}(\mu_{i,r})^{-1}\gamma_i,\gamma_{i+1},\dots,\gamma_n),
\end{gather*}
is a semi-cubical set.

For $n\ge 2$, consider a subset $\widetilde{\mathdutchcal D}(L,n)=\bigcup_{i=1}^{n-1}\widetilde{\mathdutchcal D}_i(L,n) \subset \widetilde{\mathdutchcal C}(L,n)$ where $\widetilde{\mathdutchcal D}_i(L,n)$ consists of elements $\gamma$ for which
there exists a curve $\delta\colon[0,1]\to\partial N(L)$ such that $\delta(0)=\gamma_i(0)$, $\delta(1)=\gamma_i(1)$, $\delta\cap\bigcup_{i=1}^n\mu_i=\partial\delta$, and the loop $\gamma_i\delta^{-1}$ is contractible in $M_L$. In other words, $\gamma_i$ is homotopic to a curve $\delta$ in $\partial N$ which intersects the meridians $\mu_j$, $j=1,\dots,n$, only in the end points which are the endpoints of $\gamma_i$. In particular, this means that there is no other meridians between $\mu_i$ and $\mu_{i+1}$.


\begin{lemma}
    1. For any $1\le i<j\le n$ and $p,q\in\{l,r\}$, $d^{n-1}_{i,p}d^n_{j,q}=d^{n-1}_{j-1,q}d^n_{i,p}$;

    2. For any $\gamma\in \widetilde{\mathdutchcal D}_i(L,n)$ and any $p\in\{l,r\}$, $d^n_{i,p}(\gamma)=d^n_{i+1,p}(\gamma)$ and $d^n_{j,p}(\gamma)\in\widetilde{\mathdutchcal D}_{i-1}(L,n-1)$ when $j<i$, and $d^n_{j,p}(\gamma)\in\widetilde{\mathdutchcal D}_{i}(L,n-1)$ when $j>i+1$.
\end{lemma}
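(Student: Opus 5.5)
The plan is to prove both statements directly from the formulas for $\partial^n_{i,l}$ and $\partial^n_{i,r}$ (denoted $d^n_{i,p}$ in the lemma). Each face map deletes the meridian $\mu_i$ and glues $\gamma_{i-1}$ and $\gamma_i$ along one of the two halves $\mu_{i,l}$ or $\mu_{i,r}^{-1}$. Both halves join $\gamma_{i-1}(1)$ to $\gamma_i(0)$. Everything is considered up to homotopy of tuples, with endpoints constrained to lie on the prescribed meridians.

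\emph{Part 1 (semi-cubical identity).} Take $i<j$.

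\begin{itemize}
\item First suppose $j>i+1$. Then $d^n_{j,q}$ modifies only the entries $\gamma_{j-1},\gamma_j$, and $d^n_{i,p}$ modifies only $\gamma_{i-1},\gamma_i$. These index sets are disjoint, so the two operations commute. The only point to check is the reindexing: after $\gamma_{i-1},\gamma_i$ are merged, the meridian $\mu_j$ becomes the $(j-1)$-st one, and its halves $\mu_{j,q}$ are unchanged.
\item Next suppose $j=i+1$. Both sides equal the same concatenation
\[
(\gamma_0,\dots,\gamma_{i-1}\mu_{i,p}^{\pm}\gamma_i\mu_{i+1,q}^{\pm}\gamma_{i+1},\dots,\gamma_n).
\]
The point to check is that when $\mu_{i+1}$ is removed first, the meridian $\mu_i$ and its arcs $\mu_{i,l},\mu_{i,r}$ are unchanged. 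This holds because the endpoints $\gamma_{i-1}(1)$ and $\gamma_i(0)$ are untouched by that operation.
\end{itemize}

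Concatenation of paths is associative up to homotopy, so the identity holds in $\widetilde{\mathdutchcal C}(L,n-2)$.

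\emph{Part 2.} Let $\gamma\in\widetilde{\mathdutchcal D}_i(L,n)$, witnessed by the path $\delta\subset\partial N(L)$ homotopic to $\gamma_i$. Since $\delta$ meets no meridian except at its endpoints, it runs from $\mu_i$ to $\mu_{i+1}$ inside the annulus of $\partial N(L)$ between these two consecutive meridians.

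\begin{itemize}
\item Replace $\gamma_i$ by $\delta$ and compute the two faces:
\[
d^n_{i,p}(\gamma)\ \text{contains}\ \gamma_{i-1}\mu_{i,p}^{\pm}\delta\,\gamma_{i+1},\qquad d^n_{i+1,p}(\gamma)\ \text{contains}\ \gamma_{i-1}\,\delta\,\mu_{i+1,p}^{\pm}\gamma_{i+1}.
\]
Both are $(n-1)$-tuples with the same remaining meridians $\mu_j$, $j\neq i,i+1$.
\item The main step is to show that the paths $\mu_{i,p}^{\pm}\delta$ and $\delta\mu_{i+1,p}^{\pm}$ are homotopic rel endpoints in $\partial N(L)$. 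Each is a path in the annulus $A$ bounded by $\mu_i$ and $\mu_{i+1}$, going from $\gamma_{i-1}(1)\in\mu_i$ to $\gamma_{i+1}(1)\in\mu_{i+1}$. In $A\simeq S^1$ their homotopy class rel endpoints is determined by winding number. The side labels $l,r$ are defined consistently using the orientation of the link and of the meridians. Hence going around the left half of $\mu_i$ and then crossing along $\delta$ is isotopic to crossing along $\delta$ and then going around the left half of $\mu_{i+1}$: sweep $\delta$ across the left half of the annulus.
\item I expect the main obstacle to be this orientation and bookkeeping check, namely making precise that ``left'' on $\mu_i$ corresponds to ``left'' on $\mu_{i+1}$ under sliding through $A$. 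There is an additional subtlety: the endpoint $\gamma_i(0)$ must slide along the meridian during the homotopy, and this is allowed, since homotopy classes permit endpoints to move on their meridians and the meridian itself may move within the annulus. I would handle this by taking a product chart $A\cong S^1\times[0,1]$ in which the $\mu$'s are the circles $S^1\times\{0\}$ and $S^1\times\{1\}$, and writing the homotopy explicitly there.
\end{itemize}

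For the remaining claims, take $j<i$ or $j>i+1$. The face $d^n_{j,p}$ does not touch $\gamma_i$ or the meridians $\mu_i,\mu_{i+1}$. Removing $\mu_j$ only deletes a meridian, so $\delta$ still meets the remaining meridians only at its endpoints. Therefore the same $\delta$ witnesses membership in $\widetilde{\mathdutchcal D}$, with the index shifted to $i-1$ when $j<i$ and kept as $i$ when $j>i+1$.
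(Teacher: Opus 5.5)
The paper states this lemma without proof (it is carried over from \cite{Nca}), so your argument has to stand on its own. Part 1 is correct: the faces act on disjoint index blocks, and for $j=i+1$ the arcs $\mu_{i,p},\mu_{i+1,q}$ are determined by endpoints that the other face does not touch. The two membership claims at the end of Part 2 are also correct: for $j<i$ or $j>i+1$ the same $\delta$ is still a witness, with the index shift you state.

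The equality $d^n_{i,p}(\gamma)=d^n_{i+1,p}(\gamma)$, however, rests on a step that is ill-posed as written, and the idea you relegate to an ``additional subtlety'' is in fact the whole argument.

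\textbf{Why the stated step fails.} The paths $\mu^{\pm}_{i,p}\delta$ and $\delta\mu^{\pm}_{i+1,p}$ have different endpoints. The first runs from $\gamma_{i-1}(1)$ to $\gamma_i(1)$; the second runs from $\gamma_i(0)$ to $\gamma_{i+1}(0)$. So ``homotopic rel endpoints'' has no meaning for them, and $\gamma_{i+1}(1)$ is not on $\mu_{i+1}$ anyway. Your concatenated notation $\gamma_{i-1}\mu^{\pm}_{i,p}\delta\,\gamma_{i+1}$ hides exactly what distinguishes the two faces:
\begin{itemize}
\item $d^n_{i,p}(\gamma)$ has entries $\gamma_{i-1}\mu^{\pm}_{i,p}\delta$ and $\gamma_{i+1}$, with their junction on $\mu_{i+1}$;
\item $d^n_{i+1,p}(\gamma)$ has entries $\gamma_{i-1}$ and $\delta\mu^{\pm}_{i+1,p}\gamma_{i+1}$, with their junction on $\mu_i$.
\end{itemize}
What must be constructed is a homotopy of $(n-1)$-tuples that drags a junction, i.e.\ a pair of \emph{distinct} points on a meridian, across $A$ from $\mu_{i+1}$ to $\mu_i$.

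\textbf{One way to do it.}
\begin{enumerate}
\item Take a chart $A\cong S^1\times[0,1]$ whose level circles are coherently oriented meridians. Homotope $\delta$ rel endpoints to be monotone in height. Let $\phi\in(0,2\pi)$ be the angle of $\mu_{i+1,l}$.
\item For $h$ running from $1$ to $0$, use the entries $\gamma_{i-1}\mu^{\pm}_{i,p}\delta|_{[0,h]}$ and $(\delta|_{[h,1]}+\phi)\gamma_{i+1}$. The two junction points stay at angular distance $\phi$, so they never collide. No other meridian of this face meets the interior of $A$.
\item Retract the end of the first entry along $\mu^{\pm}_{i,p}$ back to $\gamma_{i-1}(1)$, rotating the other junction point by the same angle.
\item The start of the second entry may now move freely in the contractible arc $\mu_i\setminus\{\gamma_{i-1}(1)\}$. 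You are left comparing two paths in $A$ from $\gamma_i(0)$ to $\gamma_{i+1}(0)$.
\end{enumerate}
Only at this stage does your winding-number comparison apply. Lift to the strip $\mathbb R\times[0,1]$ and let $W$ be the displacement of $\delta$. Both candidate paths then have displacement $W+\phi$ for $p=l$, and $W-\psi+\phi+(\psi-2\pi)=W+\phi-2\pi$ for $p=r$. Here $\psi$ is the angle of $\mu_{i,l}$, and $\mu^{-1}_{i+1,r}$ has displacement $\phi-2\pi$.

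This computation also shows explicitly that neither the winding of $\delta$ nor whether $\mu_{i+1}$ lies ahead of or behind $\mu_i$ along the link matters. Your phrase ``sweep $\delta$ across the left half of the annulus'' does not by itself establish this.
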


For an abelian group $A$, consider the complex
\[
    C_n(L,A)=\widetilde{\mathdutchcal C}(L,n)\otimes A/\widetilde{\mathdutchcal D}(L,n)\otimes A
\]
with the differential
\[
d(\gamma)=\sum_{i=1}^n(-1)^i(\partial^n_{i,l}(\gamma)-\partial^n_{i,r}(\gamma)),
\]
and consider the complex
\[
    C^n(L,A)=\{\phi\in Hom(\widetilde{\mathdutchcal C}(L,n),A)\mid \phi|_{\widetilde{\mathdutchcal D}(L,n)}=0\}
\]
with the differential $(d\phi)(\gamma)=\phi(d\gamma)$.

\begin{definition}\label{def:homotopy_crossing_homology}
    The homology $H_*(L,A)=H(C_*(L,A))$ and $H^*(L,A)=H(C^*(L,A))$ are called the \emph{homotopy multicrossing homology} and \emph{homotopy multicrossing cohomology} of the link $L$ with coefficients in $A$.
\end{definition}

By definition, the multicrossing homology is an invariant of oriented links.
\begin{theorem}[\cite{Nca}]
Multicrossing homologies of isotopic links are isomorphic.
\end{theorem}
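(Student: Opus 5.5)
The statement is essentially formal: the complex $C_*(L,A)$ is built entirely from the pair $(M_L, \partial N(L))$ together with the base sphere $S$ and the orientation of $L$. So the plan is to show that an isotopy of links yields a homeomorphism of this data preserving all the structure used to define the semi-cubical set and the degenerate subset.

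\textbf{Step 1 (ambient isotopy).} Let $L$ and $L'$ be isotopic oriented links. By the isotopy extension theorem there is an ambient isotopy $h_t$ of $\R^3$ with $h_1(L)=L'$. Since $L, L'$ lie in a ball $B$, we may take $h_t$ compactly supported inside a slightly smaller ball, so $h_1$ is the identity on $S=\partial B$. Choose tubular neighbourhoods so that $h_1(N(L))=N(L')$. By uniqueness of tubular neighbourhoods, this can be arranged after a further isotopy, and we may also arrange that $h_1$ sends meridian disks to meridian disks. Then $h=h_1$ restricts to a homeomorphism $M_L\to M_{L'}$. It maps $\partial N(L)$ to $\partial N(L')$, fixes $S$ and $x_0$, and carries meridians to meridians. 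Because the isotopy preserves the orientation of the link, it preserves the orientations of the meridians.

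\textbf{Step 2 (induced map on the semi-cubical set).} Define $h_*\colon\widetilde{\mathdutchcal C}(L,n)\to\widetilde{\mathdutchcal C}(L',n)$ by $\gamma\mapsto(h\circ\gamma_0,\dots,h\circ\gamma_n)$. This is well defined on homotopy classes, and it is bijective with inverse induced by $h^{-1}$. Since $h$ maps $\mu_{i,l}$ to the corresponding $\mu'_{i,l}$ (and likewise for $r$), using that the meridian orientation is preserved, $h_*$ commutes with all face maps $\partial^n_{i,l}$ and $\partial^n_{i,r}$. Hence $h_*$ is an isomorphism of semi-cubical sets.

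\textbf{Step 3 (degenerate elements).} Show $h_*(\widetilde{\mathdutchcal D}_i(L,n))=\widetilde{\mathdutchcal D}_i(L',n)$. If $\delta\subset\partial N(L)$ witnesses degeneracy, then $h\circ\delta\subset\partial N(L')$ meets the image meridians only at its endpoints. Moreover, $h\circ(\gamma_i\delta^{-1})$ is contractible in $M_{L'}$. The reverse inclusion follows by applying the same argument to $h^{-1}$. Consequently $h_*$ induces chain isomorphisms $C_*(L,A)\to C_*(L',A)$, and dually on $C^*$, giving isomorphic homology and cohomology.

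\textbf{Main obstacle.} The delicate point is Step 1: making $h$ respect the chosen tubular neighbourhoods and meridian structure. This includes the requirement that the meridians used in $\widetilde{\mathdutchcal C}$ are exactly the fibres of the normal disk bundle, oriented compatibly with the link orientation. I would handle this by appealing to uniqueness of tubular neighbourhoods up to isotopy and then composing $h$ with a fibre-preserving correction isotopy supported near $L'$. If that proves awkward, the fallback is to note that the construction depends only on the homeomorphism type of the triple $(M_L,\partial N(L),S)$ with its oriented meridian foliation. In that case it suffices to check invariance under each Reidemeister move by a local homeomorphism of link complements, which again preserves the foliation.
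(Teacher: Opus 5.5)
Your proposal is correct and is essentially the paper's own argument: the paper treats the statement as immediate from the definition (referring to earlier work), since $C_*(L,A)$ is built only from $M_L$, the boundary torus $\partial N(L)$ with its oriented meridians, and the sphere $S$. Your Steps 1--3 make explicit that an ambient isotopy gives a meridian-preserving homeomorphism carrying the face maps and the degenerate subsets across. One small detail to tidy: an isotopy between $L,L'\subset B$ may leave $B$, so before invoking isotopy extension to get $h_1=\mathrm{id}$ on $S$, first reparametrize $\R^3$ radially onto the interior of $B$ by a map that is the identity on a smaller ball containing $L\cup L'$.
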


\begin{remark}
    Similarly to the homotopy multicrossing homology of a link, one can define the \emph{isotopy multicrossing homology} $H^{iso}_*(L,A)$ and cohomology $H_{iso}^*(L,A)$ based on the space $\widetilde{\mathcal C}_{iso}(L,n)$ that consists of isotopy classes of tuples $\gamma=(\gamma_0,\dots,\gamma_n)$ of simple curves which satisfy the conditions above and the additional requirement:
\begin{itemize}
    \item the curve $\partial^1_{1,l}\circ\cdots\circ\partial^n_{1,l}(\gamma)$ is the long unknot in $B$.
\end{itemize}
The formula of the differential is the same as above, and the spaces $\widetilde{\mathdutchcal D}^{iso}_i(L,n)$ consist of tuples $\gamma$ for which
there exists a simple curve $\delta\colon[0,1]\to\partial N(L)$ such that $\delta(0)=\gamma_i(0)$, $\delta(1)=\gamma_i(1)$, $\delta\cap\bigcup_{i=1}^n\mu_i=\partial\delta$, and the loop $\gamma_i\delta^{-1}$ bounds a disk in $M_L$.
\end{remark}

\subsection{Positive and negative multicrossing homology}

Consider the inversion map $\gamma\mapsto \bar\gamma$, $\gamma\in \widetilde{\mathdutchcal C}(L,n)$, defined by the formula
\[
\overline{(\gamma_0,\dots,\gamma_n)}=(\gamma_n^{-1},\dots,\gamma^{-1}_0).
\]

From the definition of inversion we get the following properties.
\begin{lemma}\label{lem:multicrossing_inversion}
For any $\gamma\in \widetilde{\mathdutchcal C}(L,n)$
\begin{enumerate}
    \item $\bar{\bar\gamma}=\gamma$;
    \item for any $p\in\{l,r\}$ and $i\in\{1,\dots,n\}$,
$
\overline{\partial^n_{i,p}(\gamma)}=\partial^n_{n+1-i,\bar p}(\bar\gamma)
$,
where $\bar l=r, \bar r=l$;

    \item  $\overline{d(\gamma)}=(-1)^n d(\bar\gamma)$;

    \item if $\gamma\in \widetilde{\mathdutchcal D}_i(L,n)$, $i=1,\dots,n-1$, then $\bar\gamma\in\widetilde{\mathdutchcal D}_{n+2-i}(L,n)$.
\end{enumerate}
\end{lemma}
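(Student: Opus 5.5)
The plan is to check each item directly from the definitions of $\bar\gamma$ and of the face maps $\partial^n_{i,p}$. The idea is to track how inversion reindexes the tuple: the path $\gamma_k$ goes to position $n-k$ as $\gamma_k^{-1}$, and the meridian $\mu_k$ (between $\gamma_{k-1}$ and $\gamma_k$) becomes the $(n+1-k)$-th meridian of $\bar\gamma$. Item 1 is immediate, since $(\gamma_k^{-1})^{-1}=\gamma_k$ and reversing the order twice is the identity.

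For item 2, I would note that the $i$-th meridian $\mu_i$ of $\gamma$ becomes the $(n+1-i)$-th meridian of $\bar\gamma$. The two marked points on it, $\gamma_{i-1}(1)$ and $\gamma_i(0)$, become $\bar\gamma_{n-i}(1)=\gamma_i^{-1}(1)$ and $\bar\gamma_{n+1-i}(0)=\gamma_{i-1}^{-1}(0)$. So the roles of the two endpoints are swapped, and hence the arcs $\mu_{i,l}$ and $\mu_{i,r}$ are exchanged, with orientation reversed. Then
\[
\overline{\gamma_{i-1}\mu_{i,l}\gamma_i}=\gamma_i^{-1}\mu_{i,l}^{-1}\gamma_{i-1}^{-1},
\]
and $\mu_{i,l}^{-1}$ is precisely the arc $(\bar\mu_{n+1-i,r})^{-1}$ used in $\partial_{n+1-i,r}$. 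Similarly, the $r$-face goes to the $l$-face. The remaining paths are inverted and reversed identically on both sides. This proves item 2.

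For item 3, I would apply item 2 termwise:
\[
\overline{d\gamma}=\sum_i(-1)^i\bigl(\partial_{n+1-i,r}\bar\gamma-\partial_{n+1-i,l}\bar\gamma\bigr).
\]
Substituting $j=n+1-i$ gives $(-1)^i=(-1)^{n+1-j}$, and the swap $l\leftrightarrow r$ contributes one more sign. The result is $(-1)^n d(\bar\gamma)$.

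For item 4, suppose $\gamma\in\widetilde{\mathdutchcal D}_i(L,n)$ with witness $\delta$ for $\gamma_i$. Then $\delta^{-1}$ is a curve on $\partial N(L)$ joining the endpoints of $\gamma_i^{-1}$. It meets the meridians only at its endpoints, because the set of meridians is unchanged by inversion, only relabelled. Moreover, the loop $\gamma_i^{-1}\delta$ is contractible, since it is conjugate to the inverse of $\gamma_i\delta^{-1}$. So $\delta^{-1}$ witnesses the $\widetilde{\mathdutchcal D}$-condition for the path $\gamma_i^{-1}$ of $\bar\gamma$.

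The main obstacle is the label attached to this path, which I have not been able to reconcile with the stated index. Under the convention in the definition, where $\widetilde{\mathdutchcal D}_k$ refers to the path $\gamma_k$ lying between $\mu_k$ and $\mu_{k+1}$, the path $\gamma_i^{-1}$ sits at position $n-i$ of $\bar\gamma$. My argument therefore places $\bar\gamma$ in $\widetilde{\mathdutchcal D}_{n-i}(L,n)$. I would first try to reach the stated index $n+2-i$ by a different labelling: for instance, labelling the degenerate path by the meridians it connects, counted in $\bar\gamma$ from the opposite end with a shift. I do not see a labelling that works for every $i$. Under the definition's own convention, the stated index fails even to land in the range $1,\dots,n-1$ when $i=1$ or $i=2$, since it gives $n+1$ or $n$. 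I therefore expect that the index in item 4 should read $n-i$, and the argument above proves that version.
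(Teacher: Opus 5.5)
Your proposal is correct and is exactly the direct check from the definitions that the paper has in mind; the paper gives no argument beyond ``from the definition of inversion.'' In item~2 the precise identification is $\bar\mu_{n+1-i,l}=\mu_{i,r}$ and $\bar\mu_{n+1-i,r}=\mu_{i,l}$ as oriented arcs of the same meridian, so ``with orientation reversed'' is only a slip of wording and your displayed formula is right. You are also right about item~4: $\gamma_i^{-1}$ is the $(n-i)$-th path of $\bar\gamma$, so the correct conclusion is $\bar\gamma\in\widetilde{\mathdutchcal D}_{n-i}(L,n)$, and the printed index $n+2-i$ is a typo; this is harmless, since only $\iota(\widetilde{\mathdutchcal D}(L,n))=\widetilde{\mathdutchcal D}(L,n)$ is used afterwards.
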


For $\gamma\in \widetilde{\mathdutchcal C}(L,n)$, denote $\iota(\gamma)=(-1)^{\frac{n(n+1)}2}\bar\gamma$. Lemma~\ref{lem:multicrossing_inversion} implies immediately the following statements.

\begin{corollary}\label{cor:multicrossing_inversion}
\begin{enumerate}
    \item $\iota^2=id$;
    \item $\iota d=d\iota$;
    \item $\iota(\widetilde{\mathdutchcal D}(L,n))=\widetilde{\mathdutchcal D}(L,n)$.
\end{enumerate}
\end{corollary}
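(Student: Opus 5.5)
We derive the three statements of Corollary~\ref{cor:multicrossing_inversion} directly from Lemma~\ref{lem:multicrossing_inversion}, extending $\gamma\mapsto\bar\gamma$ and $\iota$ linearly to chains. Throughout, $\iota$ on $\widetilde{\mathdutchcal C}(L,n)$ is the bar map multiplied by $\epsilon_n=(-1)^{n(n+1)/2}$, and $\iota$ preserves the degree $n$.

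For the first statement, $\iota^2(\gamma)=\epsilon_n^2\,\bar{\bar\gamma}=\gamma$ by part~1 of the lemma.

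For the second statement, take $\gamma$ of degree $n$, so $d\gamma$ has degree $n-1$. Then
\[
\iota(d\gamma)=\epsilon_{n-1}\overline{d\gamma}=\epsilon_{n-1}(-1)^n d(\bar\gamma)
\]
by part~3 of the lemma. On the other hand,
\[
d(\iota\gamma)=\epsilon_n d(\bar\gamma).
\]
So it suffices to check $\epsilon_{n-1}(-1)^n=\epsilon_n$. This holds because $\frac{n(n+1)}2-\frac{(n-1)n}2=n$.

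For the third statement, take $\gamma\in\widetilde{\mathdutchcal D}_i(L,n)$. Part~4 of the lemma places $\bar\gamma$ in some $\widetilde{\mathdutchcal D}_{j}(L,n)$. The printed index $n+2-i$ looks off by one: reversing the tuple sends $\gamma_i$ to position $n-i$, so one expects $j=n-i$. In either case $j\in\{1,\dots,n-1\}$, since the condition defining $\widetilde{\mathdutchcal D}_i$ only concerns the middle path $\gamma_i$ together with its neighbouring meridians. That condition is symmetric under reversal: if $\delta$ witnesses it for $\gamma$, then $\delta^{-1}$ witnesses it for $\bar\gamma$. Hence $\overline{\widetilde{\mathdutchcal D}(L,n)}\subset\widetilde{\mathdutchcal D}(L,n)$. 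Applying this inclusion to $\bar\gamma$ and using the involutivity from the first statement gives equality, and multiplying by the sign $\epsilon_n$ does not change the subgroup spanned.

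As consequences, $\iota$ descends to $C_*(L,A)$ and commutes with $d$, and dually to $C^*(L,A)$. The only real point of care is the index bookkeeping in part~4, which we have just checked directly; everything else is sign arithmetic.
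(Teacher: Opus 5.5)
Your proposal is correct and follows the paper's route: the paper simply states that the three claims follow immediately from Lemma~\ref{lem:multicrossing_inversion}, and your sign identity $\frac{(n-1)n}{2}+n=\frac{n(n+1)}{2}$ is exactly what turns part~3 of that lemma into $\iota d=d\iota$. You are also right that the index in part~4 of the lemma should read $n-i$ rather than $n+2-i$, since $\gamma_i^{-1}$ sits in position $n-i$ of $\bar\gamma$; your direct check with the witness $\delta^{-1}$ establishes the third claim without relying on the misprinted index.
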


Let $A$ be an abelian group. Then the map $\iota$ defines an involution of the chain complex $C_*(L,A)$. Denote $C^\pm_*(L,A)=C_*(L,A)/(id\mp\iota)$.

\begin{definition}\label{def:positive_multicrossing_homology}
    The homology $H^\pm_*(L,A)$ of the complex $C^\pm_*(L,A)$ is called the \emph{positive} (\emph{negative}) multicrossing homology of $L$ with coefficients in $A$.

    Analogously, one defines the positive multicrossing cohomology $H_+^*(L,A)$ and the negative multicrossing homology $H_-^*(L,A)$ of the link $L$.
\end{definition}

\begin{remark}
    There is a natural projection $H_*(L,A)\to H^\pm_*(L,A)$.
\end{remark}

\subsection{Secant class of knot}\label{subsect:secant_classes}

Fix a direction $\nu\in S^2$. Denote $\mathcal P_2=\mathcal P_\nu$, $\mathcal P_3=\mathcal P^\bot_\nu$, $\mathcal P_4=\mathcal P^o$.

Let $L$ be a link in $\R^3$. Define the sets of \emph{admissible $n$-secants}, $n=2,3,4$, by the formula $\mathcal S^{adm}_n(L)=\mathcal S_n(L)\cap\mathcal P_n$. For $n=2,3,4$, we say that $L$ is \emph{$n$-generic} if
\begin{itemize}
    \item all $n$-secants of $L$ in $\mathcal P_n$ are transversal;
    \item all $n$-secants in $\mathcal S^{adm}_n(L)$ have non zero signs;
    \item there are no $(n+1)$-secants of $L$ in $\mathcal P_n$.
\end{itemize}

\begin{definition}\label{def:n-secant_class}
    Let $n\in\{2,3,4\}$ and a link $L$ be $n$-generic. The sum
\[
\sigma_n(L)=\sum_{s\in \mathcal S^{adm}_n(L)}\sgn(s)\cdot s
\]
is called the \emph{$n$-secant class} of the link $L$. The sum $\sigma_n(L)$ is considered as an element of $H_2(L,\Z)$ for $n=2$, an element of $H_3^-(L,\Z)$ for $n=3$, and an element of $H_4^+(L,\Z)$ for $n=4$.
\end{definition}

Let us formulate the main theorem of the paper.

\begin{theorem}\label{thm:secant_class_invariance}
Let $L$ be an oriented link. Then
\begin{enumerate}
    \item $\sigma_2(L)$ is an invariant class in $H_2(L,\Z)$;
    \item $\sigma_3(L)$ is an invariant class in $H^-_3(L,\Z)$;
    \item $\sigma_4(L)$ is an invariant class in $H^+_4(L,\Z)$.
\end{enumerate}
The invariance of $\sigma_n(L)$ means that for any isotopy $f\colon L\to L'$,
\[
\sigma_n(L')=f_*(\sigma_n(L))\in H_n(L',\Z),\quad n=2,3,4.
\]
\end{theorem}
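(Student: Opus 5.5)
The proof splits into two parts: showing that $\sigma_n(L)$ is a cycle in the corresponding complex, and showing that its homology class does not change along a generic isotopy. For the cycle property I will compute $d\sigma_n(L)$ geometrically. For a transversal $n$-secant $s$ on arcs $\alpha_1,\dots,\alpha_n$, the faces $\partial^n_{i,l}(s)$ and $\partial^n_{i,r}(s)$ are the $(n-1)$-tuples obtained by pushing the line off the $i$-th arc to the left or to the right along the meridian $\mu_i$. Fix an arc $\alpha_j$ and consider the one-parameter family of admissible $(n-1)$-secants that meet every arc except $\alpha_j$. This family is an $(n-1)$-secant curve in $\mathcal P_n$: a curve in the $2$-dimensional $\mathcal P_\nu$ for $n=2$, a curve of horizontal $2$-secants inside a $3$-dimensional space for $n=3$, and the trisecant curves $\alpha_{12\underline 3}$ of Proposition~\ref{prop:4secant_sign_geometry} for $n=4$. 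Along such a curve the element of $\widetilde{\mathdutchcal C}(L,n-1)$ stays constant up to homotopy. It can change only at an endpoint, where the curve passes through an $n$-secant and a new intersection point with a meridian appears. The curve can also end at a tangency or an extremum, but there the two pieces belong to $\widetilde{\mathdutchcal D}$ or coincide. So each $(n-1)$-class appears in $d\sigma_n$ exactly twice, once at each end of the curve through it. Propositions~\ref{prop:3secant_sign_geometry} and~\ref{prop:4secant_sign_geometry}, together with Remark~\ref{rem:4secant_geom}, are exactly what is needed to check that the two end contributions carry opposite signs, i.e. that $\sgn(s)(-1)^i$ times the side $l$ or $r$ agrees with the orientation of the curve at the end. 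For $n=2$ there is nothing to check, since the faces are 1-secant classes (meridian paths) and the terms cancel pairwise because $\partial^2_{1,l}\gamma=\partial^2_{2,l}\gamma$ up to $\widetilde{\mathdutchcal D}$-relations, which is the classical Wirtinger computation.

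For $n=3$ and $n=4$ the reversed secant $-s$ is also admissible. Corollaries~\ref{cor:3secant_inverse_sign} and~\ref{cor:4secant_inverse_sign} give $\sgn(-s)=-\sgn(s)$ and $\sgn(-s)=\sgn(s)$ respectively, while the map $s\mapsto -s$ on multicrossings is $\bar\gamma=(-1)^{n(n+1)/2}\iota(\gamma)$. For $n=3$ we have $(-1)^6=1$, so $\sgn(-s)(-s)=-\sgn(s)\iota(s)$. For $n=4$ we have $(-1)^{10}=1$, so $\sgn(-s)(-s)=\sgn(s)\iota(s)$. Hence the sum is well defined, and by Corollary~\ref{cor:multicrossing_inversion} it is a cycle, only after passing to $C^-_3$ and $C^+_4$ respectively. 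This explains the choice of $H^-_3$ and $H^+_4$. It also takes care of the boundary terms in which the removed point lies on an arc met twice, because opposite orientations of the same secant are identified.

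For invariance I will take a generic isotopy $L_t$ and use the lists of codimension-one events given for each projector in Section~\ref{subsect:projectors}. Away from these events, the admissible $n$-secants move continuously with constant sign, and $f_*$ transports their multicrossing classes, so $\sigma_n$ is unchanged. At each event I will show that $\sigma_n(L_{t+\varepsilon})-\sigma_n(L_{t-\varepsilon})$ is a boundary plus elements of $\widetilde{\mathdutchcal D}$. For a secant with zero sign (Proposition~\ref{prop:3secant_sign_geometry}(2), or zero $\Delta_4$), two $n$-secants with opposite signs and the same class are born or die, so they cancel. For a tangency or an extremum event, the secants involved have a consecutive pair of points joined by a short arc on $\partial N$, so they lie in $\widetilde{\mathdutchcal D}$ and vanish. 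The essential event is the transversal $(n+1)$-secant. For $n=2$ this is Reidemeister III, for $n=3$ a horizontal $4$-secant, and for $n=4$ a $5$-secant $S$. Near it, the $n$-secants near $S$ obtained by dropping one of the $n+1$ points change. The change should equal $\pm d(S)$, where $S$ is viewed as an element of $C_{n+1}$. To check this I plan to compare the local model: the $2(n+1)$ faces $\partial_{i,l}S$, $\partial_{i,r}S$ correspond to the $n$-secants passing near $S$ on either side of the $i$-th arc before and after the move. The sign identity needed is a cocycle-type relation among $\Delta_n$ of the sub-tuples of $n+1$ collinear arcs, for which the permutation rules in Propositions~\ref{prop:3secant_delta} and~\ref{prop:4secant_delta} supply the $(-1)^i$.

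The main obstacle is this last step for $n=4$: showing that across a $5$-secant the alternating sum of signs $\sgn\Delta_4$ of the five sub-quadrisecants, together with which side each one lies on, matches $d(S)$. My approach is to reduce to the far-sheaf formula~\eqref{far_secant_sheaf} by moving $O$ far away and linearizing the arcs, and then to do a direct but careful case analysis on the order of the five points. The analogous $n=3$ check through Proposition~\ref{prop:3secant_sign_geometry}(1), using oriented triangles in nearby horizontal planes, should serve as the model.
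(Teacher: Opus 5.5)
Your invariance argument follows the paper's case structure: zero-sign secants cancel in pairs, tangency-type secants lie in $\widetilde{\mathdutchcal D}$, and an $(n+1)$-secant $S$ changes $\sigma_n$ by $\pm dS$. For the cycle property you replace the paper's sweep chains by a direct count of endpoints of $(n-1)$-secant curves, but that count is missing its essential ingredient.

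\textbf{The curve orientation is never defined.} You require each segment to appear at its two ends with coefficients ``agreeing with the orientation of the curve'', but you never say what that orientation is, and no parametrization-based choice works globally. Proposition~\ref{prop:3secant_sign_geometry} is stated for upward strands, and Proposition~\ref{prop:4secant_sign_geometry} for $\alpha_{12\underline 3}$ parametrized by $\alpha_3$. A segment of horizontal $2$-secants can run from one trisecant level over a maximum and back down to the next. A segment of trisecants can fold with respect to its middle point at an antipodal second Reidemeister move. At such places the naive direction (upward, resp.\ along the middle arc) reverses while the class is unchanged, so the two ends cancel only because of a sign that flips at the same place.

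The paper's chains supply exactly this sign. $\delta(z_0)$ weights $s_{ij}$ by $\epsilon'=(\dot\alpha_i,\nu)(\dot\alpha_j,\nu)$, which flips at extrema. $\delta(p)$ weights trisecants with middle point $p$ by $\sgn(\alpha_1,\alpha_3)$, which flips exactly at the folds, where $(\dot\alpha_1,\dot\alpha_3,\kappa)=0$. At a $4$-secant one also needs Remark~\ref{rem:4secant_geom}(2) to convert the orientation of $\alpha_{12\underline i}$ into the middle-point orientation. Your picture can be completed, e.g.\ with the level-set orientation of $\{z(p)=z(q)\}\subset L\times L$, which up to a global sign is the upward direction times $\sgn\epsilon'$. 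But this weight, and checking it against $\sgn(s)(-1)^i$ at every $n$-secant, is the actual content of the proof.

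Your $n=2$ remark is also wrong. The identity $\partial^2_{1,l}\gamma=\partial^2_{2,l}\gamma$ holds only for $\gamma\in\widetilde{\mathdutchcal D}_1(L,2)$. At a crossing the faces are $1$-secants of different semi-arcs, and they cancel as outgoing minus incoming semi-arcs for the link orientation.

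\textbf{The planned $5$-secant computation fails.} Moving $O$ changes nothing, since $or(\alpha_{12\underline3},\alpha_{12\underline4})$ does not depend on $\Pi$. Formula~\eqref{far_secant_sheaf} holds only when $\alpha_1,\alpha_2$ lie far along the secant from $\alpha_3,\alpha_4$ relative to their mutual distances, so that $\alpha_{12\underline i}$ can be replaced by $\alpha_i$. It is false in general. In the notation of the proof of Proposition~\ref{prop:4secant_delta}, using $p_2=p_1+p_3$ and $q_2=q_1+q_3$, one gets $\Delta_4=p_1q_3-p_3q_1$. For equally spaced $z_1<z_2<z_3<z_4$ this is $3p_1-p_3$, whereas~\eqref{far_secant_sheaf} predicts $\sgn p_1$. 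The choice $p_1=1$, $p_3=5$ is realizable, e.g.\ by $\dot\alpha_3=\dot\alpha_1+2\dot\alpha_2$, $\dot\alpha_4=-3\dot\alpha_1-5\dot\alpha_2$ with $(\kappa,\dot\alpha_1,\dot\alpha_2)=1$.

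Moreover, the sub-quadrisecants of a $5$-secant cannot all be in the far regime. The quadruple $\{1,2,3,4\}$ needs the gap $z_3-z_2$ to dominate $z_4-z_3$, while $\{2,3,4,5\}$ needs the reverse. So a case analysis built on~\eqref{far_secant_sheaf} would produce wrong signs.

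The paper instead uses Proposition~\ref{prop:4secant_sign_geometry} and Remark~\ref{rem:4secant_geom}(1) directly. In a plane transversal to $S$, the curves $\alpha_{12\underline i}$, $i=3,4,5$, undergo a third Reidemeister move, which gives the signs of $\partial_{i,u}S$ for $i=3,4,5$. The same picture for $-S$ gives $\partial_{1,u}S$ and $\partial_{2,u}S$. The shared face $\partial_{3,\cdot}S$ forces $\sgn(\alpha_1,\alpha_2)=\sgn(\alpha_4,\alpha_5)$, so the change is $\sgn(\alpha_1,\alpha_2)\,dS$.
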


Theorem~\ref{thm:secant_class_invariance} will be proved in Section~\ref{sect:proof}.

\begin{remark}\label{rem:isotopic_secant_classes}
    The same formula~\eqref{eq:secant_class} defines invariant secant classes in the isotopy multicrossing homologies $H^{iso}_2(L,\Z)$, $H^{iso-}_3(L,\Z)$ and $H^{iso+}_4(L,\Z)$. These classes are universal for link invariants which represent the sum of labels of $n$-secants.
\end{remark}

\begin{corollary}\label{cor:secant_cocycle_invariant}
Let $A$ be an abelian group. Then
\begin{enumerate}
    \item for any multicrossing $2$-cocycle $\theta$ with coefficients in $A$ the value $I_\theta(L)=\theta(\sigma_2(L))$ is invariant;
    \item for any negative multicrossing $3$-cocycle $\theta$ with coefficients in $A$ the value $I_\theta(L)=\theta(\sigma_3(L))$ is invariant;
    \item for any positive multicrossing $4$-cocycle $\theta$ with coefficients in $A$ the value $I_\theta(L)=\theta(\sigma_4(L))$ is invariant.
\end{enumerate}
\end{corollary}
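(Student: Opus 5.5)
The corollary follows from Theorem~\ref{thm:secant_class_invariance} by the usual pairing of homology with cohomology. I would first check that a cocycle evaluates on homology classes at all. A multicrossing $n$-cocycle $\theta\in C^n(L,A)$ is a homomorphism $\widetilde{\mathdutchcal C}(L,n)\to A$ vanishing on $\widetilde{\mathdutchcal D}(L,n)$ with $d\theta=0$. It therefore descends to $C_n(L,\Z)$ and vanishes on boundaries, since $\theta(d c)=(d\theta)(c)=0$. Hence it induces a well-defined homomorphism $H_n(L,\Z)\to A$.

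For the signed versions I would use Corollary~\ref{cor:multicrossing_inversion}. A negative (positive) cocycle is one factoring through $C^\mp_*(L,\Z)=C_*(L,\Z)/(id\pm\iota)$, with the sign conventions of Definition~\ref{def:positive_multicrossing_homology}. Concretely, $\theta\circ\iota=-\theta$ for $H^-$ and $\theta\circ\iota=\theta$ for $H^+$. Since $\iota$ commutes with $d$ and preserves $\widetilde{\mathdutchcal D}$, such $\theta$ descends to a map $H^\mp_n(L,\Z)\to A$. So $\theta(\sigma_3(L))$ and $\theta(\sigma_4(L))$ are well defined on the classes in $H^-_3$ and $H^+_4$. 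It is worth confirming that the ambiguity of representing a secant $s$ by $-s$ is killed here. By Corollaries~\ref{cor:3secant_inverse_sign} and~\ref{cor:4secant_inverse_sign}, $\sgn(-s)s'$ with $s'=\overline{s}$ matches the sign convention of the quotient.

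Next comes naturality under an isotopy $f\colon L\to L'$. The isotopy induces a homeomorphism of complements fixing $S$, which gives a chain isomorphism $f_*\colon C_*(L)\to C_*(L')$ commuting with $\iota$. By Theorem~\ref{thm:secant_class_invariance}, $\sigma_n(L')=f_*\sigma_n(L)$. A cocycle $\theta$ on $L'$ pulls back to $f^*\theta=\theta\circ f_*$, and $\theta(\sigma_n(L'))=(f^*\theta)(\sigma_n(L))$. The invariance statement should be read as follows: $I_\theta$ is invariant for cocycles transported along the isotopy. Equivalently, the value depends only on the pair (link, cocycle) up to the natural identification. It is then constant along $n$-generic representatives, which is the content of the claim.

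The main obstacle, mild as it is, is bookkeeping. One must check that the sign conventions of ``positive/negative cocycle'' match the quotients $C^\pm$, so that the evaluation is well defined on $H^\mp$. Everything substantive is contained in Theorem~\ref{thm:secant_class_invariance}.
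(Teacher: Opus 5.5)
Your proposal is correct and takes the same route as the paper, which treats this corollary as an immediate consequence of Theorem~\ref{thm:secant_class_invariance}: a cocycle vanishing on $\widetilde{\mathdutchcal D}$ and on boundaries induces a homomorphism on $H_n$ (respectively on $H^\pm_n$ when $\theta\circ\iota=\pm\theta$), and naturality $\theta(f_*\sigma_n(L))=(f^*\theta)(\sigma_n(L))$ gives invariance for cocycles transported along the isotopy. Your explicit check that the sign conventions match the quotients $C^\mp_*=C_*/(id\pm\iota)$, and your remark on how to read ``invariant'' for cocycles transported along the isotopy, are bookkeeping that the paper leaves implicit.
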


Let us see how the secant classes change under symmetries of the knot.

\begin{proposition}\label{prop:knot_symmetries_secant_class}
    Let $K$ be  an oriented knot, $-K$ the reversed knot, and $\bar K$ the mirror knot. Then
\begin{enumerate}
    \item the multicrossing complexes for $-K$ and $\bar K$ are identified with the complex $(C_*(K,A), -d)$;
    \item the maps $\tau_n=(-1)^n id\colon C_n(K,A)\to C_n(K,A)$ define an isomorphism $\tau_*$ between $H_*(K,A)$ and $H_*(-K,A)=H_*(\bar K,A)$;
    \item for $A=\Z$, $\tau_*(\sigma_n(K))=\sigma_n(-K)=\sigma_n(\bar K)$, $n=3,4$, but $\tau_*(\sigma_2(K))=\sigma_2(-K)=-\sigma_2(\bar K)$.
\end{enumerate}
\end{proposition}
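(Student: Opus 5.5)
The plan is to realize both $-K$ and $\bar K$ through concrete geometric operations, check how each acts on the generators $\gamma\in\widetilde{\mathdutchcal C}(K,n)$ and on the faces $\partial^n_{i,p}$, and then follow individual admissible secants and their signs.

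\emph{Step 1: identification of the complexes.} For $-K$ the complement $M_K$, the ball $B$, the basepoint and the meridians are unchanged. Only the orientation of each meridian $\mu_i$ is reversed, since meridian orientation is determined by the knot orientation. Hence $\mu_{i,l}$ and $\mu_{i,r}$ are interchanged, and $\widetilde{\mathdutchcal C}(-K,n)=\widetilde{\mathdutchcal C}(K,n)$ with $\partial^n_{i,l}$ and $\partial^n_{i,r}$ swapped. The subsets $\widetilde{\mathdutchcal D}$ are untouched. Since $d=\sum(-1)^i(\partial_{i,l}-\partial_{i,r})$, this swap replaces $d$ by $-d$. For $\bar K$ we use the reflection $R$ of $\R^3$ in a plane containing $\nu$. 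It maps $M_K$ to $M_{\bar K}$ and preserves the orientation of $\bar K$ inherited from $K$, but reverses the ambient orientation, so the induced meridian orientation is reversed again. Transporting tuples by $R$ therefore identifies $C_*(\bar K)$ with $(C_*(K),-d)$. The involution $\iota$ commutes with both identifications, so the same holds for $C^\pm_*$.

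\emph{Step 2: the isomorphism $\tau$.} We need $\tau d=(-d)\tau$ on $C_n\to C_{n-1}$. This holds because $(-1)^{n-1}d=-(-1)^n d$, so $\tau_*$ is an isomorphism $H_*(K,A)\to H_*(-K,A)=H_*(\bar K,A)$. It also commutes with $\iota$, which gives the statement for $H^\pm$ as well.

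\emph{Step 3: the secant classes.} For $-K$ the admissible secants are the same lines. Reversing all arcs $\alpha_i\mapsto-\alpha_i$ multiplies $\Delta_n$ by $(-1)^n$, using Proposition~\ref{prop:2secant_delta}(2), Proposition~\ref{prop:3secant_delta}(2) and Proposition~\ref{prop:4secant_delta}(2) once per arc. Hence $\sigma_n(-K)=(-1)^n\sigma_n(K)=\tau\sigma_n(K)$ for all $n$.

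For $\bar K$ we take $R$ to be the reflection in a plane containing $\nu$, so that $R$ preserves $\mathcal P_2$, $\mathcal P_3$ and $\mathcal P_4$. Each admissible secant $s$ of $K$ maps to an admissible secant $Rs$ of $\bar K$, with the arcs $R\alpha_i$ in the same order. Every triple product changes sign under $R$, while $(\dot\alpha,\nu)$ is preserved because $R\nu=\nu$. From the formulas this gives:
\begin{itemize}
\item $\Delta_2$ contains one triple product, so it changes sign;
\item each term of $\Delta_3$ contains one triple product, so $\Delta_3$ changes sign;
\item each entry of the determinant $\Delta_4$ is a triple product, and the determinant is quadratic in the entries, so $\Delta_4$ is preserved.
\end{itemize}
Thus $\sigma_2(\bar K)=-\sigma_2(K)$, $\sigma_3(\bar K)=-\sigma_3(K)=\tau\sigma_3(K)$ and $\sigma_4(\bar K)=\sigma_4(K)=\tau\sigma_4(K)$. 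This gives $\tau_*\sigma_n(K)=\sigma_n(-K)=\sigma_n(\bar K)$ for $n=3,4$. For $n=2$ we get $\tau_*\sigma_2(K)=\sigma_2(K)=\sigma_2(-K)=-\sigma_2(\bar K)$, as claimed.

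\emph{Main obstacle.} The main difficulty is the careful bookkeeping in Step 1 for the mirror. One must check that the reflection genuinely reverses meridian orientation, so that it interchanges the $l$ and $r$ halves $\mu_{i,l}$ and $\mu_{i,r}$. One must also check that identifying the generators of $K$ and $\bar K$ via $R$ is compatible with the identification used for $-K$, so that "$H_*(-K)=H_*(\bar K)$" refers to a single consistent identification under which the secant classes are compared. I would settle this by describing $\mu_{i,l}$ as the arc traversed positively with respect to the linking orientation, and then tracking it under $R$.
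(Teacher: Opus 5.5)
Your proposal is correct and follows the paper's own argument: both complexes are identified with $(C_*(K,A),-d)$ by swapping $\partial_{i,l}$ and $\partial_{i,r}$, $\tau$ serves as the chain isomorphism, and the signs are tracked through the multilinearity of $\Delta_n$ in the $\dot\alpha_i$ and the fact that triple products, but not scalar products, change sign under orientation reversal. Your explicit choice of a reflection in a plane containing $\nu$ just makes precise what the paper's ``changing the orientation of $\R^3$'' tacitly needs, namely that $\mathcal P_2$, $\mathcal P_3$ and the factors $(\dot\alpha_i,\nu)$ in $\Delta_3$ are preserved.
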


\begin{proof}
    1. The set of $n$-secants of $-K$ coincides with those of $K$. Hence, one can identify the chain spaces $C_n(K,A)$ and $C_n(-K,A)$. But in the complex for $-K$, the maps $\partial_{i,l}$ and $\partial_{i,r}$ swap places. Hence, the differential $d$ becomes $-d$.

    One gets the knot $\bar K$ by changing the orientation of $\R^3$. The rest arguments coincide with those for $-K$.

    2. The second statement is an immediate consequence of the first statement.

    3. Let us look how the secant signs change after passing to $-K$. Since the expressions $\Delta_n$ are linear in $\dot\alpha_i$ (see Section~\ref{subsect:secant_sign}), the sign of an $n$-secant $s$ of $K$ considered as a secant of $-K$ is equal to $(-1)^n\sgn(s)$. Hence,
\[
\tau_*(\sigma_n(K))=(-1)^n\sum_{s\in \mathcal S^{adm}_n(K)}\sgn(s)s=
\sum_{s\in \mathcal S^{adm}_n(-K)}\sgn_{-K}(s)s=\sigma_n(-K),
\]
$n=2,3,4$.

Now, look at the signs of secants of $\bar K$. Changing the orientation of $\R^3$ preserves scalar products of vectors but changes the sign of triple products. By formulas for $\Delta_n$ in Section~\ref{subsect:secant_sign}, we get $\sgn_{\bar K}(s)=-\sgn_K(s)$ for $2$-secants and $3$-secants, and $\sgn_{\bar K}(s)=\sgn_K(s)$ for $4$-secants. This observation implies the third statement for $\bar K$.
\end{proof}

\subsection{Component cocycle invariants}\label{subsect:component_cocycles}

Let us give some examples of secant cocycle invariants. Consider a link $L=K_1\cup\cdots\cup K_m$ with $m$ enumerated component.

For an $n$-secant $s$ on arcs $\alpha_1(t),\dots,\alpha_n(t)$ of $L$, its \emph{component type} $c(s)$ is equal to $(i_1,\dots,i_n)$, $i_k\in\{1,\dots,m\}$, $k=1,\dots,n$, if $\alpha_k(t)\subset K_{i_k}$ for any $k=1,\dots,n$. Consider functions $\theta_{i_1\dots i_n}\colon\mathcal S_n(L)\to\Z$ such that
\[
\theta_{i_1\dots i_n}(s)=\left\{\begin{array}{cl}
    1, &  c(s)=(i_1,\dots,i_n)\\
    0, & \mbox{otherwise}.
\end{array}\right.
\]

\begin{proposition}\label{prop:component_secant_cocycles}
\begin{enumerate}
    \item Let $i_1\ne i_2$. Then $\theta_{i_1i_2}$ is a multicrossing $2$-cocycle.
    \item Let $i_1\ne i_2\ne i_3$. Then $\theta^-_{i_1 i_2 i_3}=\theta_{i_1i_2i_3}-\theta_{i_3i_2i_1}$ is a negative multicrossing $3$-cocycle.
    \item Let $i_1\ne i_2$. Then $\theta_{i_1i_2i_1}$ is a negative multicrossing $3$-cocycle with coefficients in $\Z_2$.
    \item Let $i_1\ne i_2\ne i_3\ne i_4$. Then $\theta^+_{i_1 i_2 i_3 i_4}=\theta_{i_1 i_2 i_3 i_4}+\theta_{i_4 i_3 i_2 i_1}$ is a positive multicrossing $4$-cocycle.
\end{enumerate}
\end{proposition}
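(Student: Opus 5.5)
Each $\theta$ is extended from secants to all of $\widetilde{\mathdutchcal C}(L,n)$: a multicrossing $\gamma=(\gamma_0,\dots,\gamma_n)$ determines its sequence of meridians $\mu_1,\dots,\mu_n$. Each meridian lies on a definite component of $L$, so the component type $c(\gamma)=(i_1,\dots,i_n)$ is defined, and we set $\theta_{i_1\dots i_n}(\gamma)=1$ if $c(\gamma)=(i_1,\dots,i_n)$ and $0$ otherwise. For each item there are three things to check: $\theta$ vanishes on $\widetilde{\mathdutchcal D}(L,n)$; $\theta$ is compatible with the involution $\iota$ (invariant for the positive complex, anti-invariant for the negative one); and $d\theta=0$, i.e. $\theta(d\gamma)=0$ for every $\gamma\in\widetilde{\mathdutchcal C}(L,n+1)$.

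\emph{Vanishing on $\widetilde{\mathdutchcal D}$.} If $\gamma\in\widetilde{\mathdutchcal D}_i(L,n)$, then $\gamma_i$ is homotopic to a curve $\delta$ in $\partial N(L)$ running between $\mu_i$ and $\mu_{i+1}$. Hence these two meridians lie on the same component, so $i_i=i_{i+1}$. The hypotheses $i_1\ne i_2$, $i_1\ne i_2\ne i_3$, etc. (understood as consecutive indices differing) therefore give $\theta=0$ on $\widetilde{\mathdutchcal D}$.

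\emph{Compatibility with $\iota$.} Inversion reverses the order of the meridians, so $c(\bar\gamma)$ is $c(\gamma)$ reversed, and $\theta_{i_1\dots i_n}(\bar\gamma)=\theta_{i_n\dots i_1}(\gamma)$. The sign in $\iota$ is $(-1)^{n(n+1)/2}$, which equals $-1$ for $n=3$ and $+1$ for $n=4$. Consequently $\theta_{i_1i_2i_3}-\theta_{i_3i_2i_1}$ satisfies $\theta\circ\iota=\theta$; I expect this to be exactly the condition for it to descend to the quotient $C^-=C/(id+\iota)$, and I will check the convention against the paper's. Likewise $\theta_{i_1i_2i_3i_4}+\theta_{i_4i_3i_2i_1}$ is $\iota$-compatible for $C^+$. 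The $\Z_2$ case $\theta_{i_1i_2i_1}$ is self-reverse, so $\theta\circ\iota=-\theta=\theta$ mod $2$. For $n=2$ no symmetry is required.

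\emph{Cocycle condition.} This is the main step. For $\gamma$ of degree $n+1$, both faces $\partial_{i,l}\gamma$ and $\partial_{i,r}\gamma$ delete the meridian $\mu_i$ and keep all other meridians. They therefore have the same component type, namely $c(\gamma)$ with the $i$-th entry removed. Hence $\theta(\partial_{i,l}\gamma)-\theta(\partial_{i,r}\gamma)=0$ for every $i$, so $\theta(d\gamma)=0$ termwise. The only subtlety, and the point I would check carefully, is that the face maps really preserve the components of the remaining meridians. This holds because concatenation $\gamma_{i-1}\mu_{i,p}\gamma_i$ changes only the paths and never the meridians $\mu_j$, $j\ne i$. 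With this, all four items follow, the $\Z_2$ coefficients in item 3 being needed only for the $\iota$-compatibility.
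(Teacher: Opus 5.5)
Your proposal is correct and follows the paper's proof: $\theta$ vanishes on $\widetilde{\mathdutchcal D}$ because consecutive component indices differ, $\theta(\partial_{i,l}\gamma)=\theta(\partial_{i,r}\gamma)$ because both faces delete the same meridian, and inversion reverses the component type. Extending $\theta$ to all multicrossings via their meridians is the right way to make this precise.

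One bookkeeping fix is needed in the $n=3$ case. There $(-1)^{n(n+1)/2}=(-1)^6=+1$, so $\iota(\gamma)=\bar\gamma$. Descending to $C^-=C/(id+\iota)$ requires $\theta\circ\iota=-\theta$, as you said in your first paragraph, i.e.\ $\theta(\bar\gamma)=-\theta(\gamma)$. That is exactly what $\theta_{i_1i_2i_3}-\theta_{i_3i_2i_1}$ satisfies, so your two slips (the sign of $\iota$ and the stated descent condition) cancel and the conclusion stands.
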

\begin{proof}
Let us prove the third statement. Denote $\theta=\theta_{i_1i_2i_1}$. The condition $i_1\ne i_2$ implies that $\theta(s)=0$ for any $3$-secant $s\in\tilde{\mathdutchcal D}(L,3)$.

For any $4$-secant $s$ and $i\in\{1,2,3,4\}$, $\theta(\partial_{i,l}(s))=\theta(\partial_{i,r}(s))$. Hence $\theta(d(s))=0$, and $\theta$ is a cocycle.

For any $3$-secant $s$, the component type $c(-s)$ is reverse to $c(s)$. Since $(i_1,i_2,i_1)$ is palindromic, $\theta(-s)=\theta(s)\equiv-\theta(s)\pmod 2$. Hence, $\theta$ is a negative cocycle with coefficients in $\Z_2$.

The other statements are proved analogously.
\end{proof}

Denote $I^\epsilon_{i_1\dots i_n}=I_{\theta^\epsilon_{i_1\dots i_n}}$. By Corollary~\ref{cor:secant_cocycle_invariant} and Proposition~\ref{prop:component_secant_cocycles} the functions $I_{i_1i_2}$, $I^-_{i_1i_2i_3}$, $I_{i_1i_2i_1}\pmod 2$, $I^+_{i_1i_2i_3i_4}$, $i_1\ne i_2\ne i_3\ne i_4$, are link invariants. Let us calculate these invariants.

\begin{proposition}\label{prop:component_cocycle_ij}
 For any link $L$ and $i\ne j$,  $I_{ij}(L)=lk_{ij}(L)$ where $lk_{ij}(L)=lk(K_{i},K_{j})$ is the linking number of the $i$-th and $j$-th components.
\end{proposition}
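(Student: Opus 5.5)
We compute $I_{ij}(L)=\theta_{ij}(\sigma_2(L))$ directly from Definition~\ref{def:n-secant_class}: it equals $\sum \sgn(s)$ over all admissible (vertical, i.e. parallel to $\nu$) transversal $2$-secants $s$ whose first arc lies on $K_i$ and whose second arc lies on $K_j$. Each such secant projects to a crossing of the diagram of $L$ on a plane orthogonal to $\nu$. Because the secant is oriented along $\nu$, the ordering of its two arcs fixes which strand is the first point met and which is the second. So $I_{ij}(L)$ counts exactly those crossings between $K_i$ and $K_j$ at which $K_i$ lies on one fixed side (say, under, if we look along $-\nu$), each weighted by $\sgn(s)$. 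Nothing needs to be verified regarding invariance, since the statement only requires evaluating a known invariant.

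First, we will check that $\sgn(s)$ from Definition~\ref{def:2secant_sign} coincides with the usual crossing sign, possibly up to a global convention sign. Take $\nu$ as the $z$-axis and let the secant meet $\alpha_1$ at height $z_1$ and $\alpha_2$ at height $z_2$, with $z_1<z_2$. Then $\alpha_1(0)-\alpha_2(0)=(z_1-z_2)\nu$, which gives $\Delta_2=(z_1-z_2)(\dot\alpha_1,\dot\alpha_2,\nu)$. Consequently $\sgn(s)=-\sgn(\dot\alpha_1,\dot\alpha_2,\nu)$. The triple product is the orientation of the projected tangent vectors (under strand, over strand), and that orientation is precisely the standard local writhe of the crossing, with a fixed sign determined by conventions. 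As a sanity check, Corollary~\ref{cor:2secant_inverse_sign} and the symmetry in Proposition~\ref{prop:2secant_delta} show that the sign does not depend on which strand is over, which is exactly what one expects of a writhe.

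Second, we will invoke the classical fact that the linking number can be obtained by counting only the crossings at which $K_i$ passes under $K_j$, weighted by crossing sign. This equals $lk(K_i,K_j)$; it is the standard ``half-count'' formula, and it follows from the fact that the signed count of crossings where $K_j$ lies under $K_i$ gives the same number. Combining this with the first step yields $I_{ij}(L)=\pm lk_{ij}(L)$, the sign being global and fixed by conventions.

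The main obstacle is fixing the global sign so that it is $+1$ and not $-1$, given the paper's orientation conventions (the ordering along $s$ and the orientation of the triple $(\dot\alpha_1,\dot\alpha_2,\alpha_1-\alpha_2)$). To settle it we will test a single positive Hopf link. Alternatively, we can note that the sign depends continuously on the conventions and is therefore constant, and then match it against one explicit crossing computation.
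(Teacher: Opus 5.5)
Your proposal is correct and is essentially the paper's argument: the admissible $2$-secants of component type $(i,j)$ are exactly the crossings where $K_i$ passes under $K_j$ (viewed from the tip of $\nu$), and their signed count is $lk_{ij}$; the paper simply treats $\sgn(s)$ as the crossing sign without comment. The global sign you left open is already $+1$ by your own formula: with $\alpha_2$ the upper strand, $\sgn(s)=\sgn(\dot\alpha_2,\dot\alpha_1,\nu)=\sgn\det(\text{over},\text{under})$ in the projection, and this is positive for the standard right-handed crossing, so neither the Hopf-link test nor the ``continuity in conventions'' argument is needed.
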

\begin{proof}
    The admissible $2$-secants in $\mathcal S_2^{adm}(L)$ correspond to the crossings of $L$. Then by definition
\[
    I_{ij}(L)=\sum_{s\in\mathcal S^{adm}_2(L)}\sgn(s)\theta_{ij}(s)=
    \sum_{s: K_i{\scriptsize\mbox{ under }} K_j}\sgn(s)=lk(K_i,K_j).
\]
\end{proof}

\begin{proposition}\label{prop:component_cocycle_ijk}
 Let $i\ne j\ne k\ne i$. Then for any link $L$,
$I^-_{ijk}(L)=0$ and $I_{iji}(L)=lk_{ij}(L)\pmod 2$.
\end{proposition}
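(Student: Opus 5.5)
Both quantities are link invariants by Corollary~\ref{cor:secant_cocycle_invariant} and Proposition~\ref{prop:component_secant_cocycles}. So I would compute them with a standard unlinking argument: evaluate on a split model, then track the change under crossing changes. Assume $\nu$ points along $Oz$, so admissible $3$-secants are horizontal.

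\textbf{Step 1: split models.} Let $L_0$ be a link whose components lie in pairwise disjoint horizontal slabs $a_r<z<b_r$. Then every horizontal plane meets at most one component. Hence every admissible trisecant has constant component type $(r,r,r)$, and $\theta^-_{ijk}$ and $\theta_{iji}$ vanish on all of them. This gives $I^-_{ijk}(L_0)=0$ and $I_{iji}(L_0)=0$. Any link can be reduced to such a stacked split link by crossing changes between distinct components together with isotopies. Crossing changes inside one component are also allowed, since they do not affect mixed trisecants (this needs a short remark, treated in Step 2). The linking number $lk_{ij}$ changes by $\pm1$ exactly under crossing changes between $K_i$ and $K_j$. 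It therefore suffices to show two things for a crossing change between components $K_a$ and $K_b$: $I_{iji}\bmod 2$ changes by $1$ if and only if $\{a,b\}=\{i,j\}$, and $I^-_{ijk}$ does not change.

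\textbf{Step 2: local analysis of a crossing change.} Realize the crossing change as a generic path $L_t$ in which, at $t=0$, an arc $\alpha\subset K_a$ passes through an arc $\beta\subset K_b$ at a point $p$. We may assume the tangents at $p$ are not horizontal. Away from $t=0$ the link stays $3$-generic except at isotopy singularities, which do not change the invariants. So only trisecants whose line passes near $p$ matter.

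Let $q_1,\dots,q_m$ be the points of $L_0\cap\{z=z(p)\}$ other than $p$. For $t$ near $0$, each $q_r$ gives exactly one admissible trisecant on the arcs $\alpha,\beta$ and the arc through $q_r$, namely the horizontal line near $pq_r$. It passes through the two nearby points $\alpha\cap\{z=c\}$ and $\beta\cap\{z=c\}$ for a suitable height $c$.

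I would use the triangle interpretation in Proposition~\ref{prop:3secant_sign_geometry}(1) to compare $t<0$ and $t>0$. Passing through $t=0$, the relative position of the two points near $p$ flips. As a result, the trisecant through $q_r$ changes its ordered type from $(a,b,c(q_r))$ to $(b,a,c(q_r))$, or symmetrically with $q_r$ on the other side. By Proposition~\ref{prop:3secant_delta}(1), which says that swapping two arcs changes the sign of $\Delta_3$, its sign is preserved relative to the new ordering. Thus the total change of $\sigma_3$ is
\[
\sum_r \pm\bigl(s_r^{+}-s_r^{-}\bigr),
\]
where $s_r^{\pm}$ are the trisecants through $q_r$ after and before the change, of types $(b,a,c_r)$ and $(a,b,c_r)$ up to reversal. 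Trisecants with all three points near $p$ do not occur generically.

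\textbf{Step 3: evaluation.} For $\theta_{iji}$ modulo $2$ only types matter, not signs. Type $iji$ can arise only when $\{a,b\}=\{i,j\}$ and $q_r\in K_i$ lies on the side that puts the $K_j$-point in the middle. Now count points. The plane $\{z=z(p)\}$ meets the closed curve $K_i$ transversally in an even number of points. One of these is $p$ itself, so $K_i$ has an odd number of points $q_r$. Each such $q_r$ contributes type $iji$ exactly once, before or after the change. Hence the total change is odd, i.e.\ $1\bmod 2$. If $\{a,b\}\ne\{i,j\}$, no type $iji$ appears, because $a\ne b$ is needed for a middle point of another component. For a self-crossing $a=b$ the types are $(a,a,c)$, which are killed by $\theta_{iji}$ when $i\ne j$.

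For $\theta^-_{ijk}$ with $i,j,k$ pairwise distinct, a trisecant through $q_r$ has type $(a,b,c_r)$ before and $(b,a,c_r)$ after, up to reversal. These contribute to $\theta^-_{ijk}$ only when $\{a,b,c_r\}=\{i,j,k\}$. I expect the two contributions to cancel using the antisymmetry $\theta^-_{ijk}(-s)=-\theta^-_{ijk}(s)$, together with the fact that the point $q_r$ lies on a fixed side of $p$. Summing over the $q_r$ on a component then gives an even count with alternating signs, because consecutive intersection points of a closed component with a plane have opposite vertical orientation. So the sum is $0$.

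The main obstacle is Step 2: rigorously verifying that the local trisecants near $p$ are exactly one per $q_r$, and tracking their signs and orders through $t=0$ via Proposition~\ref{prop:3secant_sign_geometry}. The sign-cancellation in the $I^-_{ijk}$ case also needs care. If that cancellation fails, I would instead try to show $I^-_{ijk}$ is a sum of terms antisymmetric under reversing a component, and use Proposition~\ref{prop:knot_symmetries_secant_class}.
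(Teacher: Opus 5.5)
Your overall plan is the paper's. Both quantities vanish on a split model, so everything reduces to the crossing-change statement of Lemma~\ref{lem:component_cocycle_ijk}. Your stacked-slab base case is fine, and it avoids needing self-crossing changes.

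Where you differ is in the crossing change itself. The paper first isotopes the crossing up above the rest of the link, so at the heights where the switch happens, horizontal planes meet only $K_p$ and $K_q$. Then no trisecant with three distinct labels can appear or disappear. The change in type-$iji$ trisecants is visibly one of type $(p,q,p)$ and one of type $(q,p,q)$.

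You switch in place, so the points $q_r$ of other strands at height $z(p)$ enter the count. Your local picture is correct. The horizontal $2$-secant on $\alpha,\beta$ sweeps a half-turn of directions, in opposite senses for $t<0$ and $t>0$. So each $q_r$ gives exactly one trisecant on each side, with the two points near $p$ exchanged. Given this, your parity argument for $I_{iji}\bmod 2$ works.

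The gap is in $I^-_{ijk}=0$, in three places.

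\emph{The sign rule is wrong.} Computing with Proposition~\ref{prop:3secant_sign_geometry}(1) gives $\sgn(s_r^+)=-\sgn(s_r^-)$. So the local change is $\pm(s_r^++s_r^-)$, not $\pm(s_r^+-s_r^-)$.

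\emph{Nothing cancels at a single $q_r$.} $\theta^-_{ijk}$ is nonzero on at most one of $s_r^\pm$, so each relevant $q_r\in K_c$ contributes $\pm1$. Antisymmetry of $\theta^-_{ijk}$ plays no role, and $q_r$ can lie on either side of $p$.

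\emph{The missing step} is to show that this $\pm1$ equals a sign independent of $r$ times $\sgn(\dot\gamma_r,\nu)$, where $\gamma_r$ is the arc through $q_r$. Write the horizontal offset from $\alpha$ to $\beta$ at height $z$ as $\approx(z-z(p))h+te$, and let $w_r$ be the horizontal direction from $p$ to $q_r$. To leading order, the trisecant on $(\alpha,\beta,\gamma_r)$ has sign $\sgn[(\dot\alpha,\nu)(\dot\beta,\nu)(\dot\gamma_r,\nu)]\,\sgn(h,w_r,\nu)$. It is the one present after the switch iff $\sgn(h,w_r,\nu)=\sgn(h,e,\nu)$. These two dependences on the side of $q_r$ cancel. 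The contribution of $q_r$ is therefore $\pm\sgn[(\dot\alpha,\nu)(\dot\beta,\nu)]\,\sgn(h,e,\nu)\,\sgn(\dot\gamma_r,\nu)$, with the overall $\pm$ fixed by $(i,j,k)$. The sum over $K_c\cap\{z=z(p)\}$ then vanishes, because a closed curve meets a plane with algebraic intersection number zero.

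Without this computation, your Step~3 for $I^-_{ijk}$ is only asserted. The cheaper repair is the paper's device of lifting the crossing before switching it, which makes this half immediate.
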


\begin{lemma}\label{lem:component_cocycle_ijk}
    Let links $L=K_1\cup\cdots\cup K_m$ and $L'$ obtained from $L$ by switching a crossing on components $K_p$ and $K_q$, $1\le p,q\le m$. Then for any $i\ne j\ne k\ne i$
\begin{enumerate}
    \item $I^-_{ijk}(L)=I^-_{ijk}(L')$;
    \item $I^-_{iji}(L)-I^-_{iji}(L')=\left\{\begin{array}{cl}
        1, & \{i,j\}=\{p,q\}, \\
        0, &  \mbox{otherwise}
    \end{array}\right.\pmod 2$.
\end{enumerate}
\end{lemma}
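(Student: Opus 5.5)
We compare the horizontal $3$-secants of $L$ and $L'$ directly. Let $L'$ be obtained from $L$ by a crossing change, realized as a homotopy $L_t$ in which two arcs $\beta_p\subset K_p$ and $\beta_q\subset K_q$ pass through each other once along a short vertical segment (parallel to $\nu$). Away from the moment $t_0$ of self-intersection, $L_t$ is an isotopy. By Corollary~\ref{cor:secant_cocycle_invariant} and Proposition~\ref{prop:component_secant_cocycles}, the invariants $I^-_{ijk}$ and $I_{iji}\pmod 2$ are constant on the intervals $t<t_0$ and $t>t_0$. So it suffices to analyse the change of $\sum_{s}\sgn(s)\theta(s)$ across $t_0$, with $\theta=\theta^-_{ijk}$ or $\theta=\theta_{iji}$.

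Near $t_0$ the two arcs meet at a point $P$. We may assume the crossing segment is vertical and the arcs are not horizontal at $P$. The horizontal $3$-secants that change are exactly those which, at $t=t_0$, pass through $P$ and meet one further arc $\gamma$ of $L$. Generically these form a one-parameter family: the horizontal lines through $P$ in the plane $z=z(P)$, which meet $L$ in finitely many points. A line through $P$ meeting a third point $Q\in\gamma$ is, for $t$ close to $t_0$, a limit of trisecants on $\beta_p,\beta_q,\gamma$ in either order along the line. Both trisecants on $(\beta_p,\beta_q,\gamma)$ exist near $t_0$, and the local model shows the following. Before $t_0$ there is one trisecant, say with $\beta_p$ before $\beta_q$ on the line. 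After $t_0$ there is one with the order of $\beta_p$ and $\beta_q$ swapped, and the sign is preserved by continuity of the orientation picture in Proposition~\ref{prop:3secant_sign_geometry}(1).

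This leads to the following computation, which I expect to be the main obstacle: showing via Proposition~\ref{prop:3secant_sign_geometry} that the old and new trisecants carry equal signs, and that their component types differ by transposing two adjacent entries, $p$ and $q$. For a secant of type $(p,q,r)$ or $(r,p,q)$, including the reversed secants $-s$, the change contributes $\sgn(s)\big(\theta(\ldots p q\ldots)-\theta(\ldots q p\ldots)\big)$. For $\theta^-_{ijk}$ with $i,j,k$ distinct, summing over $s$ and $-s$ and using Corollary~\ref{cor:3secant_inverse_sign}, these contributions cancel in pairs. The reason is that the antisymmetrization $\theta_{ijk}-\theta_{kji}$ together with the adjacent swap produces each term twice with opposite signs, provided the third point $Q$ runs over all points of $L$ on the horizontal plane through $P$. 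This gives part 1.

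For part 2, $\theta_{iji}$ sees only types $(i,j,i)$. A swap of adjacent $p,q$ changes the type into or out of $(i,j,i)$ only when $\{p,q\}=\{i,j\}$ and the third arc lies on $K_i$ or $K_j$ in the appropriate position. Mod $2$, the change equals the number of points $Q\in (K_i\cup K_j)$ on the horizontal plane through $P$, counted with the parity conditions. The total number of intersections of a closed component with a generic horizontal plane is even. A careful count of which positions yield type $(i,j,i)$ should show the change equals $1$ modulo $2$; the extra unit comes from the degenerate contribution of the horizontal line through $P$ tangent to the local crossing picture. If this count does not work out cleanly, I would instead pass to the limit where the crossing change is performed near an extremum of the height function, so that the local picture is standard and the count is explicit.
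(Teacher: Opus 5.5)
Your plan of realizing the switch as a homotopy through one double point $P$ and tracking the horizontal trisecants through $P$ can be made to work. However, the step you single out as the main obstacle is resolved the wrong way, and the cancellation you invoke for part~1 is not what happens.

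\textbf{The signs flip.} Parametrize the arcs by height near $P$, let $u_p,u_q$ be the horizontal velocities of $\beta_p,\beta_q$, and put $w=Q-P$. Continuity only tells you that $\Delta_3(\beta_p,\beta_q,\gamma)$, with the arcs in a \emph{fixed labelled} order, tends to $\det(u_q-u_p,w)$ on both sides of $t_0$. Equivalently, the labelled triangle in Proposition~\ref{prop:3secant_sign_geometry} keeps its orientation. But $\sgn(s)$ uses the order along the line, and $\beta_p,\beta_q$ swap places. By antisymmetry of $\Delta_3$, the old and new trisecants therefore have \emph{opposite} signs. Orienting the secant from $P$ to $Q$, a third point $Q$ contributes $\sgn(s_+)\bigl(\theta(p,q,c(Q))+\theta(q,p,c(Q))\bigr)$: a sum, not a difference.

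\textbf{Part~1 needs a global argument.} For $\theta^-_{ijk}$ with $\{p,q\}=\{i,j\}$, the bracket equals $1$ for every $Q\in K_k$. Replacing $s$ by $-s$ just reproduces the same term, so nothing cancels in pairs. The vanishing is global and rests on a computation absent from your proposal. You need to show that $\sgn(s_+)$ equals a factor depending only on the crossing, times the sign of the vertical component of the tangent of $K_k$ at $Q$. The side of $P$ on which $Q$ lies enters both the new order of $\beta_p,\beta_q$ and $\sgn\Delta_3(\beta_p,\beta_q,\gamma)$ through $\sgn\det(u_q-u_p,w)$, and the two occurrences cancel. Only then is the total change a multiple of the algebraic intersection number of the closed curve $K_k$, $k\notin\{p,q\}$, with the plane $z=z(P)$, which is $0$. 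The case $\{p,q\}=\{j,k\}$ with $Q\in K_i$ is identical.

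\textbf{Part~2.} Signs do not matter mod $2$, and your parity idea is the right one. But the odd unit has nothing to do with a degenerate tangent line. When $\{p,q\}=\{i,j\}$, only points $Q\in K_i$ contribute. Since $P$ itself is a transversal point of $K_i$ on the plane, the number of remaining points of $K_i$ there is odd.

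\textbf{How the paper avoids this.} It first isotopes the crossing into a clasp lifted above the rest of the link. Then $L$ and $L'$ agree below some level, and the horizontal planes where they differ meet only $K_p$ and $K_q$. The differing trisecants have types in $\{p,q\}^3$ (four in $L$, none in $L'$). Part~1 is then immediate, and part~2 is the count of the single trisecant of type $(p,q,p)$ or $(q,p,q)$. Doing this lifting first would also trivialize your double-point analysis, since the plane through $P$ would contain only one further point of $K_p$ and one of $K_q$. Your suggested fallback of switching near an extremum of the height function does not by itself give this isolation.
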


\begin{proof}
Lift the crossing as shown in Fig.~\ref{fig:placeholder3secant_clasp_move}. Then the top part of $L$ contains horizontal trisecants of component types $(p,q,p)$ and $(q,p,q)$ (as well as two trisecants of types $(p,p,q)$ and $(q,q,p)$ which are not drawn in the figure) but the top part of $L'$ does not have horizontal trisecants. Counting the trisecants, we get the claim of the lemma.
\begin{figure}[h]
    \centering
    \includegraphics[width=0.25\linewidth]{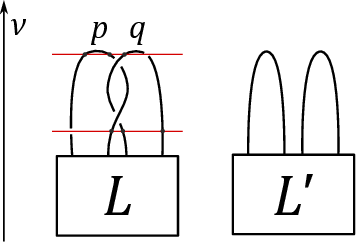}
    \caption{Links $L$ and $L'$}
    \label{fig:placeholder3secant_clasp_move}
\end{figure}
\end{proof}

\begin{proof}[Proof of Proposition~\ref{prop:component_cocycle_ijk}]
By Lemma~\ref{lem:component_cocycle_ijk}, the value $I^-_{ijk}(L)$ coincides with the value of $I^-_{ijk}$ on the unlink which is zero. The second statement of the lemma implies that the difference $I_{iji}(L)-lk_{ij}(L)\pmod 2$ does not change under crossing swiches. For the unlink $U$, $I_{iji}(L)=lk_{ij}(L)=0$, hence, $I_{iji}(L)=lk_{ij}(L)\pmod 2$ for any link $L$.
\end{proof}

\begin{proposition}[\cite{V}]\label{prop:component_cocycle_ijkl}
Let $i\ne j\ne k\ne l$. Then for any link $L$,
\[
I^+_{ijkl}(L)=lk_{ij}(L)\cdot lk_{kl}(L).
\]
\end{proposition}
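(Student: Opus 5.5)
We follow the same strategy as in the proof of Proposition~\ref{prop:component_cocycle_ijk}: show that both sides change in the same way under crossing changes, then compare on the unlink. By Corollary~\ref{cor:secant_cocycle_invariant} and Proposition~\ref{prop:component_secant_cocycles}, $I^+_{ijkl}$ is an isotopy invariant. The right-hand side $lk_{ij}lk_{kl}$ changes only when a crossing between components of the pairs $\{i,j\}$ or $\{k,l\}$ is switched. Any link can be reduced to the split unlink by crossing switches. On the split unlink of round circles placed far apart, there are no quadrisecants meeting four distinct components in the pattern $(i,j,k,l)$ in generic position, so $I^+_{ijkl}(U)=0=lk_{ij}(U)lk_{kl}(U)$. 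It therefore suffices to prove a crossing-change lemma:
\[
I^+_{ijkl}(L)-I^+_{ijkl}(L')=\pm\bigl(\delta_{\{i,j\},\{p,q\}}\,lk_{kl}+\delta_{\{k,l\},\{p,q\}}\,lk_{ij}\bigr),
\]
with the sign and the version of the linking number (of $L$ or of $L'$) matched to how the product $lk_{ij}lk_{kl}$ changes.

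To compute the change, we localize the crossing switch as in Lemma~\ref{lem:component_cocycle_ijk}. Pull the crossing far away along a thin tube, so that it sits as a small clasp at a large distance from the rest of the link. The quadrisecants of $L$ and $L'$ then differ only in those passing through the clasp. A quadrisecant through the two strands of the clasp (components $p$ and $q$) together with two other arcs of the link is essentially a $2$-secant of the clasp in a fixed direction $\kappa$, combined with a $2$-secant of the remaining link in the same direction $\kappa$ (nearly parallel lines). Quadrisecants meeting the clasp only once, or meeting the tube, appear in cancelling pairs or identically in $L$ and $L'$.

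For the signs we use formula~\eqref{far_secant_sheaf} of Remark~\ref{rem:4secant_geom}. When the clasp is far away (that is, $z_1,z_2\ll0$), the sign of such a quadrisecant equals the product of the sign of the clasp bisecant, the sign of the bisecant on the other two arcs, and a sign coming from the ordering of the points. As the direction $\kappa$ sweeps through the clasp, the clasp crossing contributes sign $\sgn(\alpha_1,\alpha_2)$. The remaining $2$-secants in direction $\kappa$ of type $\{k,l\}$ are the crossings of the diagram of $L$ projected along $\kappa$, and their signed count gives $lk_{kl}$ by Proposition~\ref{prop:component_cocycle_ij}. Summing over the symmetrized type $\theta_{ijkl}+\theta_{lkji}$, the ordering factor is absorbed because the clasp points are the first (or last) two points on the line. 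Hence the difference is $\epsilon\cdot lk_{kl}$ when $\{p,q\}=\{i,j\}$, and similarly $\epsilon\cdot lk_{ij}$ when $\{p,q\}=\{k,l\}$.

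The main obstacle is this local analysis. First, one must justify that exactly the quadrisecants of the form ``clasp bisecant $\times$ far bisecant'' contribute. Second, one must check the sign bookkeeping: the ordering factor in~\eqref{far_secant_sheaf}, the two orientations of lines, and the positivity $\iota$-symmetrization. Together these should give exactly the change $\Delta(lk_{ij}lk_{kl})=\pm lk_{kl}$. We would verify the final sign on a single example, the Hopf link pairs $(i,j)$ and $(k,l)$, and otherwise argue by continuity in $\kappa$.
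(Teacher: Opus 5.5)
Your route is the same as the paper's. You use invariance of $I^+_{ijkl}$ and a crossing-change lemma proved by pulling the crossing $c$ far from the rest $R$ of the link, so that only quadrisecants made of the bisecant at $c$ and a bisecant of $R$ change. Their signs $\sgn(c)\sgn(\tilde c)$ are read off from \eqref{far_secant_sheaf}, and you finish by comparing with the unlink. The gap is that you implicitly take $i,j,k,l$ pairwise distinct (``four distinct components''). In this paper, $i\ne j\ne k\ne l$ only asks consecutive indices to differ; compare ``$i\ne j\ne k\ne i$'' in Proposition~\ref{prop:component_cocycle_ijk}. So $i=k$, $j=l$ is allowed. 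The right-hand side is then $lk_{ij}^2$, and switching a crossing $c$ between $K_i$ and $K_j$ changes it by $2\sgn(c)lk_{ij}(L)-1$. Your local analysis (``the difference is $\epsilon\cdot lk_{kl}$ \dots and similarly $\epsilon\cdot lk_{ij}$'') does not produce this. Choosing ``the version of the linking number matched to how the product changes'' assumes the answer rather than deriving it.

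What is missing is the two-sided count the paper does for $\{i,j\}=\{k,l\}=\{p,q\}$. Unlike the linear case, both $L$ and $L'$ now carry relevant quadrisecants near the crossing. Suppose $K_p$ is met first at $c$ in $L$. The quadrisecants of type $(p,q,p,q)$ pair $c$ with the crossings $\tilde c\ne c$ of $R$ (projected along $l$) between $K_p$ and $K_q$ that have the same over/under pattern as $c$. The clasp crossing itself gives no quadrisecant and must be excluded, so these contribute $\sgn(c)(lk_{ij}(L)-\sgn(c))$. In $L'$, the type $(q,p,q,p)$ quadrisecants contribute $\sgn(c')(lk_{ij}(L')-\sgn(c'))=-\sgn(c)lk_{ij}(L)$. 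The difference is $2\sgn(c)lk_{ij}(L)-1=\sgn(c)(lk_{ij}(L)+lk_{ij}(L'))=lk_{ij}(L)^2-lk_{ij}(L')^2$, as required.

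Two smaller points. First, your base case must also rule out degenerate types such as $(i,j,i,l)$ or $(i,j,i,j)$ on the unlink. This is easy for round circles with pairwise disjoint spanning disks: a line meeting a round circle twice lies in its plane, and any points between the two meet the disk. Second, the sign need not be checked on an example. With the points numbered along the secant, the ordering factor in \eqref{far_secant_sheaf} is $+1$. When $(i,j)=(q,p)$, the contribution comes from $L'$, where $\sgn(c')=-\sgn(c)$, and the subtraction again gives $\sgn(c)lk_{kl}$.
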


\begin{lemma}\label{lem:component_cocycle_ijkl}
    Let links $L=K_1\cup\cdots\cup K_m$ and $L'$ obtained from $L$ by switching a crossing $c$ on components $K_p$ and $K_q$, $1\le p,q\le m$. Then for any $i\ne j\ne k\ne l$
\begin{equation}
   I^+_{ijkl}(L)-I^+_{ijkl}(L')=\left\{\begin{array}{cl}
        \sgn(c)lk_{kl}, & \{i,j\}=\{p,q\}, \{k,l\}\ne\{p,q\}, \\
        \sgn(c)lk_{ij}, & \{i,j\}\ne\{p,q\}, \{k,l\}=\{p,q\}, \\
        2\sgn(c)lk_{ij}-1, & \{i,j\}=\{k,l\}=\{p,q\}, \\
        0, &   \{i,j\}\ne\{p,q\}, \{k,l\}\ne\{p,q\}.
    \end{array}\right.
\end{equation}
\end{lemma}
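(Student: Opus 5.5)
We follow the scheme of the proof of Lemma~\ref{lem:component_cocycle_ijk}: localize the crossing change and count only the quadrisecants that are created or destroyed. Since $I^+_{ijkl}$ is an isotopy invariant (Corollary~\ref{cor:secant_cocycle_invariant}), we may first isotope $L$ as we like.

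\textbf{Setting up the picture.} We isotope $L$ so that the crossing $c$ becomes a small clasp: the two strands of $K_p$ and $K_q$ at $c$ are pulled far up along $\nu$, into a thin region $U$ lying high above the rest of the link. In $L$ and $L'$ these strands form a clasp and a trivial (unclasped) pair respectively. Outside $U$ the two links coincide. The long vertical fingers running down from $U$ to the body of the link are chosen nearly straight, so that any line meeting them in two points is nearly vertical.

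\textbf{Classifying quadrisecants.} The quadrisecants of $L$ and $L'$ that avoid $U$ coincide, and so cancel in the difference. Quadrisecants with all four points in $U$ are local. In the clasp for $L$ we expect a bounded number of local quadrisecants, all of type $(p,q,p,q)$, $(q,p,q,p)$, $(p,q,q,p)$ or $(q,p,p,q)$ up to reversal; the unclasped pair for $L'$ should have none. I would check this count with the formula of Proposition~\ref{prop:4secant_sign_geometry}, and I expect these local quadrisecants to supply the constant $-1$ in the case $\{i,j\}=\{k,l\}=\{p,q\}$.

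The remaining quadrisecants meet $U$ in exactly two points, one on the $K_p$-strand and one on the $K_q$-strand near the clasp, and meet the rest of $L$ in two further points. I would check that a line through one point on each strand must be close to the clasp bisecant direction. Such lines form a $2$-parameter family, so generically we get quadrisecants at the points where this family's ``shadow'' meets pairs of points of the rest of the link.

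\textbf{Counting signs.} To count these signs I use formula~\eqref{far_secant_sheaf} from Remark~\ref{rem:4secant_geom}. Since the clasp lies far away ($z_1,z_2\ll0$), the sign factors as $\sgn(\alpha_1,\alpha_2)\sgn(\alpha_3,\alpha_4)$ times the order sign. The first factor is $\sgn(c)$, or its change between $L$ and $L'$. The second factor is the sign of the crossing between the two other points in the projection along the secant sheaf, which is essentially a diagram of the rest of the link. Summing over those crossings whose components are $K_k$ and $K_l$, in both orderings as $\theta^+$ prescribes, gives $lk_{kl}$. By the same argument as in Proposition~\ref{prop:component_cocycle_ij}, half the difference of signed crossings between the two components is a linking number. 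Symmetrically, the clasp playing the role of the pair $(k,l)$ gives $\sgn(c)\,lk_{ij}$. When $\{i,j\}=\{k,l\}=\{p,q\}$ both contributions occur, which gives $2\sgn(c)lk_{ij}$ plus the local term. When neither pair is $\{p,q\}$, no type matches and the difference is $0$.

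\textbf{Main obstacle.} The hardest part is the bookkeeping of the sign factor $\sgn[\prod(z_a-z_b)]$ together with the positions of the clasp points in the order along the secant. This must be done so that the sum over both orientations ($\theta_{ijkl}+\theta_{lkji}$) produces exactly $lk_{kl}$ and not $2lk_{kl}$ or $0$. The second delicate point is the exact local count of $-1$ in the doubly matched case, that is, how many clasp quadrisecants exist and with which signs. There I would first compute explicitly on a linear model of two skew hooked arcs.
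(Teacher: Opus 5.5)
Your route is the paper's: pull the crossing away from the rest $R$, and note that $L$ and $L'$ differ only by $4$-secants with two points at $c$ and two on $R$. Then use the factorization \eqref{far_secant_sheaf} of Remark~\ref{rem:4secant_geom} to index these secants by crossings $\tilde c$ of the projection of $R$. Their sign is $\sgn(c)\sgn(\tilde c)$ in $L$, and $\sgn(c')\sgn(\tilde c)$ in $L'$, where the order of the pair at the crossing is swapped. Cases 1, 2 and 4 come out this way.

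The gap is in the case $\{i,j\}=\{k,l\}=\{p,q\}$, where you misidentify the source of the constant. You claim the far count gives exactly $2\sgn(c)lk_{ij}$ and expect local clasp quadrisecants to supply $-1$. What you miss is that the projection of $R$ does not contain the crossing $c$ itself, because it has been pulled into $U$.
\begin{itemize}
\item In $L$, the secants of type $(p,q,p,q)$ correspond to the crossings of $R$ with the same over/under pattern as $c$. Since $lk_{pq}(L)$ is the signed count of all crossings of that pattern, $c$ included, they contribute $\sgn(c)\bigl(lk_{pq}(L)-\sgn(c)\bigr)=\sgn(c)lk_{pq}(L)-1$.
\item In $L'$ the pattern is reversed and $c'$ is missing in the same way. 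Because $lk_{pq}(L')=lk_{pq}(L)-\sgn(c)$, this gives $\sgn(c')\bigl(lk_{pq}(L')-\sgn(c')\bigr)=-\sgn(c)lk_{pq}(L)$.
\end{itemize}
The difference is already $2\sgn(c)lk_{ij}-1$. So the $-1$ comes from the missing crossing, and no local term is needed; the paper uses none. If your linear-model computation really produced a net local contribution of $-1$, you would end up with $2\sgn(c)lk_{ij}-2$, which is wrong.

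The bookkeeping you flag as the main obstacle also resolves differently from what you suggest. The summand $\theta_{lkji}$ does not add the opposite over/under pattern at $\tilde c$. It only accounts for the reversed secant, and it is what produces case 2, where the pair at $c$ plays the role of $(k,l)$. For secants issuing from $c$ in the order $(p,q)$, only those crossings $\tilde c$ at which the $K_k$-point precedes the $K_l$-point give type $(p,q,k,l)$. Their signed count is exactly $lk_{kl}$, by the one-sided count used in Proposition~\ref{prop:component_cocycle_ij}; there is no ``half the difference''. In $L'$ the pair order becomes $(q,p)$. So for $(i,j)=(p,q)$ and $\{k,l\}\neq\{p,q\}$ the link $L'$ contributes nothing, and for $(i,j)=(q,p)$ the roles of $L$ and $L'$ are exchanged.
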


\begin{proof}

Pull away the crossing $c$ from the rest part $R$ of the link as shown in Fig.~\ref{fig:4secant_skein_configuration}. Assume that the part $R$ lies near a plane orthogonal to a line $l$ connecting $c$ with $R$, and that the $2$-secant sheaf at $c$ cover $R$.
\begin{figure}[h]
    \centering
    \includegraphics[width=0.25\linewidth]{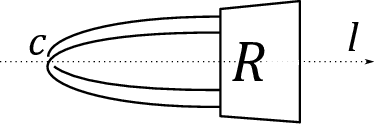}
    \caption{Link $L$}
    \label{fig:4secant_skein_configuration}
\end{figure}

Suppose that in $2$-secants close to  $l$ the first intersection point belongs to $K_p$. Diagrams $L$ and $L'$ differ by $4$-secants which pass near $c$ (or the corresponding crossing $c'$ in $L'$). These $4$-secants have component type $(p,q,r,s)$ for $L$ and $(q,p,r,s)$ for $L'$ and correspond to those crossings $\tilde c$ of the orthogonal projection of $R$ along $l$ where $K_s$ overcrosses $K_r$. By Remark~\ref{rem:4secant_geom} the sign of such a $4$-secant is equal to $\sgn(c)\sgn(\tilde c)$ (or $\sgn(c')\sgn(\tilde c)$).

Let $i=p$, $j=q$, and $\{k,l\}\ne\{p,q\}$.  The impact of the $4$-secants of $L$ or component type $(i,j,k,l)$ which pass near $c$ is
\[
\sum_{\tilde c\colon K_l{\scriptsize\mbox{ over }} K_k}\sgn(c)\sgn(\tilde c)=\sgn(c) lk_{kl}(L),
\]
and the impact of the $4$-secants of $L'$ or component type $(i,j,k,l)$ which pass near $c'$ is $0$. Hence,
\[
I^+_{ijkl}(L)-I^+_{ijkl}(L')=\sgn(c) lk_{kl}(L)-0=\sgn(c) lk_{kl}(L).
\]
Analogously, if $i=q$, $j=p$, and $\{k,l\}\ne\{p,q\}$ then
\[
I^+_{ijkl}(L)-I^+_{ijkl}(L')=0-\sgn(c') lk_{kl}(L')=\sgn(c) lk_{kl}(L)
\]
since $\sgn(c')=-\sgn(c)$ and $lk_{kl}(L')=lk_{kl}(L)$. This proves the first case.

Let $i=k=p$, $j=l=q$. For the component types $(i,j,i,j)$ and $(j,i,j,i)$ , the $4$-secants of $L$ which are close to $l$, is
\[
\sum_{\tilde c\in R\colon K_j{\scriptsize\mbox{ over }} K_i}\sgn(c)\sgn(\tilde c)=\sgn(c)(lk_{ij}(L)-\sgn(c))=\sgn(c)lk_{ij}(L)-1
\]
since we exclude $c$ when we count the crossings of $K_i$ and $K_j$ in $R$. The impact of $4$-secants of $L'$ is
\[
\sum_{\tilde c\in R\colon K_i{\scriptsize\mbox{ over }} K_j}\sgn(c)\sgn(\tilde c)=\sgn(c')(lk_{ij}(L')-\sgn(c'))=-\sgn(c)lk_{ij}(L)
\]
since $lk_{ij}(L')=lk_{ij}(L)+\sgn(c')$. Then
\[
I^+_{ijkl}(L)-I^+_{ijkl}(L')=2\sgn(c) lk_{ij}(L)-1.
\]
The case $i=k=q$, $j=l=p$ and other cases are proved analogously.
\end{proof}

\begin{proof}[Proof of Proposition~\ref{prop:component_cocycle_ijkl}]
By Lemma~\ref{lem:component_cocycle_ijkl} the difference $I^+_{ijkl}-lk_{ij}lk_{kl}$ does not change under crossing switches. For the unlink $U$, $I^+_{ijkl}(U)=lk_{ij}(U)=lk_{kl}(U)=0$. Hence, $I^+_{ijkl}(L)=lk_{ij}(L)lk_{kl}(L)$ for any link.
\end{proof}

\begin{example}[Alternating quadrisecants of long knots]\label{exa:long_knot_alternating_quadrisecant}
Fix a ball $B_0$ in $\R^3$ and an arc $\alpha\subset\overline{\R^3\setminus B_0}$ such that $\partial\alpha=\alpha\cap B_0$. A \emph{long knot} is a knot $K$ that lies inside $B_0$ except the arc $\alpha$, i.e. $K\setminus B_0=\alpha$. Two long knots $K_1$ and $K_2$ are isotopic if there is an isotopy $\phi$ of $\R^3$ such that $\phi|_{\R^3\setminus B_0}=id$ and $\phi(K_1)=K_2$.

By analogy with the knot case one can define a multicrossing complex and secant classes for long knots.

Given a long knot $K$, choose a point $P$ in the arc $\alpha$. For a secant $s$ that does not contain $P$, there are two orders on its intersection points: the one induced by the knot orientation and the starting point $P$; and the other induced by the orientation of the secant line. A $4$-secant is called \emph{alternating} if the intersection points enumerated by the knot orientation, lie on the secant in the order $3142$ or $2413$ (Fig.~\ref{fig:long_knot}).

\begin{figure}[h]
    \centering
    \includegraphics[width=0.25\linewidth]{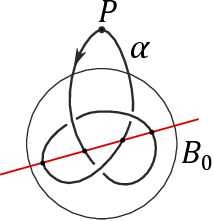}
    \caption{A long knot with an alternating $4$-secant}
    \label{fig:long_knot}
\end{figure}

Like component cocycles, the function $\theta_{alt}$ that assigns $1$ to alternating $4$-secants, and $0$ to the other $4$-secants, is a positive multicrossing cocycle. Hence, $\theta_{alt}$ defines an integer-valued invariant $I_{\theta_{alt}}$ on the long knots.

Budney, Conant, Scannell, and Sinha~\cite{BCSS} proved that the invariant $I_{\theta_{alt}}$ coincides with Vassiliev invariant $v_2$ of order $2$ (the $z^2$-coefficient of the Conway polynomial).
\end{example}

\section{Proof of the main theorem}\label{sect:proof}

\subsection{2-secants}\label{subsect:2s_proof}

For completeness, let us give the proof of invariance of $\sigma_2(L)$ (see~\cite[Proposition 59]{Nca}). We assume that the direction $\nu$ is the unit vector of the axis $Oz$.

Let us show first that $\sigma_2(L)$ is a cycle in the multicrossing complex. The elements of $\mathcal S^{adm}_2(L)$ correspond to the crossings of the diagram of $L$ (orthogonal projection of $L$ into the  plane $Oxy$). Given a $2$-secant $s\in\mathcal S^{adm}_2(L)$, the differentials $\partial_{i,q}(s)$, $i\in\{1,2\}$, $q\in\{l,r\}$ are vertical $1$-secants of the arcs incident to the crossing $s$ (Fig.~\ref{fig:s2_boundary}).
\begin{figure}[h]
    \centering
    \includegraphics[width=0.4\linewidth]{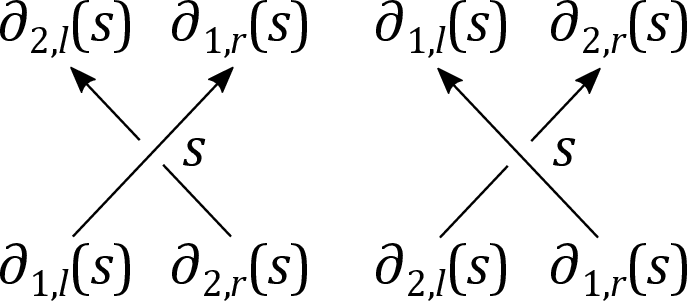}
    \caption{Differentials of a $2$-secant}
    \label{fig:s2_boundary}
\end{figure}

We see that $\sgn(s)d(s)$ is the sum of $1$-secant of outcoming arcs minus the sum of $1$-secants of the incoming arcs. Since each arc goes from one crossing to another, the summands in the sum $d\sigma_2(L)$ annihilates:
\[
d\sigma_2(L)=\sum_{s\in\mathcal S^{adm}_2(L)}\sgn(s)d(s)=0.
\]
Thus, $\sigma_2(L)$ is a cycle.

Now, we check the invariance of $\sigma_2(L)$. Let $f\colon L\to L'$ be an isotopy of links. If $f$ does not change the projection of $L$ into $Oxy$ or induces an isotopy of  link diagrams then there is a bijection between the crossings of the diagrams of $L$ and $L'$ and the corresponding $2$-secants of the crossings are isotopic.

If $f$ is an increasing first Reidemeister move and $s$ is the new vertical $2$-secant, then
\[
\sigma_2(L')-f_*(\sigma_2(L))=\sgn(s)s= 0\in C_2(L)
\]
since $s\in\tilde{\mathdutchcal D}(L,2)$.

If $f$ is an increasing second Reidemeister move and $s_1$ and $s_2$ are the new vertical $2$-secants then $s_1$ and $s_2$ are isotopic but $\sgn(s_1)=-\sgn(s_2)$. Hence,
\[
\sigma_2(L')-f_*(\sigma_2(L))=\sgn(s_1)s_1+\sgn(s_2)s_2=0.
\]

If $f$ is a third Reidemeister move then $\sigma_2(L')-f_*(\sigma_2(L))=\pm d(s)$ where $s$ is the vertical $3$-secant which appears during the move (Fig.~\ref{fig:s2_R3_invariance}).
\begin{figure}[h]
    \centering
    \includegraphics[width=0.7\linewidth]{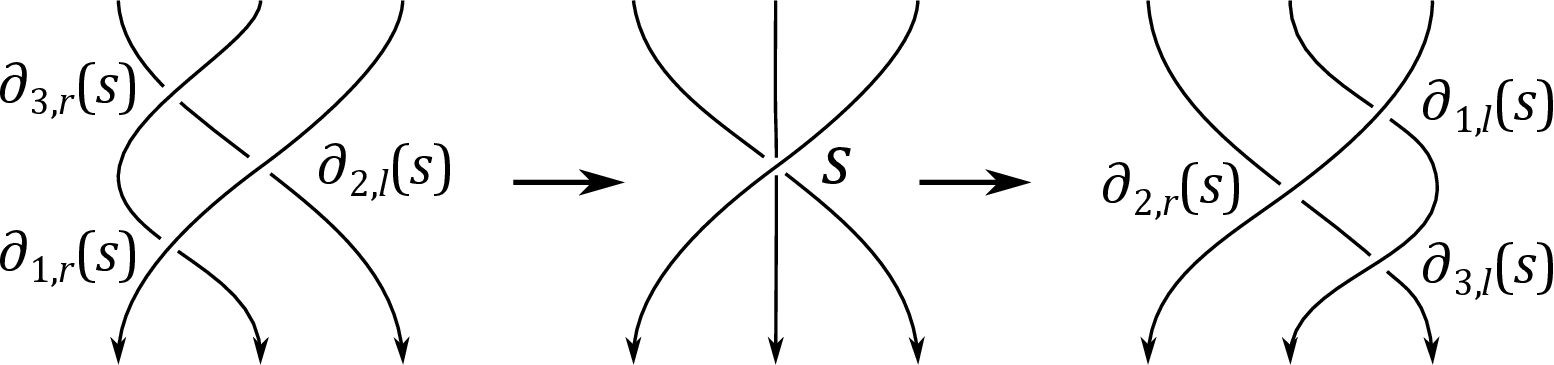}
    \caption{A third Reidemeister move}
    \label{fig:s2_R3_invariance}
\end{figure}

\subsection{3-secants}\label{subsect:3s_proof}

Assume that the direction $\nu$ is the unit vector of the axis $Oz$.

\begin{lemma}\label{lem:3secant_cocycle}
The chain $\sigma_3(L)$ is a cycle in $C^-_3(L,\Z)$.
\end{lemma}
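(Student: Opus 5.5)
We have to show two things about $\sigma_3(L)$: that it is a well-defined element of $C^-_3(L,\Z)$, and that $d\sigma_3(L)=0$ there. Well-definedness comes from Corollary~\ref{cor:3secant_inverse_sign}. Horizontal secants come in pairs $s,-s$, and $\sgn(-s)=-\sgn(s)$. In $C_3$ we have $\iota(s)=(-1)^{6}\bar s=\bar s$, and $\bar s$ is exactly the multicrossing of $-s$. Hence the pair contributes $\sgn(s)(s-\iota s)$, which is compatible with the negative quotient $C^-_3=C_3/(id+\iota)$. In that quotient $s-\iota s=2s$, so each unoriented horizontal trisecant is counted consistently.

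For the cycle condition, I would describe the faces $\partial_{i,p}(s)$ of a horizontal trisecant $s$ on arcs $\alpha_1,\alpha_2,\alpha_3$ in the spinner probe space $\mathcal P_3$. Smoothing the $i$-th intersection to the left or to the right of the meridian gives the horizontal $2$-secants on the remaining two arcs that lie on the two sides of the trisecant. More precisely, they lie on the two sides of the $2$-dimensional sheet $B(\alpha_j,\alpha_k)$, pushed slightly past $\alpha_i$. So the natural geometric object is the $1$-dimensional stratum of horizontal transversal $2$-secants inside the $3$-dimensional space $\mathcal P^\bot_\nu$. For an $n$-generic link, each component of this stratum is a curve whose ends are of three kinds: horizontal trisecants, where three such curves meet; $2$-secants through a height extremum; or nothing, when the curve is closed. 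The argument will be a discrete Stokes argument. For each curve $\Gamma$ of horizontal $2$-secants on arcs $\beta_1,\beta_2$, all points of $\Gamma$ define the same class in $\widetilde{\mathdutchcal C}(L,2)$, since they are homotopic through transversal $2$-secants. Therefore $d\sigma_3$ reduces to a sum over endpoints of the curves, and I want to show that the coefficients cancel curve by curve.

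The key local computation is at a trisecant. I would use Proposition~\ref{prop:3secant_sign_geometry}: $\sgn(s)$ is the orientation of the triangle $A_1A_2A_3$ just above $s$. Passing the line through $\alpha_i$ moves the $2$-secant on $\alpha_j,\alpha_k$ from one side of $\alpha_i$ to the other, so $\partial_{i,l}$ and $\partial_{i,r}$ are the germs of the same curve $\Gamma_{jk}$ on opposite sides of $s$. I would orient each $\Gamma_{jk}$ by a coherent co-orientation, namely the sign $\sgn(\beta_1,\beta_2)$ computed in the horizontal plane. I then need to check that $(-1)^i\sgn(s)$ gives $+1$ at the outgoing end and $-1$ at the incoming end, i.e.\ $\sgn(s)(-1)^i(\partial_{i,l}-\partial_{i,r})$ equals (end of $\Gamma$) $-$ (start of $\Gamma$). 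This is the analogue of the ``outgoing minus incoming arcs'' argument in Section~\ref{subsect:2s_proof}, and I would verify it with the linearized arcs from the proof of Proposition~\ref{prop:3secant_sign_geometry}.

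The remaining endpoints are the degenerate $2$-secants through extrema, and this is the main obstacle. At a local maximum or minimum of $z$ on an arc, a horizontal $2$-secant from another arc becomes tangent there. I expect that the two germs of $\Gamma$ on either side of such a point correspond to multicrossings that become equal modulo $\widetilde{\mathdutchcal D}(L,2)$, or else the curve simply turns back with reversed orientation, so the two germs cancel. If that fails, I would use the relation $\bar\gamma\sim-\gamma$ in the negative quotient to pair the germ with its reversal $-s$. Closed components contribute nothing. Summing over all curves then gives $d\sigma_3(L)=0$ in $C^-_2(L,\Z)$, since $d$ commutes with $\iota$ by Corollary~\ref{cor:multicrossing_inversion}.
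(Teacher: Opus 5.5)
Your telescoping along the one-dimensional family of horizontal $2$-secants is the same mechanism as the paper's proof, which sweeps the level $z_0$ and tracks the chain $\delta(z_0)=\sum_{i<j}\epsilon'(s_{ij})s_{ij}$. However, there is a genuine gap in the one ingredient that makes the telescoping work: the orientation of the curves. You orient $\Gamma$ by the $2$-secant sign $\sgn(\beta_1,\beta_2)$. For a horizontal $2$-secant, a direct computation gives $\sgn(\beta_1,\beta_2)=\epsilon'\cdot\sgn\dot\theta$. Here $\epsilon'$ is the sign of $(\dot\beta_1,\nu)(\dot\beta_2,\nu)$, as in the paper, and $\dot\theta$ is the rate in $z$ at which the horizontal line through $\beta_1(z),\beta_2(z)$ rotates counterclockwise as seen from the tip of $\nu$. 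The factor $\sgn\dot\theta$ is not constant along $\Gamma$: it flips at horizontal zero $2$-secants, which occur generically. It also plays no role at a trisecant. With the paper's conventions, for a trisecant $s$ on $\alpha_1,\alpha_2,\alpha_3$ and $\{i,j,k\}=\{1,2,3\}$, $j<k$,
\[
\sgn(s)(-1)^i\bigl(\partial_{i,l}s-\partial_{i,r}s\bigr)=-\epsilon'(s_{jk})\bigl(s_{jk}(z_0+\epsilon)-s_{jk}(z_0-\epsilon)\bigr),
\]
and summing over $i$ gives the paper's identity $\delta(z_0+\epsilon)-\delta(z_0-\epsilon)=-\sgn(s)d(s)$. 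So the orientation that telescopes is $\epsilon'\,\partial_z$, and your local check fails. For example, let all three arcs rise, let $\alpha_1,\alpha_3$ be vertical, and tilt $\alpha_2$ along $\nu\times\kappa$, where $\kappa$ is the direction of $s$. Then $\sgn(\alpha_1,\alpha_2)=-\sgn(\alpha_2,\alpha_3)$. Yet $\Gamma_{12}$ and $\Gamma_{23}$ both have $\epsilon'=1$, so they receive the same in/out pattern with respect to $\partial_z$. No global convention based on $\sgn(\beta_1,\beta_2)$ can match both.

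The missing weight $\epsilon'$ is also what settles the extremum case you left open. Near a minimum or maximum, two different things happen.
\begin{enumerate}
\item The curve of secants joining another arc $\alpha_i$ to the two new branches does not end. It turns back in $z$ through a $2$-secant that is still transversal, so its class is constant. Moreover, $\epsilon'$ flips exactly where the $z$-direction of the curve reverses, so $\epsilon'\partial_z$ stays coherent. This is the paper's statement that $s_{i,n+1}$ and $s_{i,n+2}$ are isotopic with $\epsilon'(s_{i,n+1})=-\epsilon'(s_{i,n+2})$. Your $\sgn(\beta_1,\beta_2)$ does not change at the extremum, so your orientation is incoherent there and nothing cancels.
\item The only genuinely new end belongs to the curve of secants joining the two new branches. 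That curve dies at the horizontal tangent line, and its germ lies in $\widetilde{\mathdutchcal D}(L,2)$.
\end{enumerate}
Finally, your fallback relation is wrong in degree $2$. There $\iota\gamma=-\bar\gamma$, so $C^-_2$ identifies $\bar\gamma$ with $+\gamma$, and reversing a $2$-secant never produces a cancellation. This identification, together with $\epsilon'(s_{ij})=\epsilon'(s_{ji})$, only makes the count independent of the orientation of the lines. With $\epsilon'$ in place of $\sgn(\beta_1,\beta_2)$, your argument becomes the paper's.
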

\begin{proof}
For a general $z_0\in\R$, the intersection $L\cap\{z=z_0\}$ is transversal. Denote
\[
L\cap\{z=z_0\}=\{\alpha_1(z_0),\dots,\alpha_n(z_0)\}.
\]
Consider the chain
\[
\delta(z_0)=\sum_{1\le i<j\le n}\epsilon'(s_{ij})s_{ij}
\]
where $s_{ij}$ is the horizontal $2$-secant on $\alpha_i(z_0)$ and $\alpha_j(z_0)$, and
\[
\epsilon'(s_{ij})=(\dot\alpha_i(z_0),\nu)\cdot(\dot\alpha_j(z_0),\nu).
\]
Since $\epsilon'(s_{ij})=\epsilon'(s_{ji})$, $\delta(z_0)$ as a chain in $C_2^-(L)$ does not depend on the enumeration of $L\cap\{z=z_0\}$.

For $z_0\ll 0$ or $z_0\gg 0$, $L\cap\{z=z_0\}=\emptyset$ and $\delta(z_0)=0$. We will write this as $\delta(\pm\infty)=0$. Let us look how $\delta(z_0)$ changes when $z_0$ grows.

If a layer $\R^2\times[z_1,z_2]$ does not contain extremum points of $L$ and horizontal trisecants then $\delta(z_1)$ and $\delta(z_2)$ are sums of pairwise isotopic $2$-secants, hence, $\delta(z_1)=\delta(z_2)$.

If $\{z=z_0\}$ contains a local minimum of $L$ then for a small $\epsilon>0$
\begin{gather*}
L\cap\{z=z_0-\epsilon\}=\{\alpha_1(z_0-\epsilon),\dots,\alpha_n(z_0-\epsilon)\},\\
L\cap\{z=z_0+\epsilon\}=\{\alpha_1(z_0+\epsilon),\dots,\alpha_n(z_0+\epsilon),\alpha_{n+1}(z_0+\epsilon),\alpha_{n+2}(z_0+\epsilon)\}.
\end{gather*}
The $2$-secants $s_{ij}(z_0-\epsilon)$ and  $s_{ij}(z_0+\epsilon)$, $1\le i<j\le n$, are isotopic. We will omit the argument $z_0\pm\epsilon$. The new $2$-secants $s_{i,n+1}$ and $s_{i,n+2}$ are isotopic but $\epsilon'(s_{i,n+1})=-\epsilon'(s_{i,n+2})$ (Fig.~\ref{fig:3secant_boundary_minimum}). Then
\begin{multline*}
    \delta(z_0+\epsilon)- \delta(z_0-\epsilon)=\sum_{i=1}^n(\epsilon'(s_{i,n+1})s_{i,n+1}+\epsilon'(s_{i,n+2})s_{i,n+2})+\\
\epsilon'(s_{n+1,n+2})s_{n+1,n+2}=\epsilon'(s_{n+1,n+2})s_{n+1,n+2}=0
\end{multline*}
since $s_{n+1,n+2}\in\tilde{\mathdutchcal D}(L,2)$. Thus, $\delta(z_0+\epsilon)=\delta(z_0-\epsilon)$.

\begin{figure}[h]
    \centering
    \includegraphics[width=0.5\linewidth]{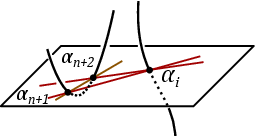}
    \caption{Secants near a mimimum point}
    \label{fig:3secant_boundary_minimum}
\end{figure}

Analogously, $\delta(z_0+\epsilon)=\delta(z_0-\epsilon)$ when $z_0$ is the value of a local maximum of $L$.

Let the plain $z=z_0$ contains a horizontal trisecant $s$ on $\alpha_i(z)$, $\alpha_j(z)$ and $\alpha_k(z)$. Then
\begin{multline*}
\delta(z_0+\epsilon)-\delta(z_0-\epsilon)=\epsilon'(s_{ij})s_{ij}(z_0+\epsilon)+\epsilon'(s_{ik})s_{ik}(z_0+\epsilon)+\epsilon'(s_{jk})s_{jk}(z_0+\epsilon)-\\
\epsilon'(s_{ij})s_{ij}(z_0-\epsilon)-\epsilon'(s_{ik})s_{ik}(z_0-\epsilon)-\epsilon'(s_{jk})s_{jk}(z_0-\epsilon).
\end{multline*}
From Proposition~\ref{prop:3secant_sign_geometry}, this sum is equal to $-\sgn(s)d(s)$.
\begin{figure}[h]
    \centering
    \includegraphics[width=0.5\linewidth]{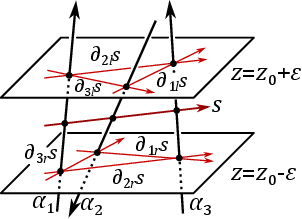}
    \caption{Secants near a negative $3$-secant}
    \label{fig:3secant_boundary}
\end{figure}
For example, in the configuration in Fig.~\ref{fig:3secant_boundary}, we have $i=1,j=2,k=3$, and $\sgn(s)=\epsilon'(s_{12})=\epsilon'(s_{23})=-1$, $\epsilon'(s_{13})=1$. Then
\begin{multline*}
\delta(z_0+\epsilon)-\delta(z_0-\epsilon)=\epsilon'(s_{12})s_{12}(z_0+\epsilon)+\epsilon'(s_{13})s_{13}(z_0+\epsilon)+\epsilon'(s_{23})s_{23}(z_0+\epsilon)-\\
\epsilon'(s_{12})s_{12}(z_0-\epsilon)-\epsilon'(s_{13})s_{13}(z_0-\epsilon)-\epsilon'(s_{23})s_{23}(z_0-\epsilon)=\\
-\partial_{3l}(s)+\partial_{2l}(s)-\partial_{1l}(s)+\partial_{3r}(s)-\partial_{2r}(s)+\partial_{1r}(s)=d(s).
\end{multline*}
The other cases are checked analogously.

Thus, the chain $\delta(z_0)$ changes when it passes a trisecant level. Then
\[
    0=\delta(+\infty)-\delta(-\infty)=\sum_{s\in\mathcal S^{adm}_3(L)}-\sgn(s)d(s)=-d(\sigma_3(s)).
\]
Thus, $\sigma_3(L)$ is a cycle.
\end{proof}

Now, we prove the invariance of $\sigma_3(L)$. Let $f_t\colon\R^3\to\R^3$, $t\in[0,1]$, be a generic isotopy between links $L$ and $L'$. If the links $L_t=f_t(L)$ do not have singular secants whose perturbations include $3$-secants, then the horizontal $3$-secants in $L_t$ form continuous $1$-parametric families, i.e. isotopies of the $3$-secants. Then $(f_1)_*(\sigma_3(L))=\sigma_3(L')$.

Look what happens at singular secants. Let $L_{t_0}$, $t_0\in(0,1)$, be a link with a singular secant. Choose a small $\epsilon>0$ and denote $g=f_{t_0+\epsilon}\circ f_{t_0-\epsilon}^{-1}$ for the isotopy between $L_{t_0-\epsilon}$ and $L_{t_0+\epsilon}$.

If $L_{t_0}$ contains a transversal $3$-secant on an extremum (Fig.~\ref{fig:tangent_min_3s_resolution}), we have an isotopy of the trisecant. Hence, $\sigma_3(L_{t_0+\epsilon})=g_*(\sigma_3(L_{t_0+\epsilon}))$.
\begin{figure}[h]
    \centering
    \includegraphics[width=0.8\linewidth]{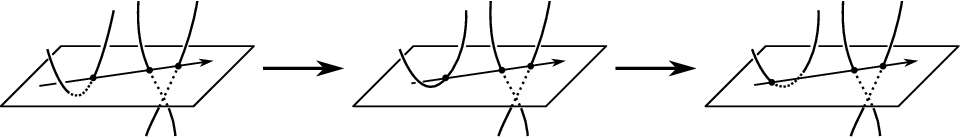}
    \caption{A transversal $3$-secant on an extremum}
    \label{fig:minimum_3secant_resolution}
\end{figure}

If $L_{t_0}$ contains a tangent $2$-secant (Fig.~\ref{fig:tangent_min_3s_resolution}), $\sigma_3(L_{t_0+\epsilon})$ differs from $g_*(\sigma_3(L_{t_0-\epsilon}))$ by a $3$-secant obtained by perturbation of the tangent $2$-secant. But this $3$-secant belongs to $\tilde{\mathdutchcal D}(L_{t_0+\epsilon},3)$, hence, $\sigma_3(L_{t_0+\epsilon})=g_*(\sigma_3(L_{t_0+\epsilon}))$.

\begin{figure}[h]
    \centering
    \includegraphics[width=0.8\linewidth]{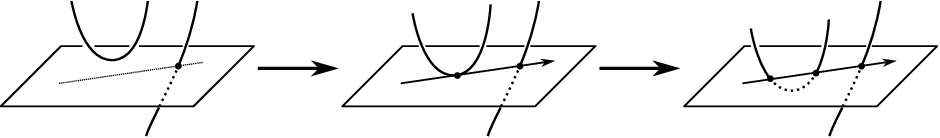}
    \caption{A tangent $2$-secant}
    \label{fig:tangent_min_3s_resolution}
\end{figure}

If $L_{t_0}$ contains a zero $3$-secant (Fig.~\ref{fig:zero_3s_resolution}), then it produces two isotopic $3$-secants with different signs which annihilate in the sum for $\sigma_3(L_{t_0+\epsilon})$. Hence, $\sigma_3(L_{t_0+\epsilon})=g_*(\sigma_3(L_{t_0-\epsilon}))$.
\begin{figure}[h]
    \centering
    \includegraphics[width=0.8\linewidth]{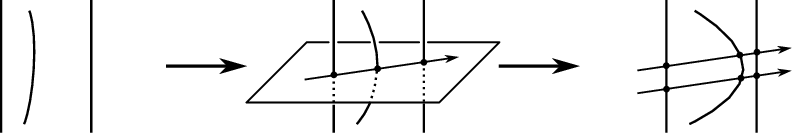}
    \caption{A zero $3$-secant}
    \label{fig:zero_3s_resolution}
\end{figure}

If $L_{t_0}$ contains a $4$-secant on $\alpha_i(z)=(p_i(z),z)$, $i=1,2,3,4$. A neighborhood of the $4$-secant is a graph of the movement of four points $p_i(z)$ in the plane (Fig.~\ref{fig:4secant_resolution}).

\begin{figure}[h]
    \centering
    \includegraphics[width=0.8\linewidth]{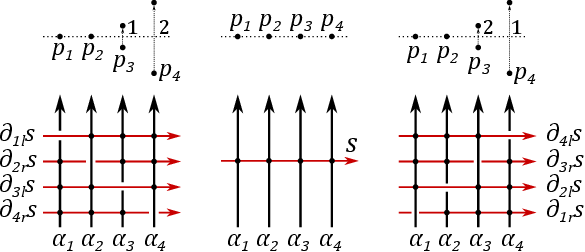}
    \caption{A $4$-secant and its perturbations}
    \label{fig:4secant_resolution}
\end{figure}

Assume that in $L_{t_0-\epsilon}$ the point $p_3$ moves early than $p_4$ (Fig.~\ref{fig:4secant_resolution} left), and in $L_{t_0+\epsilon}$ $p_3$ moves later than $p_4$ (Fig.~\ref{fig:4secant_resolution} right). Then the sum $\sigma_3(L_{t_0-\epsilon})$ includes positive $3$-secants $\partial_{1l}s, \partial_{2r}s, \partial_{3l}s, \partial_{4r}s$, and $\sigma_3(L_{t_0+\epsilon})$ includes positive $3$-secants $\partial_{1r}s, \partial_{2l}s, \partial_{3r}s, \partial_{4l}s$. Hence,
\begin{multline*}
    \sigma_3(L_{t_0+\epsilon})-g_*(\sigma_3(L_{t_0-\epsilon}))=\partial_{1r}s+ \partial_{2l}s+\partial_{3r}s+\partial_{4l}s-\\(\partial_{1l}s+\partial_{2r}s+\partial_{3l}s+\partial_{4r}s)=ds.
\end{multline*}

Thus, the secant class $\sigma_3$ is invariant when passing a singular value that implies the invariance in general case.

\subsection{4-secants}\label{subsect:4s_proof}

\begin{lemma}\label{lem:4secant_cocycle}
The chain $\sigma_4(L)$ is a cycle in $C^+_4(L,\Z)$.
\end{lemma}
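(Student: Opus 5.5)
We would mimic the sweeping argument of Lemma~\ref{lem:3secant_cocycle}, now sweeping through the $4$-dimensional space $\mathcal P^o$ of all lines by a one-parameter family of hypersurfaces. First, $\sigma_4(L)$ is a well-defined element of $C^+_4(L,\Z)$. By Corollary~\ref{cor:4secant_inverse_sign}, $\sgn(-s)=\sgn(s)$. Moreover $\iota(s)=(-1)^{10}\bar s=\bar s$, and $\bar s$ is the multicrossing of $-s$. Hence the terms for $s$ and $-s$ are identified in $C^+_4$ and the sum is consistent. It remains to show $d\sigma_4(L)=0$ there.

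\textbf{Sweeping by direction.} Parametrize lines by their direction $\kappa\in S^2$ together with a point in $\kappa^\perp$. For a fixed generic direction $\kappa$, the lines parallel to $\kappa$ form the Reidemeister probe set $\mathcal P_\kappa$. The $3$-secants among them are finite, and by the (non-horizontal) analogue of Section~\ref{subsect:2s_proof} the expression
\[
\delta(\kappa)=\sum_{s\in\mathcal S_3(L)\cap\mathcal P_\kappa}\epsilon(s)\,s
\]
is a natural candidate, where the sign $\epsilon(s)$ is built from triple products $(\dot\alpha_i,\dot\alpha_j,\kappa)$. This is however a $0$-dimensional sum, whereas we need a $1$-parameter sweep. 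So instead we take a great circle family: fix a unit vector $\nu$ and sweep the horizontal $3$-dimensional spinner sets $\mathcal P^\bot_{\nu(\theta)}$ as $\nu(\theta)$ rotates along a great circle. Then $\mathcal P^o$ is swept as $\theta$ runs over $[0,\pi]$; every line belongs to some $\mathcal P^\bot_{\nu(\theta)}$.

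\textbf{Wall-crossing.} Set $\delta(\theta)=\sigma_3$ computed with respect to $\nu(\theta)$, viewed as a chain in $C_3$; by Lemma~\ref{lem:3secant_cocycle} it is a cycle in $C^-_3$. The plan is to follow the proof of the invariance of $\sigma_3$, but with the link fixed and the direction $\nu$ moving. The walls are exactly the singular strata listed for the spinner projector.
\begin{itemize}
\item Transversal $3$-secants on an extremum contribute nothing.
\item Tangent $2$-secants produce degenerate $3$-secants in $\tilde{\mathdutchcal D}$.
\item Zero $3$-secants produce cancelling pairs.
\item A transversal $4$-secant $s$ lying in $\mathcal P^\bot_{\nu(\theta_0)}$ changes $\delta$ by $\pm d(s)$, exactly as computed at the end of Section~\ref{subsect:3s_proof}.
\end{itemize}
The sign here must be identified with $\sgn\Delta_4(s)$ times the sign of $\dot\nu$ relative to $s$. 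Doing this via Proposition~\ref{prop:4secant_sign_geometry} is the main obstacle: one must show that the order in which $p_3,p_4$ pass, combined with the direction of rotation, is governed by $or(\alpha_{12\underline3},\alpha_{12\underline4})\,or(\alpha_1,\alpha_2)$ and the ordering factor. Since the wall crossing gives $d(s)\in C_3$ while we want $d$ of a $4$-chain, the identity actually obtained has the form
\[
\delta(\pi)-\delta(0)=\sum_s\pm\sgn(s)\,d(s)=\pm d\,\sigma_4(L).
\]

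\textbf{Closing the sweep.} The endpoint comparison is $\nu(\pi)=-\nu(0)$, so $\mathcal P^\bot_{\nu(\pi)}=\mathcal P^\bot_{\nu(0)}$ as sets. Reversing $\nu$ reverses the signs $\epsilon'$ of all horizontal $3$-secants, since $\Delta_3$ is cubic in $\nu$. Hence $\delta(\pi)=-\delta(0)$ on the nose, not only in homology, and we get $d\sigma_4(L)=\pm2\delta(0)$. This would be wrong unless $\delta(0)$ vanishes in $C^+_3$.

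In $C^+_3$ we have $\iota(s)=(-1)^6\bar s=-\bar s$, i.e.\ $s\equiv-(-s)$. On the other hand, by Corollary~\ref{cor:3secant_inverse_sign}, $\sgn(-s)=-\sgn(s)$, so $s$ and $-s$ enter $\delta(0)$ with coefficients $\sgn(s)$ and $-\sgn(s)$. Therefore the pair contributes $\sgn(s)s-\sgn(s)(-s)=\sgn(s)s+\sgn(s)s=2\sgn(s)s$, which does not vanish. So instead I would verify carefully that the lines of $\mathcal P^\bot_{\nu(0)}$ are counted only once over $\theta\in[0,\pi)$, with each unoriented horizontal secant appearing once, and that the endpoint terms cancel precisely because $d$ intertwines $\iota$ (Corollary~\ref{cor:multicrossing_inversion}). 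This parity and sign bookkeeping, together with the $4$-secant wall sign, is where I expect the real work to be.
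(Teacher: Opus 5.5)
\textbf{Gap.} The sweep has the wrong parity for the positive complex, so it cannot prove the lemma. This is a structural problem, not sign bookkeeping.

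Write $\bar s$ for the multicrossing of the reversed secant $-s$. Since $(-1)^{6}=+1$, we have $\iota(\gamma)=\bar\gamma$ on $C_3$. So in $C^+_3$ one has $s\equiv\bar s$, not $s\equiv-\bar s$ as you wrote.

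Because $\sgn(-t)=-\sgn(t)$ for trisecants, every $\delta(\theta)$ has the form $z-\iota z$ and is zero in $C^+_3$. That removes your endpoint worry, but it also kills everything else. Look at a wall where a horizontal $4$-secant line appears. Both orientations of the nearby trisecants jump. If the orientation $s$ contributes $c_s\,ds$, the reversed trisecants carry opposite signs and contribute $-\overline{c_s\,ds}=-c_s\,d\bar s$. So the jump is $c_s\,d(s-\bar s)$, and consistency forces $c_{-s}=-c_s$.

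Since $\sgn(-s)=\sgn(s)$ for $4$-secants, the factor $c_s/\sgn(s)$ is odd under reversal. It is exactly the ``sign of $\dot\nu$ relative to $s$'' that you mention and then drop. So even granting the wall-sign computation you leave open, the sweep gives $\delta(\pi)-\delta(0)=-2\delta(0)=\pm d\tau$, where $\tau=\sum_s\sgn(s)\eta(s)\,s$ with $\eta(-s)=-\eta(s)$. Hence $\iota\tau=-\tau$, and the sweep never produces the $\iota$-invariant chain $\sigma_4(L)$.

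In $C^+_*$ both $\tau$ and $\delta(0)$ lie in the image of $id-\iota$, so your identity reads $0=0$ and says nothing about $d\sigma_4(L)$. In $C^-_*$ it only recovers that $2\sigma_3$ is null-homologous. That already follows from invariance of $\sigma_3$ under the isotopy rotating $\nu$ to $-\nu$, together with $\sigma_3^{-\nu}=-\sigma_3^{\nu}$.

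\textbf{Missing idea.} You need an auxiliary family of $3$-chains whose weights are reversal-invariant, matching $\sgn(-s)=\sgn(s)$ for $4$-secants. The paper does this as follows.
\begin{itemize}
\item For a point $p\in L$, take the chain $\delta(p)$ of all trisecants having $p$ as their middle point. Weight each by the $2$-secant sign $\sgn(\alpha_1,\alpha_3)$ of its outer arcs, which is invariant under reversal.
\item Drag $p$ once around each component. In the central projection from $p$, flattening points, tangent directions and antipodal second Reidemeister moves give degenerate or cancelling terms. So $\delta(p)$ changes only at antipodal third Reidemeister moves, i.e.\ at $4$-secants.
\item Each $4$-secant $s$ is met twice, when $p$ passes its second point and its third point. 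By Proposition~\ref{prop:4secant_sign_geometry} and Remark~\ref{rem:4secant_geom}, the two jumps add up to $\sgn(s)\,ds$.
\item Closing the loops gives $0=\sum_s\sgn(s)\,ds=d\sigma_4(L)$.
\end{itemize}
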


\begin{proof}
    Let $L$ be a link in general position. For a point $p\in L$ consider the set $\mathcal S_3^{mid}(L,p)$ of transversal $3$-secants of $L$ for which $p$ is the middle intersection point. For each $s\in\mathcal S_3^{mid}(L,p)$ on arcs $\alpha_1(t),\alpha_2(t),\alpha_3(t)$, denote $\epsilon'(s)=\sgn(\alpha_1,\alpha_3)$. Consider the multicrossing chain defined by the formula
\[
\delta(p)=\sum_{s\in\mathcal S_3^{mid}(L,p)}\epsilon'(s)s.
\]
Since $\epsilon'(-s)=\sgn(s)$, $\delta(p)\in C^-_3(L,\Z)$.
Move the point $p$ along the oriented link and look how $\delta(p)$ changes.

Consider the central projection on a sphere $S$ with the center $p$. For a generic $p$ the projection of $L$ is a $4$-valent graph with two antipodal vertices of valency one that correspond to the tangent line in $p$. For a point $q\ne p$, besides its central projection in $S$ we consider its antipodal projection. Then the elements of $\mathcal S_3^{mid}(L,p)$ correspond to intersections of an arc with an antipodal arc of the projection of $L$. We call these intersections antipodal crossings.

During movement of $p$, the following transformations which change antipodal points can occur: $p$ passes over a flattening point, an arc passes over the tangent direction of $p$, a second or a third antipodal Reidemeister moves (Fig.~\ref{fig:middle_3secant_moves}).
\begin{figure}[h]
    \centering
    \includegraphics[width=0.8\linewidth]{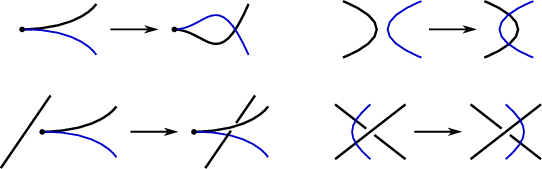}
    \caption{Moves of the central projection. Antipodal arcs are displayed in blue color.}
    \label{fig:middle_3secant_moves}
\end{figure}

The firs two transformations (Fig.~\ref{fig:middle_3secant_moves} left) produce a $3$-secant with an intersection point close to $p$. Then this trisecant belongs to $\tilde{\mathdutchcal D}(L_{t_0+\epsilon},3)$ and does not affect $\delta(p)$.

An antipodal second Reidemeister move produces two isotopic $3$-secants with opposite signs which annihilate and do not change $\delta(p)$.

An antipodal third Reidemeister move generates a triple point in the movement that corresponds a $4$-secant of $L$. Let’s examine this case more carefully.

Let $s$ be a $4$-secant of $L$ on arcs $\alpha_i(t)$, $i=1,2,3,4$. Since $L$ is generic, we suppose that the sign of $s$ is not zero, and the triples $\dot\alpha_i$, $\dot\alpha_j$, $\kappa$, $i<j$, are independent. Here $\kappa=\frac{\alpha_2-\alpha_1}{|\alpha_2-\alpha_1|}$ is the direction of the $4$-secant.

The $4$-secant $s$ appears as an antipodal third Reidemeister move for points that moves along arcs $\alpha_2$ and $\alpha_3$. For a small $\epsilon>0$, denote $p_\pm=\alpha_2(\pm\epsilon)$ and $q_\pm=\alpha_3(\pm\epsilon)$. We will calculate the difference $\delta(p_+)-\delta(p_-)$.

Assume that $\sgn(s)>0$ and $\sgn(\alpha_1,\alpha_2)>0$. Like in section~\ref{subsect:secant_sign}, consider the projection of trisecants in a plane $\Pi$ transversal to $\kappa$ (Fig~\ref{fig:s4_boundary}). By Proposition~\ref{prop:4secant_sign_geometry} we have $or(\alpha_{12\underline{3}},\alpha_{12\underline{4}})<0$.
\begin{figure}[h]
    \centering
    \includegraphics[width=0.3\linewidth]{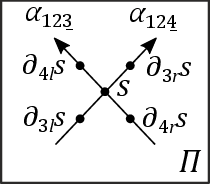}
    \caption{Projection of $3$-secants. The vector $\kappa$ is pointed towards the reader.}
    \label{fig:s4_boundary}
\end{figure}
The crossings is the intersection point of $s$ with $\Pi$, and the $3$-secants correspond to the incident edges. By definition of the multicrossing complex, these $3$-secants are differentials $\partial_{i,u} s$, $i\in\{3,4\}$, $u\in\{l,r\}$, of $s$.

On the other hand, the trisecants are the summands which differ $\delta(p_+)$ and $\delta(p_-)$. In order to see to which point a trisecant is assigned, reorient the curves $\alpha_{12\underline{3}},\alpha_{12\underline{4}}$ according the parameter of $\alpha_2(t)$. Then the incoming edges correspond to $\delta(p_-)$ and the outcoming ones correspond to $\delta(p_+)$.
By Remark~\ref{rem:4secant_geom} change of the orientation of $\alpha_{12\underline{i}}$ is determined by the sign
\[
\sgn(\alpha_1,\alpha_2)\cdot\sgn(\alpha_1,\alpha_i)=\sgn(\alpha_1,\alpha_i),\quad i=3,4.
\]
Then
\begin{multline*}
   \delta(p_+)-\delta(p_-)= \sgn(\alpha_1,\alpha_3)(\sgn(\alpha_1,\alpha_3)\partial_{4l}s-\sgn(\alpha_1,\alpha_3)\partial_{4r}s)+\\
   \sgn(\alpha_1,\alpha_4)(\sgn(\alpha_1,\alpha_4)\partial_{3r}s-\sgn(\alpha_1,\alpha_4)\partial_{3l}s)=\\
   -\partial_{3l}s+\partial_{3r}s+\partial_{4l}s-\partial_{4r}s.
\end{multline*}
Considering $4$-secant $-s$, we get an analogous formula for the points $q_\pm$:
\[
\delta(q_+)-\delta(q_-)=-\partial_{3l}(-s)+\partial_{3r}(-s)+\partial_{4l}(-s)-\partial_{4r}(-s)=-\partial_{2r}s+\partial_{2l}s+\partial_{1r}s-\partial_{1l}s.
\]
Thus,
\[
\sgn(s)ds=ds=\delta(p_+)-\delta(p_-)+\delta(q_+)-\delta(q_-).
\]
Other configurations of the $4$-secant $s$ give the same relation.

Now, choose a point $p_i$ in each component $K_i$, $i=1,\dots,m$, of the link $L$, pull $p_i$ along the component and look at the full change in the chains $\delta(p_i)$. Since $\delta(p_i)$ changes only at $4$-secants, we have
\[
0=\sum_{i=1}^m(\delta(p_i)-\delta(p_i))=\sum_{s\in\mathcal S_4^{adm}(L)}\sgn(s)ds=d(\sigma_4(L)).
\]
Thus, $\sigma_4(L)$ is a cycle.
\end{proof}

In order to prove invariance of $\sigma_4(L)$, consider a generic isotopy $f$ from a link $L$ to a link $L'$. We can only focus on cases when the isotopy passes a singularity involving a $4$-secant.

If $f$ passes a tangent $k$-secant, $k<4$, then any $4$-secant which can appear by a perturbation, has a pair of close intersection points, i.e. it belongs to $\tilde{\mathdutchcal D}(L,4)$. Hence, $f$ does not change the secant cycle.

Let $f$ passes a zero $4$-secant on arcs $\alpha_i(t)$, $i=1,2,3,4$. We can assume that $\sgn(\alpha_1,\alpha_2)\ne 0$. Then the projection of a neighbourhood of the $4$-secant in a plane transversal to it, looks like in Fig.~\ref{fig:4secant_zero}.
\begin{figure}[h]
    \centering
    \includegraphics[width=0.6\linewidth]{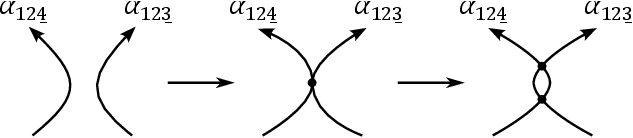}
    \caption{Projection of a zero $4$-secant}
    \label{fig:4secant_zero}
\end{figure}
Then a perturbation of the zero $4$-secant either has no $4$-secants or produces a pair of isotopic $4$-secants of opposite signs. These $4$-secants annihilate and do not affect $\sigma_4(L)$.

Let $f$ passes a $5$-secant $s$ on arcs $\alpha_i(t)$, $i=1,2,3,4,5$. We can suppose that the signs of the secants $\partial_{i,u}s$, $1\le i\le 5$, $u\in\{l,r\}$, are not zero, and that the vectors $\dot\alpha_i$, $\dot\alpha_j$, $\kappa$, $i<j$, are independent.

The projection of the isotopy $f$ in the plane $\Pi$ transversal to $s$ looks like a third Reidemeister move (Fig.~\ref{fig:5secant_boundary}).
\begin{figure}[h]
    \centering
    \includegraphics[width=0.7\linewidth]{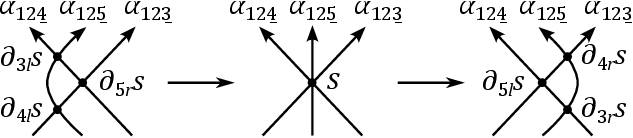}
    \caption{Projection of an isotopy at a $5$-secant}
    \label{fig:5secant_boundary}
\end{figure}
The crossings in the left and the right diagrams correspond to $4$-secants in $L$ and $L'$. By Proposition~\ref{prop:4secant_sign_geometry} and Remark~\ref{rem:4secant_geom}, for the configuration shown in Fig.~\ref{fig:5secant_boundary} we have
\begin{gather*}
\epsilon(\partial_{3l}s)=\epsilon(\partial_{3r}s)=\sgn(\alpha_1,\alpha_2),\\
\epsilon(\partial_{4l}s)=\epsilon(\partial_{4r}s)=\epsilon(\partial_{5l}s)=\epsilon(\partial_{5r}s)=-\sgn(\alpha_1,\alpha_2).
\end{gather*}

In order to catch the $4$-secants which do not intersect $\alpha_1(t)$ or $\alpha_2$, consider the $5$-secant $-s$ on the arcs $\alpha'_i(t)$, $1\le i\le 5$, where $\alpha'_i(t)=\alpha_{6-i}(t)$. Assume that the projection for $-s$ looks like as in Fig.~\ref{fig:5secant_boundary1} (we used there the relations $\partial_{i,u}(-s)=\partial_{6-i,\bar u}s$). Note that we should have the $4$-secant $\partial_{3r}(-s)=\partial_{3l}s$ in the left diagram (i.e. before the $5$-secant).
\begin{figure}[h]
    \centering
    \includegraphics[width=0.7\linewidth]{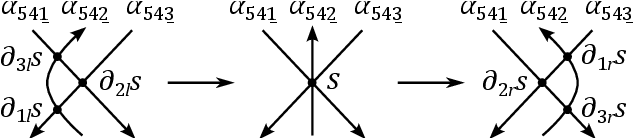}
    \caption{Projection of the isotopy at $-s$. }
    \label{fig:5secant_boundary1}
\end{figure}
By Proposition~\ref{prop:4secant_sign_geometry}
\begin{gather*}
\epsilon(\partial_{1l}s)=\epsilon(\partial_{1r}s)=\epsilon(\partial_{3l}s)=\epsilon(\partial_{3r}s)=\sgn(\alpha_4,\alpha_5),\\
\epsilon(\partial_{2l}s)=\epsilon(\partial_{2r}s)=-\sgn(\alpha_4,\alpha_5).
\end{gather*}

The relations on $\epsilon(\partial_{3l}s)$ imply $\sgn(\alpha_1,\alpha_2)=\sgn(\alpha_4,\alpha_5)$. Then
\begin{multline*}
\sigma_4(L')-f_*(\sigma_4(L))= \\(\epsilon(\partial_{1r}s)\partial_{1r}s+\epsilon(\partial_{2r}s)\partial_{2r}s+\epsilon(\partial_{3r}s)\partial_{3r}s+\epsilon(\partial_{4r}s)\partial_{4r}s+\epsilon(\partial_{5l}s)\partial_{5l}s)-\\(\epsilon(\partial_{1l}s)\partial_{1l}s+\epsilon(\partial_{2l}s)\partial_{2l}s+\epsilon(\partial_{3l}s)\partial_{3l}s+\epsilon(\partial_{4l}s)\partial_{4l}s+\epsilon(\partial_{5r}s)\partial_{5r}s)=\\
\sgn(\alpha_1,\alpha_2)(-\partial_{1l}s+\partial_{1r}s+\partial_{2l}s-\partial_{2r}s-\partial_{3l}s+\partial_{3r}s+\partial_{4l}s-\partial_{4r}s-\partial_{5l}s+\partial_{1r}s)=\\
\sgn(\alpha_1,\alpha_2)ds.
\end{multline*}
Thus, $\sigma_4(L')=f_*(\sigma_4(L))\in H^+_4(L',\Z)$. The other configurations of the $5$-secant are checked analogously.


\section{$(m,l)$-invariants}\label{sect:ml_invariants}

The multicrossing complex is infinite dimensional, therefore, to get a computable invariant, we use the idea of quandle colorings and consider maps to finite‑dimensional complexes, which will give us a series of knot invariants.

\subsection{$(m,l)$-homology of a group}

Denote the permutation group on $n$ elements by $\Sigma_n$.

Let $G$ be a group and $m,l\in G$ two commuting elements. For $n\ge 0$, consider the quotient sets $\tilde{\mathcal C}_n(G,m,l)$ of the product $\Sigma_n\times G^{\times (n+1)}$ by the relations:
\begin{gather*}
    (\sigma,g_0,\dots, g_{i-1}m,g_i,\dots,g_n)\sim  (\sigma,g_0,\dots, g_{i-1},mg_i,\dots,g_n),\quad i=1,\dots,n,\\
(\sigma(1)\sigma(2)\cdots\sigma(n),g_0,\dots,g_n)\sim\\
 (\sigma(2)\cdots\sigma(n)\sigma(1),g_0,\dots,g_{\sigma(1)-1},g_{\sigma(1)}l^{-1},lg_{\sigma(1)+1},g_{\sigma(1)+2},\dots, g_n).
\end{gather*}
Let ${\mathcal D}_{n,i}(G,m,l)$, $1\le i\le n-1$, be the subset of $\tilde{\mathcal C}_n(G,m,l)$ generated by the elements
\[
(\sigma,g_0,\dots,g_{i-1}, 1, g_{i+1},\dots,g_n)\mbox{ such that } |\sigma^{-1}(i)-\sigma^{-1}(i+1)|=1.
\]
Denote ${\mathcal D}_{n}(G,m,l)=\bigcup_{i=1}^{n-1}{\mathcal D}_{n,i}(G,m,l)$.

Consider the maps $\partial^n_{i,l}, \partial^n_{i,l}\colon \tilde{\mathcal C}_n(G,m,l)\to \tilde{\mathcal C}_{n-1}(G,m,l)$ defined by the formulas
\begin{gather*}
    \partial^n_{i,l}(\sigma,g_0,\dots,,g_n)=(\partial_i\sigma,g_0,\dots,g_{i-2}, g_{i-1}g_i, g_{i+1},\dots,g_n),\\
    \partial^n_{i,r}(\sigma,g_0,\dots,,g_n)=(\partial_i\sigma,g_0,\dots,g_{i-2}, g_{i-1}m^{-1}g_i, g_{i+1},\dots,g_n),
\end{gather*}
where
\[
(\partial_i\sigma)(k)=\left\{\begin{array}{cl}
    \sigma(k), & \sigma(k)<i\mbox{ and }k<\sigma^{-1}(i), \\
    \sigma(k)-1, & \sigma(k)> i\mbox{ and }k<\sigma^{-1}(i), \\
    \sigma(k+1), & \sigma(k+1)<i\mbox{ and }k\ge \sigma^{-1}(i), \\
    \sigma(k+1)-1, & \sigma(k+1)> i\mbox{ and }k\ge \sigma^{-1}(i),
\end{array}\right.\quad 1\le k\le n-1.
\]

For an abelian group $A$, consider the quotient complex
\[
    C_n(G,m,l;A)=\widetilde{\mathcal C}_n(G,m,l)\otimes A/{\mathcal D}_n(G,m,l)\otimes A
\]
with the differential
\[
d(\gamma)=\sum_{i=1}^n(-1)^i(\partial^n_{i,l}(\gamma)-\partial^n_{i,r}(\gamma)),
\]
and consider the complex
\[
    C^n(G,m,l;A)=\{\phi\in Hom(\widetilde{\mathcal C}_n(G,m,l),A)\mid \phi|_{{\mathcal D}_n(G,m,l)}=0\}
\]
with the differential $(d\phi)(\gamma)=\phi(d\gamma)$.

\begin{definition}\label{def:homotopy_crossing_homology}
    The homology $H_*(G,m,l;A)=H(C_*(G,m,l;,A))$ and $H^*(G,m,l;A)=H(C^*(G,m,l;A))$ are called the \emph{$(m,l)$-homology} and \emph{$(m,l)$-cohomology} of the group $G$ with coefficients in $A$.
\end{definition}

Consider the involution $c\mapsto \bar c$ on  $\tilde{\mathcal C}_{n}$ defined by the formula
\[
 \overline{(\sigma,g_0,\dots,g_n)}=(\bar\sigma,g_n^{-1},mg_{n-1}^{-1},\dots,mg_0^{-1})
\]
where $\bar\sigma(k)=n+1-\sigma(k)$, $1\le k\le n$.

Let $\iota(c\otimes a)=(-1)^{\frac{n(n+1)}2}\bar c\otimes a$, $c\otimes a\in C_n(G,m,l;A)$. Denote $C^\pm_*(G,m,l;A)=C_*(G,m,l;A)/(id\mp\iota)$.

\begin{definition}
    The homology $H^\pm_*(G,m,l;A)$ of the complex $C^\pm_*(G,m,l;A)$ is called the \emph{positive} (\emph{negative}) $(m,l)$-homology of $G$ with coefficients in $A$.

Analogously, one defines the positive $(m,l)$-cohomology $H_+^*(G,m,l;A)$ and the negative $(m,l)$-homology $H_+^*(G,m,l;A)$ of the group $G$.
\end{definition}

In the case $A=\Z$, we will omit the coefficient group and write $H_*(G,m,l)$, $H^\pm_*(G,m,l)$ etc.

\begin{example}\label{exa:S3_ml_homology}
Let $G=\Sigma_3$ be the permutation group, $m=(12)$ and $l=1$. The $(m,l)$-homology of the group is the following.
\[
\begin{array}{|c|c|c|c|}
\hline
    n & H_n(G,m,l) & H^+_n(G,m,l) & H^-_n(G,m,l)\\
    \hline
    0 & \Z & \Z & \Z_2 \\
    1 & \Z & 0 & \Z \\
    2 & \Z_3 & \Z_2^2 & \Z_3 \\
    3 & \Z_2^2\oplus\Z_6^2 & \Z_2^2\oplus\Z_6 & \Z_2^2\oplus\Z_6 \\
    4 & \Z^{12}\oplus\Z_3^3 & \Z^{7}\oplus\Z_6^2 &  \Z^{5}\oplus\Z_2^7 \oplus\Z_6 \\
    \hline
\end{array}
\]
\end{example}

Let us show how multicrossing homology relates to $(m,l)$-homology.

Let $K$ be an oriented knot which lies inside a ball $B\subset\R^3$. Let $N(K)$ be a tubular neighborhood of $K$, and $M_K=\overline{B\setminus N(K)}$.

Choose a point $x_0\in S=\partial B$. Denote $G_K=\pi_1(M_K,x_0)$. Then choose a point $y_0\in\partial N(K)$ and a path $\zeta$ in $M_K$ connecting $x_0$ with $y_0$. Let $\mu,\lambda\subset\partial N(K)$ be a meridian and a longitude of $K$ which pass through $y_0$. Consider the homotopy classes $m_K=[\zeta\mu\zeta^{-1}]$ and $l_K=[\zeta\lambda\zeta^{-1}]$ in $\pi_1(M_K,x_0)$.

\begin{proposition}\label{prop:multicrossing_is_ml_homology}
For any abelian group $A$, the homotopy multicrossing homology $H_*(K,A)$ is isomorphic to the $(m,l)$-homology $H_*(G_K,m_K,l_K;A)$.
\end{proposition}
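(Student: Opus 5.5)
We construct an explicit isomorphism of semi-cubical sets $\Phi\colon\widetilde{\mathdutchcal C}(K,n)\to\tilde{\mathcal C}_n(G_K,m_K,l_K)$. It should commute with all face maps $\partial^n_{i,l},\partial^n_{i,r}$ and carry $\widetilde{\mathdutchcal D}(K,n)$ onto $\mathcal D_n(G_K,m_K,l_K)$. Passing to the quotient complexes tensored with $A$, it then gives a chain isomorphism $C_*(K,A)\cong C_*(G_K,m_K,l_K;A)$, and hence the isomorphism in homology.

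\textbf{Construction of $\Phi$.} Let $\gamma=(\gamma_0,\dots,\gamma_n)$, with meridians $\mu_1,\dots,\mu_n$ at distinct points of $K$.
\begin{enumerate}
\item Starting at the base meridian $\mu$ through $y_0$ and following the orientation of $K$, the $n$ meridians are met in some cyclic order. Reading them off defines a permutation $\sigma\in\Sigma_n$: $\sigma(k)$ is the index $i$ of the $k$-th meridian encountered.
\item For each $i$ choose a path $\eta_i\subset\partial N(K)$ from $y_0$ to the point $\gamma_{i-1}(1)\in\mu_i$. We take it to run parallel to the longitude along $K$ and to avoid the other meridians; this is possible up to the canonical cut at $\mu$.
\item Put $g_0=[\gamma_0\eta_1^{-1}\zeta^{-1}]$, $g_i=[\zeta\eta_i\gamma_i\eta_{i+1}^{-1}\zeta^{-1}]$ for $1\le i\le n-1$, and $g_n=[\zeta\eta_n\gamma_n]$. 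Here $\gamma_i$ starts at $\gamma_{i-1}(1)$ after travelling along $\mu_{i,l}$; to make this precise we use the left arc $\mu_{i,l}$ as the connector.
\end{enumerate}

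\textbf{Well-definedness and bijectivity.} The choices in this construction are ambiguous in two ways.
\begin{itemize}
\item The point at which $\gamma_i$ starts on $\mu_i$ is only determined up to winding around $\mu_i$. Shifting it changes $g_{i-1}\mapsto g_{i-1}m$ and $g_i\mapsto m^{-1}g_i$, which is the first relation.
\item The position of the cut point $y_0$ relative to the meridians is also a choice. Pushing a meridian $\mu_{\sigma(1)}$ across the base meridian (around the knot) cyclically rotates $\sigma$ and conjugates by the longitude. This is the second relation, with $l$ inserted at index $\sigma(1)$.
\end{itemize}
Since $m_K$ and $l_K$ commute (they come from the boundary torus), these identifications are consistent. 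We then check that $\Phi$ is bijective. Homotopy classes of tuples of paths with endpoints on the torus correspond, after fixing the connecting paths $\eta_i$, exactly to elements of $G_K$ modulo these two ambiguities. This uses that $\partial N(K)$ is a torus with $\pi_1$ generated by $m_K$ and $l_K$.

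\textbf{Compatibility with faces and degeneracies.} The face $\partial_{i,l}$ concatenates $\gamma_{i-1}\mu_{i,l}\gamma_i$ and deletes $\mu_i$. Under $\Phi$ this becomes $g_{i-1}g_i$, with $\sigma$ replaced by $\partial_i\sigma$. Using $\mu_{i,r}^{-1}$ instead of $\mu_{i,l}$ differs by one full meridian, which gives $g_{i-1}m^{-1}g_i$. Finally, $\gamma\in\widetilde{\mathdutchcal D}_i$ means that $\gamma_i$ is homotopic to a boundary arc between adjacent meridians $\mu_i,\mu_{i+1}$. This translates into $g_i=1$ together with $|\sigma^{-1}(i)-\sigma^{-1}(i+1)|=1$.

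\textbf{Main obstacle.} The delicate step is the bookkeeping of the base cut and the longitude. We must verify that the cyclic relation is exactly $g_{\sigma(1)}\mapsto g_{\sigma(1)}l^{-1}$ and $g_{\sigma(1)+1}\mapsto lg_{\sigma(1)+1}$ with the stated orientation conventions. We must also check that the face formula for $\partial_i\sigma$ matches the case when the deleted meridian is adjacent to the cut. We would handle this by first fixing a normal form in which $\sigma(1)$ corresponds to the meridian just after $y_0$, then checking each face directly in that normal form, using the cyclic relation to return to normal form.
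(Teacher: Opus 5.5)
Your proposal is correct and is essentially the paper's proof. The paper likewise reads $\sigma(\gamma)$ off the order of the meridians starting from $y_0$, collapses the left arcs $\mu_{i,l}$, slides the junction points to $y_0$ inside $\partial N(K)$ (your paths $\eta_i$), closes up with $\zeta$, and then simply asserts that $\Phi$ is a bijection compatible with $\partial_{i,l},\partial_{i,r}$ and $\widetilde{\mathdutchcal D}$; your account of where the two relations come from is more explicit than the paper's.

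Two small corrections. First, $\eta_i$ cannot avoid the meridians lying between $\mu$ and $\mu_i$. What you need is that it never crosses the base meridian $\mu$ and depends only on its endpoint, not on the other $\mu_j$, so that the surviving $\eta_j$ are unchanged by a face map. Second, when you do the longitude bookkeeping you will find that $l$ enters the two loops $g_{\sigma(1)-1}$ and $g_{\sigma(1)}$ adjacent to $\mu_{\sigma(1)}$, exactly as $m$ does in the first relation. So the indices $\sigma(1),\sigma(1)+1$ printed in the paper's second relation are off by one; for $\sigma(1)=n$ they would refer to a nonexistent $g_{n+1}$.
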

\begin{proof}
    We construct an isomorphism between the multicrossing complex and the $(m,l)$-complex.

Let $\gamma=(\gamma_0,\dots,\gamma_n)\in\widetilde{\mathcal C}(L,n)$. We can suppose that $\gamma_0(0)=\gamma_n(1)=x_0$. Let $\sigma(\gamma)$ be the sequence of numbers of the meridians of $\gamma$ that one encounters while traveling along the knot from the point $y_0$. Contract the left arcs of the meridians $\mu_{i,l}$ to points so that $\gamma_{i-1}(1)=\gamma_i(0)$ and move these point in $\partial N(K)$ to $y_0$. Then use the path $\zeta$ to get loops $\tilde\gamma_0,\dots,\tilde\gamma_n$ based in $x_0$.
Denote
\[
\Phi(\gamma)=(\sigma(\gamma), \tilde\gamma_0,\dots,\tilde\gamma_n)\in\tilde{\mathcal C}_n(G_K,m_K,l_K).
\]
Then $\Phi$ is a bijection between $\widetilde{\mathcal C}(L,n)$ and $\tilde{\mathcal C}_n(G_K,m_K,l_K)$. Moreover, $\Phi(\widetilde{\mathdutchcal D}(L,n))={\mathcal D}_n(G_K,m_K,l_K)$, and $\Phi$ commutes with maps $\partial_{i,l}, \partial_{i,r}$. Hence, $\Phi$ defines an isomorphism of complexes $C_*(L,A)$ and $C_*(G_K,m_K,l_K;A)$ and induces an isomorphism of the homologies.
\end{proof}

\begin{remark}\label{rem:multicrossing_is_ml_homology}
1. The elements $m_K,l_K$ and the isomorphism $\Phi$ depend on the homotopy class of the path $\zeta$. For another choice $\zeta'$ of the path, one gets elements $m'_K,l'_K$ and an isomorphism $\Phi'$  which are conjugated to  $m_K,l_K$ and $\Phi$ by the homotopy class $[\zeta'\zeta^{-1}]\in\pi_1(M_K,x_0)$.

2. The secant classes $\sigma_i(L)$, $i=2,3,4$, can be identified with $(m_K,l_K)$-homology classes in $H_2(G_K,m_K,l_K)$, $H_3^-(G_K,m_K,l_K)$, $H_4^+(G_K,m_K,l_K)$
\end{remark}

\subsection{Secant $(m,l)$-invariants}

Let $K$ be an oriented knot and $H_*(G_K,m_K,l_K)$ its multicrossing homology in the form of $(m,l)$-homology.

Let $G$ be a finite group and $m,l\in G$ commuting elements. Denote
\[
Hom_{m,l}(G_K,G)=\{\phi\in Hom(G_K,G)\mid \phi(m_K)=m,\ \phi(l_K)=l\}.
\]

\begin{definition}\label{def:secant_ml_class}
    The \emph{secant $(m,l)$-invariants} of the knot $K$ are defined by the formula
\begin{equation}\label{eq:secant_ml_invariant}
    \sigma_i(K;m,l)=\sum_{\phi\in Hom_{m,l}(G_K,G)} \phi(\sigma_i(K)),\quad i=2,3,4.
\end{equation}
Here $\sigma_2(K;m,l)$ is considered as a multi-subset of $H_2(G,m,l)$, i.e. an element of $\Z_{\ge 0}[H_2(G,m,l)]$, $\sigma_3(K;m,l)$ is considered as a multi-subset of $H_3^-(G,m,l)$, and $\sigma_4(K;m,l)$ is considered as a multi-subset of $H_4^+(G,m,l)$.
\end{definition}

\begin{theorem}\label{thm:secant_ml_invariant}
    The secant $(m,l)$-invariants $\sigma_i(K;m,l)$, $i=2,3,4$, are knot invariants.
\end{theorem}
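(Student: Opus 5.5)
The plan is to reduce Theorem~\ref{thm:secant_ml_invariant} to Theorem~\ref{thm:secant_class_invariance} by checking that the sum in~\eqref{eq:secant_ml_invariant} depends only on the isotopy class of $K$. Three ingredients are needed.

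First, every $\phi\in Hom_{m,l}(G_K,G)$ induces a chain map $\phi_\#\colon C_*(G_K,m_K,l_K;\Z)\to C_*(G,m,l;\Z)$. It is given by $(\sigma,g_0,\dots,g_n)\mapsto(\sigma,\phi(g_0),\dots,\phi(g_n))$. The defining relations of $\tilde{\mathcal C}_n$ involve only multiplication by $m_K$ and $l_K$, and $\phi$ sends these to $m$ and $l$, so the map descends to the quotient sets. The formulas for $\partial_{i,l}$ and $\partial_{i,r}$ use only products and $m^{-1}$, so $\phi_\#$ commutes with them. Since $\phi(1)=1$, it maps $\mathcal D_n(G_K,m_K,l_K)$ into $\mathcal D_n(G,m,l)$. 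The inversion $\overline{(\sigma,g_0,\dots,g_n)}=(\bar\sigma,g_n^{-1},mg_{n-1}^{-1},\dots)$ is also preserved, so $\phi_\#$ commutes with $\iota$ and descends to $C^\pm_*$. Combined with Proposition~\ref{prop:multicrossing_is_ml_homology} and Remark~\ref{rem:multicrossing_is_ml_homology}(2), this makes $\phi(\sigma_i(K))$ a well-defined class in $H_2(G,m,l)$, $H_3^-(G,m,l)$ or $H_4^+(G,m,l)$.

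Second, we handle the choice of the path $\zeta$. By Remark~\ref{rem:multicrossing_is_ml_homology}(1), a different choice $\zeta'$ conjugates $m_K$, $l_K$ and $\Phi$ by $h=[\zeta'\zeta^{-1}]$. Precomposing with this conjugation gives a bijection $\phi\mapsto\phi\circ c_h$ between $Hom_{m,l}(G_K,G)$ computed for $(m_K,l_K)$ and for $(m'_K,l'_K)$, and the image class of $\sigma_i(K)$ is unchanged under this bijection. Hence the multiset $\sigma_i(K;m,l)$ does not depend on $\zeta$. This bookkeeping is the step I expect to require the most care. One must check that the identification $\Phi$ (contracting the arcs $\mu_{i,l}$, sliding the endpoints to $y_0$, then using $\zeta$) transforms exactly by conjugation at every entry $g_j$, and that the base point $y_0$ and the permutation $\sigma$ behave correctly. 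For the latter, the second defining relation (cyclic shift with $l$) is designed to absorb moving $y_0$ along the knot.

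Third, let $f\colon K\to K'$ be an isotopy, extended to an ambient isotopy of $B$ fixed near $x_0$. It induces a homeomorphism $M_K\to M_{K'}$, hence an isomorphism $f_*\colon G_K\to G_{K'}$. Taking $\zeta'=f\circ\zeta$ and $y_0'=f(y_0)$, we get $f_*(m_K)=m_{K'}$ and $f_*(l_K)=l_{K'}$, since meridians and longitudes go to meridians and longitudes under an orientation-preserving isotopy. Thus $\phi\mapsto\phi\circ f_*^{-1}$ is a bijection $Hom_{m,l}(G_K,G)\to Hom_{m,l}(G_{K'},G)$. Under $\Phi$, the induced chain map is the map $f_*$ on the multicrossing complex, and Theorem~\ref{thm:secant_class_invariance} gives $f_*(\sigma_i(K))=\sigma_i(K')$. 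Therefore $(\phi\circ f_*^{-1})(\sigma_i(K'))=\phi(\sigma_i(K))$, and summing over $\phi$ gives equality of the multisets. Finiteness of $G$, together with the finite generation of $G_K$, guarantees that each sum is finite.
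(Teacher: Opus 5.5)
Your argument is essentially the paper's proof. The paper also transports homomorphisms along $\Psi=Ad_{[\zeta(\zeta')^{-1}]}\circ f_*$ and invokes Theorem~\ref{thm:secant_class_invariance}, but it phrases this through the aggregated sum $\sum_{\phi\in Hom(G_K,G)}\phi(\sigma_i(K))$ over all pairs $(m,l)$ and then projects to one summand, whereas you fix $(m,l)$ and separate $\zeta$-independence from the isotopy step (where your choice $\zeta_{K'}=f\circ\zeta$ makes the transport exact, with no conjugation needed).

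The step you flag as delicate is taken on faith in both versions from Remark~\ref{rem:multicrossing_is_ml_homology}(1). If you carry it out, only the interior entries are conjugated by $h$, while $g_0\mapsto g_0h^{-1}$ and $g_n\mapsto hg_n$. So the reindexing $\phi\mapsto\phi\circ c_h$ recovers the old image only up to the chain automorphism $(\sigma,g_0,\dots,g_n)\mapsto(\sigma,xg_0,g_1,\dots,g_{n-1},g_nx^{-1})$ with $x=\phi(h)^{\pm1}$, and neither your argument nor the paper's proves that this automorphism acts trivially on homology.
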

\begin{proof}
    Consider the aggregated sum
\[
\sigma_2(K,G)=\sum_{(m,l)}\sigma_2(K;m,l)\in \bigoplus_{(m,l)}\Z_{\ge 0}[H_2(G,m,l)]
\]
and prove that it is invariant. By definition
\[
\sigma_2(K,G)=\sum_{\phi\in Hom(G_K,G)} \phi(\sigma_2(K)).
\]

Let $K'$ be an isotopic knot and $f$ an isotopy from $K'$ to $K$. The isotopy $f$ moves the path $\zeta_{K'}$ that was used to identify the multicrossing homology and the $(m,l)$-homology of $K'$ to a path $\zeta'$ in $M_K$. Then $\Psi=Ad_{[\zeta(\zeta')^{-1}]}\circ f_*$ is an isomorphism between $H_*(G_{K'},m_{K'},l_{K'})$ and $H_*(G_K,m_K,l_K)$ which identifies the secant classes. On the other hands, $\Psi$ induces a bijection between $Hom(G_{K'},G)$ and $Hom(G_K,G)$. Hence,
\begin{multline*}
\sigma_2(K',G)=\sum_{\phi'\in Hom(G_{K'},G)} \phi'(\sigma_2(K'))=
\sum_{\phi\in Hom(G_K,G)} (\phi\circ\Psi)(\sigma_2(K'))=\\
\sum_{\phi\in Hom(G_K,G)} \phi(\sigma_2(K))=\sigma_2(K,G).
\end{multline*}
Thus, $\sigma_2(K,G)$ is invariant. Then $\sigma_2(K;m,l)$ is invariant as the projection of $\sigma_2(K,G)$ into $\Z_{\ge 0}[H_2(G,m,l)]$.

The proof for $\sigma_3(K;m,l)$ and $\sigma_4(K;m,l)$ is analogous.
\end{proof}

\begin{remark}\label{rem:coloring_polynomial}
Fix an element $m\in G$. Consider an augmentation map
\[
\eta_m\colon\bigoplus_{(m',l')}\Z[H_2(G,m',l')]\to\Z[G]
\]
given by the formula
\[
\eta_m(x)=\left\{\begin{array}{cl}
    1\cdot l', & m'=m \\
    0, & m'\ne m,
\end{array}\right.\quad x\in H_2(G,m',l').
\]
Then $P^m_G(K)=\eta_m(\sigma_2(K,G))$ is the coloring polynomial defined by Eisermann~\cite{E}. As a consequence, $(m,l)$-invariants distinguish the Kinoshita-Terasaka knot and the Conway knot, and can detect non-invertibility of knots (see examples in~\cite{E}).
\end{remark}

\begin{example}\label{trefoil_2ml_invariant}
Let $G=\Sigma_3$, $m=(12)$, $l=1$, and $K$ be the right trefoil. Calculations in GAP show that the  $\sigma_2(K;m,l)=2\cdot a$ where $a\in H_2(G,m,l)=\Z_3$ is a non-zero element. Moreover, $\sigma_2(\bar K;m,l)=2\cdot (-a)$, hence, the invariant distinguishes the left and the right trefoils. Note that $P^m_G(K)=P^m_G(\bar K)$, so in this case the $(m,l)$-invariant is stronger than the coloring polynomial.
\end{example}

\begin{example}\label{trefoil_3ml_invariant}
Let $G=A_4$, $m=(123)$, $l=1$, and $K$ be the right trefoil. Then $\sigma_3(K;m,l)=3\cdot a$ where $a\in H^-_3(G,m,l)=\Z_2^5\oplus\Z_4$ is an element of order $2$.
\end{example}

\begin{example}\label{trefoil_4ml_invariant}
Let $G=\Sigma_3$, $m=(12)$, $l=1$, and $K$ be the right trefoil. The standard trefoil diagram has three positive $4$-secants (Fig.~\ref{fig:trefoil_4secants}).
\begin{figure}[h]
    \centering
    \includegraphics[width=0.3\linewidth]{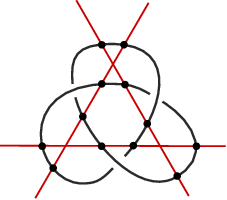}
    \caption{The right trefoil and its $4$-secants}
    \label{fig:trefoil_4secants}
\end{figure}

Then $\sigma_4(K;m,l)=2\cdot a$ where $a\in H^+_4(G,m,l)=\Z^7\oplus\Z_6^2$ is an element of order $3$.
\end{example}

\section*{Acknowledgements}

The author is grateful to Vassily Olegovich Manturov for fruitful discussions.


\end{document}